\documentclass[11pt]{article}

\usepackage[margin=1in]{geometry}
\usepackage{amsmath,amssymb,amsthm,mathtools,bm}
\usepackage{booktabs,array,tabularx}
\usepackage{multirow,makecell,threeparttable}
\usepackage{placeins}
\usepackage{microtype}
\usepackage{enumitem}
\usepackage[table]{xcolor}
\definecolor{LightGray}{gray}{0.92}
\usepackage{aliascnt}
\usepackage{tikz}
\PassOptionsToPackage{pagebackref}{hyperref}

\AtBeginDocument{%
  \renewcommand*{\backref}[1]{}%
  \renewcommand*{\backrefalt}[4]{%
    \ifcase #1\relax
    \or
      \space\textit{(Cited on page~#2.)}%
    \else
      \space\textit{(Cited on pages~#2.)}%
    \fi
  }%
}

\newtheorem{theorem}{Theorem}[section]
\newtheorem{lemma}[theorem]{Lemma}

\newtheorem{corollary}[theorem]{Corollary}
\newtheorem{remark}[theorem]{Remark}
\theoremstyle{definition}
\newtheorem{assumption}[theorem]{Assumption}

\newcommand{\R}{\mathbb{R}}
\newcommand{\E}{\mathbb{E}}

\newcommand{\norm}[1]{\left\lVert #1\right\rVert}
\newcommand{\ip}[2]{\left\langle #1,#2\right\rangle}
\newcommand{\op}{\mathrm{op}}
\newcommand{\RR}{\textsc{RR}}
\newcommand{\wtO}{\widetilde{\mathcal O}}

\newcommand{\abs}[1]{\left| #1\right|}
\usepackage[hidelinks]{hyperref}
\usepackage[nameinlink,capitalize,noabbrev]{cleveref}
\hypersetup{
  pdftitle={Centered Permutation Prefixes for SGD with Random Reshuffling: Sharp Rates, Holder Geometry, and Composite Proximal Extensions},
  pdfauthor={Jiaxiang Li}
}

\title{Centered Permutation Prefixes for SGD with Random Reshuffling:\\
Sharp Rates, H\"older Geometry, and Composite Proximal Extensions}
\author{Jiaxiang Li\thanks{Grado Department of Industrial and Systems Engineering, Virginia Tech.  \texttt{jasonljx@vt.edu}}}
\date{September 2026}

\begin{document}
\maketitle

\begin{abstract}

We study stochastic gradient descent with random reshuffling for finite sums
\[
F(x)=\frac1n\sum_{i=1}^n f_i(x).
\]
For fresh reshuffling with a constant component stepsize, if each $f_i$ has an $L$-Lipschitz gradient and the average $F$ is $\mu$-strongly convex with a Lipschitz-continuous Hessian, we prove the last-epoch rate
\[
\mathbb E[F(y_K)-F(x_\star)]
=\widetilde O\!\left(T^{-2}+n^2T^{-3}\right),
\qquad T=nK,
\]
matching the known quadratic lower bound in its $(n,K)$-dependence.
The components may be nonconvex, and no componentwise Hessian continuity or separate bounded-iterate assumption is required.
More generally, a $\nu$-H\"older-continuous average Hessian adds only
$\widetilde O(n^{1+\nu}T^{-2-2\nu})$, so every $\nu\ge 1/2$ preserves the quadratic rate.
Under convex components, a decreasing-stepsize result removes the large-epoch requirement and recovers the same two-term scale once $nK$ exceeds the condition-number scale.

We also analyze epoch-wise ProxRR for $\mathcal P=F+\psi$.
Writing $x^\dagger$ for the composite minimizer and
$\beta_\star=\|\nabla F(x^\dagger)\|$, we prove
\[
\mathbb E\|y_K-x^\dagger\|^2
=\widetilde O\!\left(
\frac{\beta_\star^2}{K^2}
+T^{-2}+n^2T^{-3}
+n^{1+\nu}T^{-2-2\nu}
\right).
\]
For $\nu\ge 1/2$, we show that the $\beta_\star^2/K^2$ splitting term is unavoidable and obtain a matching lower bound up to logarithms in the stated constant-stepsize regime.
\end{abstract}

\section{Introduction}
\label{sec:introduction}

SGD with random reshuffling (\RR) processes every component of a finite sum once per epoch in a uniformly random order and is a common implementation in modern machine-learning training.
It is often faster in practice than with-replacement SGD, but its analysis is more delicate because the label used at an inner iteration is statistically dependent on the iterates generated by the preceding labels in the same epoch.

For smooth strongly convex finite sums, earlier work established \(1/K^2\)-type convergence; see \cite{gurbuzbalaban2021,haochen2019,nagaraj2019,mishchenko2020,ahn2020,nguyen2021}.
The sharp dependence on the number \(n\) of components depends on second-order regularity.
Safran and Shamir \cite{safran2020} proved that constant-stepsize \RR, evaluated at the last epoch iterate, can incur
\begin{equation}
\Omega\!\left(\frac{1}{(nK)^2}+\frac{1}{nK^3}\right)
\label{eq:intro-quadratic-lb}
\end{equation}
even on one-dimensional quadratic finite sums.
Rajput, Gupta, and Papailiopoulos \cite{rajput2020} obtained a matching upper bound when the average objective is quadratic.
For non-quadratic objectives under componentwise Lipschitz-Hessian assumptions, the previously available second-order upper bound was \(\wtO(T^{-2}+n^3T^{-3})=\wtO((nK)^{-2}+K^{-3})\) \cite{haochen2019}; Ahn, Yun, and Sra \cite{ahn2020} explicitly recorded the remaining factor-\(n\) gap.
In this work, we close this gap by centering the partial sums associated with the first labels of the sampled permutation before applying a Taylor expansion.

\paragraph{Contributions.}
Our main contributions are as follows.
\begin{enumerate}[leftmargin=2.1em,label=(\roman*)]
    \item \textbf{Sharp strongly convex rate under an average Lipschitz Hessian.}
    Under component smoothness, strong convexity of the average, and Lipschitz continuity of the average Hessian, we prove
    \[
    \E\|y_K-x_\star\|^2
    =\wtO\!\left(T^{-2}+n^2T^{-3}\right).
    \]
    The components may be nonconvex, their Hessians need not be Lipschitz, and bounded iterates or gradients are not assumed separately.
    To the best of our knowledge, this is the first upper bound under these assumptions that matches the quadratic lower bound in \((n,K)\).

    \item \textbf{A H\"older-Hessian threshold at \(\nu=\tfrac12\).}
    Under \(\|\nabla^2F(x)-\nabla^2F(y)\|_{\op} \le\rho_\nu\|x-y\|^\nu\), the additional global term is \(\wtO(n^{1+\nu}T^{-2-2\nu})\).
    For every \(\nu\ge\tfrac12\), this term is dominated by \(n^2T^{-3}\).
    \Cref{fig:holder-threshold} visualizes the resulting upper-bound threshold in the \(K\)-decay exponent.

    \item \textbf{No large-epoch requirement under convex components.}
    Under component convexity and the standard bounded-iterates condition, we combine a per-update estimate of Ahn, Yun, and Sra~\cite{ahn2020} with our centered-prefix one-epoch bound.
    For any fixed \(\zeta>4\), a decreasing stepsize gives, for every \(K\ge1\),
    \[
    \E\|y_K-x_\star\|^2
    =O\!\left(
      \left(\frac{\kappa}{\kappa+nK}\right)^{\!\zeta}
      +(\kappa+nK)^{-2}
      +n^2(\kappa+nK)^{-3}
      +n^{1+\nu}(\kappa+nK)^{-2-2\nu}
    \right),
    \]
    with fixed problem parameters and the initial distance suppressed.
    For \(\nu\ge\tfrac12\) and \(nK\gtrsim\kappa\), this becomes \(O(T^{-2}+n^2T^{-3})\), even when only a few epochs are run.

    \item \textbf{A sharp composite ProxRR theorem.}
    For \(\mathcal P=F+\psi\), where \(\psi\) is proper, closed, and convex, and one proximal map is evaluated after each reshuffled pass, we prove
    \[
    \E\|y_K-x^\dagger\|^2
    =\wtO\!\left(
    \frac{\beta_\star^2}{K^2}+T^{-2}+n^2T^{-3}
    +n^{1+\nu}T^{-2-2\nu}\right).
    \]
    A zero-variance construction proves that the \(\beta_\star^2/K^2\) splitting term is unavoidable.
    For \(\nu\ge\tfrac12\), a direct-sum lower bound matches all three leading terms up to logarithms once the Safran--Shamir lower-bound truncation is inactive.
\end{enumerate}

\paragraph{Comparison at a glance.}
Table~\ref{tab:rates} compares the standard with-replacement SGD benchmark with the random-reshuffling upper and lower bounds most directly related to our main theorem.
As in Ahn, Yun, and Sra~\cite{ahn2020}, the objective classes are ordered from more general to more restrictive.
The composite results use a different error measure and are summarized separately in Table~\ref{tab:prox-rates}.

\begin{table}[!t]
\caption{\small
Comparison of with-replacement SGD and random reshuffling for smooth strongly convex finite sums.
All rates are for expected objective suboptimality.
The with-replacement SGD row applies to all three nested objective classes.
We set \(T=nK\), \(\kappa=L/\mu\), and suppress logarithms, fixed problem parameters, and initial-distance factors.
Here \RR{}, (BI), (B2), and (LB) denote random reshuffling, bounded iterates, a bounded stochastic-gradient second moment, and a lower bound, respectively.
}
\centering
\begin{threeparttable}
\setlength{\tabcolsep}{2.4pt}
\renewcommand{\arraystretch}{1.15}
\scriptsize
\begin{tabularx}{\textwidth}{
|>{\raggedright\arraybackslash}p{0.14\textwidth}
|>{\raggedright\arraybackslash}p{0.145\textwidth}
|>{\raggedright\arraybackslash}p{0.225\textwidth}
|>{\centering\arraybackslash}X
|>{\centering\arraybackslash}p{0.175\textwidth}|}
\hline
Objective class & Components & Sampling and reference & Convergence rate & Scope / assumptions\\
\hline\hline

\multirow{3}{*}{\makecell[l]{\(F\) strongly\\convex}}
& \(f_i\) smooth
& With-replacement SGD: Rakhlin et al.~\cite{rakhlin2012}\tnote{$\clubsuit$}
& \(O(T^{-1})\)
& last iterate; \(\eta_t\asymp t^{-1}\); (B2)\\
& \(f_i\) smooth
& \RR{}: Ahn et al.~\cite{ahn2020}\tnote{$\dagger$}
& \(\widetilde O(nT^{-2})\)
& \(K\gtrsim\kappa\)\\
& \(f_i\) smooth convex
& \RR{}: Rajput et al.~\cite{rajput2020}
& \(\Omega(nT^{-2})\) (LB)
& constant stepsize\\
\hline\hline

\multirow{3}{*}{\makecell[l]{\(F\) strongly\\convex; average\\\(\nu\)-H\"older Hessian}}
& \(f_i\) smooth convex
& \RR{}: HaoChen and Sra~\cite{haochen2019}\tnote{$\ddagger$}
& \(\widetilde O(T^{-2}+n^3T^{-3})\)
& \(\nu=1\), \(K\gtrsim\kappa^2\), (BI)\\
& \(f_i\) smooth
& \cellcolor{LightGray}\textbf{\RR{}: ours} (Cor.~\ref{cor:holder-sc-rate})
& \cellcolor{LightGray}\(\widetilde O(T^{-2}+n^2T^{-3})\)
& \cellcolor{LightGray}\(\nu\ge\tfrac12\), \(K\gtrsim\kappa^2\)\\
& \(f_i\) smooth convex
& \cellcolor{LightGray}\textbf{\RR{}: ours} (Cor.~\ref{cor:all-k-holder-rate})
& \cellcolor{LightGray}\makecell{\(O((\kappa/(\kappa+nK))^\zeta\)\\
\({}+(\kappa+nK)^{-2}\)\\
\({}+n^2(\kappa+nK)^{-3})\)}
& \cellcolor{LightGray}\(\nu\ge\tfrac12\), \(K\ge1\), (BI), varying steps\\
\hline\hline

\multirow{3}{*}{\makecell[l]{\(F\) strongly\\convex quadratic}}
& \(f_i\) smooth convex
& \RR{}: Rajput et al.~\cite{rajput2020}\tnote{*}
& \(\widetilde O(T^{-2}+n^2T^{-3})\)
& \(K\gtrsim\kappa^2\), (BI)\\
& \(f_i\) smooth convex
& \RR{}: Ahn et al.~\cite{ahn2020}\tnote{*}
& \(O(T^{-2}+n^2T^{-3})\)
& \(K\ge1\), (BI), varying steps\\
& \(f_i\) smooth quadratic convex
& \RR{}: Safran and Shamir~\cite{safran2020}
& \(\Omega(T^{-2}+n^2T^{-3})\) (LB)
& constant stepsize\\
\hline
\end{tabularx}
\begin{tablenotes}[flushleft]
\scriptsize
\item[$\clubsuit$] Rakhlin, Shamir, and Sridharan prove the optimal generic \(O(T^{-1})\) last-iterate rate for smooth strongly convex stochastic optimization under (B2).
The same generic upper bound applies to the nested H\"older-Hessian and quadratic classes; regularity of the average alone does not force the sampling variance at the optimum to vanish.
\item[$\dagger$] Ahn et al. state the general result for the best epoch iterate.
Under their bounded-iterates condition, the same order holds for the last epoch iterate.
\item[$\ddagger$] HaoChen and Sra additionally assume that every component is convex and has a Lipschitz-continuous Hessian.
\item[*] The quadratic-average upper bounds of Rajput et al. and Ahn et al. do not require the individual components \(f_i\) to be quadratic.
\end{tablenotes}
\end{threeparttable}
\label{tab:rates}
\end{table}
\FloatBarrier

\section{Related work}
\label{sec:related}

\paragraph{With-replacement benchmark.}
With independent uniform sampling, standard SGD for smooth strongly convex stochastic optimization attains the optimal generic \(O(T^{-1})\) expected objective rate for its last iterate with a \(t^{-1}\)-scale stepsize, under a bounded second-moment condition on the stochastic gradients~\cite{rakhlin2012}.
This is the conventional with-replacement baseline in the random-reshuffling literature \cite{haochen2019,rajput2020,ahn2020}.
The same upper benchmark applies to the nested classes in which the average has a H\"older-continuous Hessian or is quadratic: regularity of the average alone does not force the component-gradient variance at the optimum to vanish.

\paragraph{Finite-epoch theory and lower bounds.}
G\"urb\"uzbalaban, Ozdaglar, and Parrilo~\cite{gurbuzbalaban2021} gave foundational asymptotic explanations for reshuffling.
HaoChen and Sra~\cite{haochen2019} obtained the first nonasymptotic acceleration result under second-order assumptions, with rate \(\wtO(T^{-2}+n^3T^{-3})\).
Nagaraj, Jain, and Netrapalli~\cite{nagaraj2019}, Mishchenko, Khaled, and Richt\'arik~\cite{mishchenko2020}, Ahn, Yun, and Sra~\cite{ahn2020}, and Nguyen et al.~\cite{nguyen2021} broadened the assumptions and sharpened the rates.
Ahn et al. also showed that component convexity permits varying stepsizes that remove the lower bound on the number of epochs; their sharp two-term all-epoch theorem assumes that the average objective is quadratic.
Our all-epoch theorem uses their per-update estimate during the initial, larger-stepsize phase and then uses our centered-prefix one-epoch bound once the epoch stepsize enters its stable range.
The algorithm follows one prescribed decreasing schedule throughout; only the inequality used in the proof changes.
This replaces quadraticity of the average by H\"older continuity of its Hessian.
Liu~\cite{liu2026} subsequently established finite-epoch dominance of random reshuffling over with-replacement SGD in smooth convex optimization under broad stepsizes; that comparison theorem does not state the strongly convex last-iterate two-term rate considered here.
Safran and Shamir~\cite{safran2020} established the quadratic lower bound \eqref{eq:intro-quadratic-lb}; Rajput et al.~\cite{rajput2020} matched it for a multidimensional quadratic average and proved the stronger \(\Omega(nT^{-2})\) obstruction for the merely smooth class.
Cha, Lee, and Yun~\cite{cha2023} further refined lower bounds in the condition number and heterogeneity parameters.

\paragraph{Composite and nonsmooth models.}
Mishchenko, Khaled, and Richt\'arik~\cite{mishchenko2022} introduced ProxRR and FedRR, applying one common proximal map after each epoch.
Liu and Zhou~\cite{liu2024} obtained raw last-iterate objective bounds for the same epoch-end proximal template under convex smooth components.
Our composite proof uses nonexpansiveness to compare the post-epoch iterate with the optimal proximal fixed point; the centered-prefix expansion is then applied only to the smooth pre-proximal error.
This yields sharper reshuffling terms under average H\"older curvature and permits nonconvex components.

Different algorithms are needed when the component losses themselves are nonsmooth or when a proximal step is applied after every component.
Qiu, Li, and Milzarek~\cite{qiu2025normprr} analyze a normal-map proximal reshuffling method for nonsmooth nonconvex finite sums.
Liu and Zhou~\cite{liu2025nonsmooth} obtain improved last-iterate guarantees for nonsmooth convex shuffling, and Josz, Lai, and Li~\cite{josz2024proxrr} study locally Lipschitz summands through trajectory tracking.
These results are complementary to the epoch-wise smooth-component theorem in \cref{sec:proximal}.

\paragraph{Distributed methods, variational inequalities, and designed orderings.}
Huang, Zhou, and Pu~\cite{huang2023distributed} analyze gradient-tracking and exact-diffusion reshuffling with explicit spectral-gap dependence.
Emmanouilidis, Vidal, and Loizou~\cite{emmanouilidis2024} study stochastic extragradient with reshuffling for variational inequalities, while Chae, Yun, and Kim~\cite{chae2024} show that anchoring can be necessary in convex-concave problems.
GraB~\cite{lu2022grab} actively balances prefixes, and block-reshuffling and reversal schemes~\cite{nguyen2026block} reduce prefix-variance or order-dependent terms.
The abstract estimate \eqref{eq:abstract-transfer} explains how such moment improvements affect curvature bias, but not network-disagreement or operator-specific terms.

\paragraph{Organization.}
\Cref{sec:main-results} states the algorithm and the strongly convex results in the merely smooth and Lipschitz-Hessian regimes.
\Cref{sec:common-estimates} develops the common prefix, path, and conditional-mean estimates.
\Cref{sec:extensions} gives the H\"older-Hessian and all-epoch extensions, and \cref{sec:proximal} gives the composite ProxRR upper and lower bounds.
\Cref{sec:discussion} records the precise scope and limitations of each result.
Proofs are collected in Appendices~\ref{app:baseline-proofs}, \ref{app:extension-proofs}, and~\ref{app:proximal-proofs}.

\section{Problem formulation and main results}
\label{sec:main-results}

\subsection{Algorithm and assumptions}

Consider
\begin{equation}
F(x)=\frac{1}{n}\sum_{i=1}^n f_i(x),
\qquad x\in\R^d,
\label{eq:finite-sum}
\end{equation}
where \(n\ge2\).
At epoch \(k\in\{0,\ldots,K-1\}\), Random Reshuffling (\RR{}) draws an independent uniform permutation \(\pi_k\) of \([n]:=\{1,\ldots,n\}\) and performs
\begin{equation}
x_{k,i}=x_{k,i-1}-\eta\nabla f_{\pi_k(i)}(x_{k,i-1}),
\qquad i=1,\ldots,n.
\label{eq:rr-update}
\end{equation}
Set
\begin{equation}
y_k:=x_{k,0},
\qquad
y_{k+1}:=x_{k,n},
\qquad
T:=nK,
\qquad
h:=n\eta.
\label{eq:epoch-notation}
\end{equation}
Thus \(y_K\) is the last epoch iterate and \(h\) is the effective epoch stepsize.

We impose the following assumptions for \Cref{sec:main-results}.

\begin{assumption}[Component smoothness]\label{ass:smooth}
Every \(f_i:\R^d\to\R\) is differentiable and has an \(L\)-Lipschitz gradient:
\[
\norm{\nabla f_i(x)-\nabla f_i(y)}\le L\norm{x-y}
\qquad\text{for all }x,y\in\R^d.
\]
The components need not be convex.
\end{assumption}

\begin{assumption}[Strong convexity of the average]\label{ass:sc}
The average \(F\) is \(\mu\)-strongly convex, with unique minimizer \(x_\star\).
Hence \(\nabla F(x_\star)=0\) and \(0<\mu\le L\).
\end{assumption}

The stronger regime additionally uses the following condition.

\begin{assumption}[Lipschitz Hessian of the average]\label{ass:hess}
The function \(F\) is twice differentiable and
\[
\norm{\nabla^2F(x)-\nabla^2F(y)}_{\op}
\le \rho\norm{x-y}
\qquad\text{for all }x,y\in\R^d.
\]
No corresponding condition is imposed on the individual component Hessians.
\end{assumption}

Define
\begin{equation}
G_\star:=\max_{i\in[n]}\norm{\nabla f_i(x_\star)},
\qquad
D:=\max\left\{\norm{y_0-x_\star},\frac{G_\star}{2L}\right\},
\qquad
G:=G_\star+2LD.
\label{eq:DG}
\end{equation}
When \(D=0\), every component gradient vanishes at \(y_0=x_\star\), and the algorithm remains at the optimum.
We therefore focus on \(D>0\).
Observe that
\begin{equation}
G_\star\le2LD,
\qquad
G\le4LD.
\label{eq:G-4LD}
\end{equation}

The common within-epoch estimates only require the dimensionless path-stability condition
\begin{equation}
hL=n\eta L\le\frac1{32}.
\label{eq:path-stability}
\end{equation}
For the strongly convex distance recursions, we use the stronger condition
\begin{equation}
h=n\eta\le\frac{\mu}{32L^2},
\label{eq:stepsize}
\end{equation}
which implies \eqref{eq:path-stability} because \(\mu\le L\).

\paragraph{What is a permutation prefix?}
Fix one epoch and assume that its starting point is \(x\).
Write
\[
g_i:=\nabla f_i(x),
\qquad
g:=\frac1n\sum_{i=1}^n g_i=\nabla F(x).
\]
For \(m\in\{0,\ldots,n\}\), the first \(m\) labels of the sampled permutation form the prefix evaluated at the epoch start
\[
P_m:=\sum_{j=1}^m g_{\pi(j)}.
\]
A direct path-wise estimate uses \(\|P_m\|\le m\max_i\|g_i\|\), so summing \(\|P_m\|^2\) over an epoch costs order \(n^3\).
This scaling is consistent with the \(n^3T^{-3}\) term in the earlier second-order analysis of HaoChen and Sra \cite{haochen2019}.
We instead decompose
\begin{equation}
P_m=mg+Z_m,
\qquad
Z_m:=\sum_{j=1}^m(g_{\pi(j)}-g),
\qquad
\E Z_m=0.
\label{eq:intro-center}
\end{equation}
The term \(mg\) is the sum that would be obtained by replacing every selected component gradient by the average gradient \(g\).
For the remaining zero-mean term, sampling without replacement gives the exact identity
\[
\sum_{m=0}^{n-1}\E\|Z_m\|^2=\frac{n(n+1)}6V,
\qquad
V:=\frac1n\sum_{i=1}^n\|g_i-g\|^2,
\]
which is order \(n^2\), not \(n^3\).

The calculation above evaluates every component gradient at the starting point \(x\), whereas the algorithm does not.
Two comparisons are therefore needed.
First, the component at position \(m+1\) is chosen from the labels not used in the first \(m\) positions.
Its label is consequently dependent on \(P_m\), and \(\nabla f_{\pi(m+1)}\) cannot be replaced by \(\nabla F\) without an error term.
Lemma \ref{lem:coupling} bounds that error by \(\eta L\sqrt{2V}\).
Second, the algorithm evaluates \(\nabla f_{\pi(i)}\) at the actual point \(x_{i-1}\).
For comparison, we introduce \(\widetilde x_{i-1}\), obtained from the same preceding labels but with every preceding gradient evaluated at \(x\).
Lemma \ref{lem:B} bounds the difference between evaluating the current gradient at \(x_{i-1}\) and at \(\widetilde x_{i-1}\).

The same calculation also explains the H\"older-Hessian extension.
At the deterministic point \(a_m=x-\eta mg\), the reference iterate is \(a_m-\eta Z_m\).
If the average Hessian is \(\nu\)-H\"older continuous, then
\begin{equation}
\left\|\E\nabla F(a_m-\eta Z_m)-\nabla F(a_m)\right\|
\le
\frac{\rho_\nu\eta^{1+\nu}}{1+\nu}
\,\E\|Z_m\|^{1+\nu}.
\label{eq:intro-holder-bias}
\end{equation}
The first-order Taylor term has zero expectation because \(\E Z_m=0\).
The strongly convex extension combines this bias estimate with squared-distance contraction, while the ProxRR extension first uses nonexpansiveness of the proximal map and then applies the same smooth one-epoch analysis.

\subsection{Main theorems}

\begin{theorem}[Lipschitz-Hessian finite-horizon bound]\label{thm:lh-main}
Suppose Assumptions~\ref{ass:smooth}, \ref{ass:sc}, and~\ref{ass:hess} hold and \eqref{eq:stepsize} is satisfied.
Then, for every \(K\ge1\),
\begin{align}
\E\norm{y_K-x_\star}^2
\le{}&
\exp\!\left(-\frac{\mu\eta T}{2}\right)D^2
+\frac{64L^2G^2}{\mu^2}\eta^2
+\frac{6L^2G^2}{\mu}n^2\eta^3
+\frac{8\rho^2G^4}{9\mu^2}n^2\eta^4.
\label{eq:lh-global}
\end{align}
\end{theorem}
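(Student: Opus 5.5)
We will prove \eqref{eq:lh-global} through a one-epoch recursion for \(r_k:=\E\norm{y_k-x_\star}^2\), then unroll it over \(K\) epochs. Fix an epoch with starting point \(x=y_k\) and write \(y_{k+1}=x-\eta\sum_{i=1}^n\nabla f_{\pi(i)}(x_{i-1})=x-h\nabla F(x)-\eta E\), where \(E\) is the epoch error
\[
E:=\sum_{i=1}^n\bigl(\nabla f_{\pi(i)}(x_{i-1})-\nabla f_{\pi(i)}(x)\bigr).
\]
Expanding the square gives
\[
\norm{y_{k+1}-x_\star}^2=\norm{x-h\nabla F(x)-x_\star}^2-2\eta\ip{x-h\nabla F(x)-x_\star}{\E E}+\eta^2\E\norm{E}^2 .
\]
Under \eqref{eq:stepsize}, the deterministic step satisfies \(\norm{x-h\nabla F(x)-x_\star}^2\le(1-\mu h)\norm{x-x_\star}^2\) by strong convexity and \(L\)-smoothness. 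The cross term is bounded by Young's inequality: \(2\eta\norm{x-x_\star}\norm{\E E}\le\frac{\mu h}{2}\norm{x-x_\star}^2+\frac{2\eta^2}{\mu h}\norm{\E E}^2\). This gives a one-epoch contraction factor \(1-\mu h/2\), which produces \(\exp(-\mu\eta T/2)D^2\) after unrolling.

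Step one bounds the second moment. By Lemma~\ref{lem:B} and the path estimates of \Cref{sec:common-estimates}, \(\norm{x_{i-1}-x}\lesssim\eta\norm{P_{i-1}}+\) higher order. Hence \(\E\norm{E}^2\lesssim L^2\eta^2 n\sum_m\E\norm{P_m}^2\lesssim L^2\eta^2 n(n^3\norm{g}^2+n^2V)\). The \(n^3\norm{g}^2\) part is absorbed into the contraction using \(\norm{g}\le L\norm{x-x_\star}\) and \(hL\le 1/32\). The variance part contributes \(L^2G^2n^3\eta^4\), which after division by \(\mu h\) matches or is dominated by the \(n^2\eta^3\) term.

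Step two bounds the bias \(\E E\), which is the heart of the argument. Decompose each summand by three comparisons. First, replace the actual iterate \(x_{i-1}\) by the reference iterate \(\widetilde x_{i-1}=x-\eta P_{i-1}\); Lemma~\ref{lem:B} controls this at order \(L\cdot\eta^2 L\,\E\norm{P}\)-type quantities, giving the \(L^2G^2\) terms. Second, replace \(\nabla f_{\pi(i)}\) by \(\nabla F\) at \(\widetilde x_{i-1}\) despite the dependence on the prefix; Lemma~\ref{lem:coupling} bounds this by \(\eta L\sqrt{2V}\) per step, so the total is \(\lesssim n\eta LG\). Third, the remaining term is \(\sum_m(\E\nabla F(a_m-\eta Z_m)-\nabla F(x))\). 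Split it into \(\nabla F(a_m)-\nabla F(x)\), which is \(O(h)\)-small and proportional to \(\norm{g}\) and is absorbed into the contraction, plus the curvature bias \eqref{eq:intro-holder-bias} with \(\nu=1\). The bias is at most \(\frac{\rho\eta^2}{2}\sum_m\E\norm{Z_m}^2=\frac{\rho\eta^2 n(n+1)}{12}V\le\frac{\rho\eta^2n^2G^2}{6}\). Squaring, multiplying by \(\frac{2\eta^2}{\mu h}\), and dividing by the contraction rate \(\mu h/2\) yields a term \(\propto\rho^2G^4n^2\eta^4/\mu^2\), which is the final term. Similarly, the \(n\eta LG\) coupling bias yields \(L^2G^2\eta^2/\mu^2\), and the path terms yield \(L^2G^2n^2\eta^3/\mu\).

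The main obstacle is that \(G\), \(V\), and the path bounds must hold at arbitrary iterates without assuming bounded iterates. We would handle this by induction: under \eqref{eq:stepsize}, show \(\norm{y_k-x_\star}\le 2D\) almost surely. This follows from a deterministic epoch bound \(\norm{y_{k+1}-x_\star}\le(1-\mu h/4)\norm{y_k-x_\star}+O(h^2LG_\star)\) together with the definition of \(D\), and it justifies \(\max_i\norm{\nabla f_i(y_k)}\le G_\star+2LD=G\) and \(V\le G^2\). Finally, unroll \(r_{k+1}\le(1-\mu h/2)r_k+c\), where \(c\) is the sum of the per-epoch error terms. Using \(\sum_k(1-\mu h/2)^k\le 2/(\mu h)\) and \((1-\mu h/2)^K\le\exp(-\mu\eta T/2)\) gives \eqref{eq:lh-global}, with constants tracked to match \(64,6,8/9\).
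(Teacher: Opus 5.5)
Your proposal is correct and follows the paper's proof essentially step for step. It uses the same split of the epoch error into $R^{\mathrm{ref}}+R^{\mathrm{eval}}$ and the same decomposition $\E R^{\mathrm{ref}}=b+r_{\mathrm{lab}}+r_{\mathrm{curv}}$ from Lemma~\ref{lem:mean-A}, controlled via Lemmas~\ref{lem:path-energy}, \ref{lem:B}, \ref{lem:coupling}, and~\ref{lem:prefix}. It then uses deterministic confinement to get $V\le G^2$ at every epoch start (your weaker bound $\norm{y_k-x_\star}\le 2D$ already suffices) and unrolls geometrically with factor $2/(\mu h)$.

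There is one bookkeeping slip. Young's inequality with parameter $\mu h/2$ spends the entire margin in $(1-\mu h)\norm{e}^2$, so the $\norm{g}^2$ pieces can no longer be absorbed while keeping the factor $1-\mu h/2$. Also, absorbing them through $\norm{g}\le L\norm{e}$ needs $h\le\mu/(32L^2)$, not just $hL\le 1/32$. There are two fixes. You can take parameter $\mu h/4$ instead; after the fourfold split this equals the paper's $\mu h/16$ per bias piece and reproduces the constants $64$ and $8/9$. Alternatively, keep the negative term $-h(1/L-h)\norm{g}^2$ supplied by~\eqref{eq:interpolation}, as the paper does.
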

\paragraph{Interpretation.}
The four terms have different origins.
The exponential term is the contracted initial error.
The \(\eta^2\) term comes from the fact that the component used at a position is not independent of the preceding indices.
The \(n^2\eta^3\) term is the accumulated effect of evaluating component gradients at the changing inner iterates.
The final term is the nonlinear Taylor remainder controlled by the Lipschitz Hessian of the average objective.
See \Cref{sec:common-estimates} and Appendix~\ref{app:baseline-proofs-Lip-Hessian} for the proof.


\begin{corollary}[Optimal \((n,K)\)-rate with a Lipschitz Hessian]\label{cor:lh-rate}
Let \(T=nK\ge3\), choose
\begin{equation}
\eta=\frac{4\log T}{\mu T},
\label{eq:horizon-eta}
\end{equation}
and suppose
\begin{equation}
K\ge128\frac{L^2}{\mu^2}\log T.
\label{eq:epoch-condition}
\end{equation}
Then
\begin{align}
\E\norm{y_K-x_\star}^2
\le{}&
\frac{D^2}{T^2}
+\frac{1024L^2G^2\log^2T}{\mu^4T^2}
+\frac{384L^2G^2n^2\log^3T}{\mu^4T^3}
+\frac{2048\rho^2G^4n^2\log^4T}{9\mu^6T^4}.
\label{eq:lh-cor-explicit}
\end{align}
Consequently,
\begin{equation}
\E\bigl[F(y_K)-F(x_\star)\bigr]
=
\wtO\!\left(\frac{1}{(nK)^2}+\frac{1}{nK^3}\right),
\label{eq:lh-rate}
\end{equation}
where the asymptotic notation suppresses fixed problem parameters and logarithms.
\end{corollary}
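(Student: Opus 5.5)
The plan is to substitute the stepsize \eqref{eq:horizon-eta} into \Cref{thm:lh-main}, read off the four terms of \eqref{eq:lh-cor-explicit}, and then pass from distances to function values using smoothness. The only hypothesis of \Cref{thm:lh-main} that must be checked is \eqref{eq:stepsize}. With \(\eta=4\log T/(\mu T)\) we have \(h=n\eta=4\log T/(\mu K)\). The epoch condition \eqref{eq:epoch-condition} gives \(K\ge 128L^2\log T/\mu^2\), so
\[
h\le \frac{4\log T\,\mu^2}{128\mu L^2\log T}=\frac{\mu}{32L^2},
\]
which is exactly \eqref{eq:stepsize}. Here \(T\ge3\) ensures \(\log T>0\), so \(\eta>0\). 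This step is where the large-epoch requirement enters, and it is the only place it is used.

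Next I substitute term by term into \eqref{eq:lh-global}. For the exponential term, \(\mu\eta T/2=2\log T\), so \(\exp(-\mu\eta T/2)D^2=D^2/T^2\). For the \(\eta^2\) term, \(\eta^2=16\log^2T/(\mu^2T^2)\), so \(64L^2G^2\eta^2/\mu^2=1024L^2G^2\log^2T/(\mu^4T^2)\). For the \(n^2\eta^3\) term, \(\eta^3=64\log^3T/(\mu^3T^3)\), which gives \(6\cdot64=384\) and the third term \(384L^2G^2n^2\log^3T/(\mu^4T^3)\). For the Hessian term, \(\eta^4=256\log^4T/(\mu^4T^4)\), so the coefficient is \(8\cdot256/9=2048/9\) and the term is \(2048\rho^2G^4n^2\log^4T/(9\mu^6T^4)\). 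These match \eqref{eq:lh-cor-explicit} exactly.

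For the rate \eqref{eq:lh-rate}, I use \(\nabla F(x_\star)=0\) and the \(L\)-smoothness of \(F\), which is inherited from \Cref{ass:smooth} by averaging. This gives \(F(y_K)-F(x_\star)\le \frac L2\|y_K-x_\star\|^2\), and taking expectations transfers \eqref{eq:lh-cor-explicit} to function values. With problem parameters and logarithms suppressed, the terms are of order \(T^{-2}\), \(T^{-2}\), \(n^2T^{-3}\) and \(n^2T^{-4}\). The last is at most \(n^2T^{-3}\) because \(T\ge1\). Finally, \(n^2T^{-3}=n^2/(n^3K^3)=1/(nK^3)\).

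There is no real obstacle here, since all of the difficulty lives in \Cref{thm:lh-main}. The only care points are the arithmetic of the constants and confirming that the epoch condition implies \eqref{eq:stepsize}.
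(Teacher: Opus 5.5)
Your proposal is correct and matches the paper's proof: check that \eqref{eq:epoch-condition} yields \eqref{eq:stepsize}, substitute \eqref{eq:horizon-eta} into \eqref{eq:lh-global}, and pass to function values via the smoothness upper bound in \eqref{eq:function-distance}. Your constant arithmetic ($1024$, $384$, $2048/9$) is right, and so is the observation that the curvature term is of higher order than $n^2T^{-3}$.
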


\begin{theorem}[Merely smooth finite-horizon bound]\label{thm:smooth-main}
Suppose only Assumptions~\ref{ass:smooth} and~\ref{ass:sc} hold and \eqref{eq:stepsize} is satisfied.
Then, for every \(K\ge1\),
\begin{align}
\E\norm{y_K-x_\star}^2
\le{}&
\exp\!\left(-\frac{\mu\eta T}{2}\right)D^2
+\frac{16L^2G^2}{\mu^2}n\eta^2
+\frac{4L^2G^2}{\mu}n^2\eta^3.
\label{eq:smooth-global}
\end{align}
\end{theorem}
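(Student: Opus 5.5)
We prove \Cref{thm:smooth-main} with a one-epoch recursion for the squared distance, unrolled over \(K\) epochs. Without a Hessian condition, the mean drift of an epoch cannot be Taylor-expanded around the deterministic path. We therefore compare the epoch update with a full gradient step and bound the error in mean square.

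\textbf{Epoch decomposition.} Fix an epoch starting at \(x=y_k\) and write
\[
y_{k+1}=x-\eta\sum_{i=1}^n\nabla f_{\pi(i)}(x_{i-1})
= x-h\nabla F(x)-\eta\sum_{i=1}^n\bigl(\nabla f_{\pi(i)}(x_{i-1})-\nabla f_{\pi(i)}(x)\bigr)
=:x-h\nabla F(x)-\eta E .
\]
The full step is a contraction under \eqref{eq:stepsize}. By \(\mu\)-strong convexity and \(L\)-smoothness of \(F\),
\[
\|x-h\nabla F(x)-x_\star\|^2\le(1-h\mu)\|x-x_\star\|^2 .
\]
For the error we use \(\|a+b\|^2\le(1+\tfrac{h\mu}{2})\|a\|^2+(1+\tfrac{2}{h\mu})\|b\|^2\). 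This gives
\[
\E\|y_{k+1}-x_\star\|^2\le\Bigl(1-\tfrac{h\mu}{2}\Bigr)\E\|y_k-x_\star\|^2+\tfrac{3}{h\mu}\,\eta^2\,\E\|E\|^2 .
\]

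\textbf{Bounding \(E\).} By \(L\)-smoothness,
\[
\|E\|\le L\sum_i\|x_{i-1}-x\|,
\qquad\text{hence}\qquad
\|E\|^2\le nL^2\sum_i\|x_{i-1}-x\|^2 .
\]
Under \eqref{eq:path-stability}, a Grönwall-type argument over the inner iterates gives \(\|x_{i-1}-x\|\le\eta\|P_{i-1}\|+\) a small multiple of it, i.e.
\[
\|x_{i-1}-x\|\le 2\eta\|P_{i-1}\|
\]
(this is the common path estimate of \Cref{sec:common-estimates}). Next we insert the centered decomposition \eqref{eq:intro-center}, \(P_m=mg+Z_m\), and the identity \(\sum_{m<n}\E\|Z_m\|^2=\tfrac{n(n+1)}6V\). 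This yields
\[
\sum_i\E\|x_{i-1}-x\|^2\lesssim\eta^2\bigl(n^3\|\nabla F(x)\|^2+n^2V\bigr).
\]
We then use \(\|\nabla F(x)\|\le L\|x-x_\star\|\) and the bound \(V\le G^2\). The latter requires the iterates to stay within distance \(O(D)\) of \(x_\star\), which should follow by induction from the same recursion together with the definition of \(D\) and \eqref{eq:G-4LD}. The result is
\[
\tfrac{3}{h\mu}\eta^2\E\|E\|^2\lesssim\frac{L^2}{\mu}\Bigl(n^3\eta^3L^2\,\E\|x-x_\star\|^2+n^2\eta^3G^2\Bigr).
\]
The first piece is at most a \(\tfrac{h\mu}{4}\) fraction of the distance, because \(h^2L^4/\mu\le h\mu/32^2\) by \eqref{eq:stepsize}; so it is absorbed into the contraction, which weakens to \(1-\tfrac{h\mu}{4}\). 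Dividing the second piece by the contraction rate \(h\mu/4\) turns \(n^2\eta^3\) into \(n\eta^2\). This produces the term \(\tfrac{16L^2G^2}{\mu^2}n\eta^2\).

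\textbf{The \(n^2\eta^3\) term and the main obstacle.} The remaining stated term \(\tfrac{4L^2G^2}{\mu}n^2\eta^3\) should come from a finer treatment of the cross term instead of the crude Young split. Concretely, we would write
\[
\E\ip{x-h\nabla F(x)-x_\star}{\eta E}
\]
and bound it using \(\E Z_m=0\) together with the coupling bound of Lemma~\ref{lem:coupling}. Doing so keeps part of the error at order \(n^2\eta^3\) without dividing by \(h\mu\), and leaves only the genuinely non-centerable part for the Young split.

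Arranging the constants so that the contraction factor is exactly \(\exp(-\mu\eta T/2)\) per horizon, and controlling \(V\) along the random path without a separate bounded-iterates assumption, is the step I expect to be most delicate. I would first prove
\[
\E\|y_k-x_\star\|^2\le D^2
\]
for all \(k\) from the recursion itself, and use that to bound the moments of \(V\).

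Finally, unrolling the recursion, using \(1-\tfrac{h\mu}{2}\le e^{-h\mu/2}\), and summing the geometric series gives \eqref{eq:smooth-global}.
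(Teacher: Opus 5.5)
Your core argument works and takes a different route from the paper. However, the obstacle you identify is not real, and the proposal contains three errors that need repair.

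\textbf{The \(n^2\eta^3\) term is not an obstacle.} A single Young split of \(\norm{(e-hg)-\eta R}^2\), with weight \(1/(\mu h)\) on \(\eta^2\E\norm{R}^2\), combined with Lemma~\ref{lem:path-energy}, already gives \(e^{-\mu\eta T/2}D^2+CL^2G^2n\eta^2/\mu^2\) once repair (ii) below is made. Since \(\frac{4L^2G^2}{\mu}n^2\eta^3\ge0\), this implies \eqref{eq:smooth-global}; nothing remains to be produced.

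In the paper, the \(n^2\eta^3\) term appears only because the exact expansion \eqref{eq:exact-endpoint-expansion} keeps \(2\eta^2\E\norm R^2\le2L^2h^4\norm g^2+2L^2n^3\eta^4V\) outside any Young step, so it is never divided by \(\mu h\). The \(n\eta^2\) term comes from \(2\eta\norm e\norm{\E R}\), via Jensen (Lemma~\ref{lem:smooth-mean}) and Young with weight \(\mu h/8\).

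Your planned refinement through \(\E Z_m=0\) and Lemma~\ref{lem:coupling} would fail in this class anyway. Without Hessian continuity there is no linear Taylor term to cancel. The bound \(\norm{\E\nabla F(a_m-\eta Z_m)-\nabla F(a_m)}\le\eta L\E\norm{Z_m}\) sums to the same order \(\eta Ln^{3/2}\sqrt V\) as the Jensen bound.

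\textbf{Errors to repair.}

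(i) The pathwise bound \(\norm{x_{i-1}-x}\le2\eta\norm{P_{i-1}}\) is false. For \(n\ge3\) and \(g_{\pi(1)}=-g_{\pi(2)}\ne0\) one has \(P_2=0\), yet \(x_2-x=-\eta[\nabla f_{\pi(2)}(x_1)-g_{\pi(2)}]\) is generally nonzero. Gr\"onwall bounds \(\norm{d_m}\) by \(\eta\norm{P_m}\) plus a multiple of the \emph{earlier} prefixes \(\norm{P_j}\), \(j<m\), not of \(\norm{P_m}\). What is true is the summed, averaged estimate \(\sum_{m<n}\E\norm{d_m}^2\le\eta^2(n^3\norm g^2+n^2V)\) of Lemma~\ref{lem:path-energy}. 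It is proved through the recursion for \(\mathcal E_m\) and holds conditionally on any epoch start.

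(ii) Young weight \(\mu h/2\) leaves a margin of only \(\mu^2h^2/2\), which in general is too small to absorb the state term of order \(L^4h^3/\mu\). Your weakened factor \(1-\mu h/4\) then gives \(e^{-\mu\eta T/4}\), not the stated exponent; your last line silently switches back to \(1-\mu h/2\). Use \(\norm{a+b}^2\le(1+\frac{\mu h}4)\norm a^2+\frac5{\mu h}\norm b^2\) instead. Then the coefficient of \(\norm e^2\) is at most \(1-\frac{3\mu h}4+\frac{5L^4h^3}{\mu}\le1-\frac{\mu h}2\), because \eqref{eq:stepsize} gives \(L^4h^2\le\mu^2/1024\). (Your inequality \(h^2L^4/\mu\le h\mu/32^2\) does not follow from \eqref{eq:stepsize}.) The additive term is \(\frac{5L^2}{\mu}n^2\eta^3V\), and the geometric factor \(2/(\mu h)\) gives \(\frac{10L^2G^2}{\mu^2}n\eta^2\).

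(iii) An in-expectation induction cannot yield the sure bound \(V\le G^2\). Either invoke the deterministic first-exit argument of Lemma~\ref{lem:confinement}, or use the growth bound \(V(x)\le\frac1n\sum_i\norm{\nabla f_i(x)}^2\le2G_\star^2+2L^2\norm{x-x_\star}^2\). Together with \(\E\norm{y_k-x_\star}^2\le D^2\) and \(G_\star\le2LD\), this gives \(\E V(y_k)\le G^2\) and closes the induction.

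\textbf{Comparison of routes.} With these repairs your argument is a valid and slightly simpler alternative. It needs no separate \(\norm g^2\) bookkeeping, and it even drops the third term, which is dominated anyway because \(n^2\eta^3=h\cdot n\eta^2\le\frac{\mu}{32L^2}n\eta^2\). What the paper's exact expansion buys is the separation of the conditional mean \(\E R\) from the fluctuation of \(R\). The same master inequality is reused in Lemma~\ref{lem:lh-one-epoch}, where \(\E R\) is sharpened through the centered-prefix decomposition. Your single split charges all of \(\eta^2\E\norm R^2\) at weight \(1/(\mu h)\), so it cannot benefit from a better mean bound.
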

\paragraph{Interpretation.}
Without continuity of the Hessian, the conditional mean of the epoch error is controlled only by its standard deviation.
This replaces the leading \(\eta^2\) contribution in \cref{thm:lh-main} by \(n\eta^2\), which is exactly the factor of \(n\) lost in the merely smooth class.
See \Cref{sec:common-estimates} and Appendix~\ref{app:baseline-proofs-smooth} for the proof sketch and the detailed proof.


\begin{corollary}[Optimal \((n,K)\)-rate in the merely smooth class]\label{cor:smooth-rate}
Under \eqref{eq:horizon-eta}--\eqref{eq:epoch-condition},
\begin{align}
\E\norm{y_K-x_\star}^2
\le{}&
\frac{D^2}{T^2}
+\frac{256L^2G^2n\log^2T}{\mu^4T^2}
+\frac{256L^2G^2n^2\log^3T}{\mu^4T^3}.
\label{eq:smooth-cor-explicit}
\end{align}
In particular,
\begin{equation}
\E\bigl[F(y_K)-F(x_\star)\bigr]
=
\wtO\!\left(\frac{n}{T^2}\right)
=
\wtO\!\left(\frac{1}{nK^2}\right).
\label{eq:smooth-rate}
\end{equation}
\end{corollary}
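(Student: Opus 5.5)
The corollary follows from \cref{thm:smooth-main}: substitute the horizon stepsize \eqref{eq:horizon-eta}, check that it is admissible, and bound each term of \eqref{eq:smooth-global}.

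\emph{Admissibility.} With \(\eta=4\log T/(\mu T)\) we have \(h=n\eta=4\log T/(\mu K)\). The epoch condition \eqref{eq:epoch-condition}, \(K\ge 128L^2\log T/\mu^2\), gives
\[
h\le \frac{4\mu}{128L^2}=\frac{\mu}{32L^2}.
\]
This is exactly \eqref{eq:stepsize}, so \cref{thm:smooth-main} applies for this \(K\).

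\emph{Term-by-term evaluation.}
\begin{itemize}
\item Exponential term: \(\mu\eta T/2=2\log T\), so \(\exp(-\mu\eta T/2)D^2=D^2/T^2\).
\item Second term: \(\eta^2=16\log^2T/(\mu^2T^2)\), so
\[
\frac{16L^2G^2}{\mu^2}n\eta^2=\frac{256L^2G^2n\log^2T}{\mu^4T^2}.
\]
\item Third term: \(\eta^3=64\log^3T/(\mu^3T^3)\), so
\[
\frac{4L^2G^2}{\mu}n^2\eta^3=\frac{256L^2G^2n^2\log^3T}{\mu^4T^3}.
\]
\end{itemize}
Adding the three gives \eqref{eq:smooth-cor-explicit}.

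\emph{Function-value rate.} \(F\) is \(L\)-smooth as an average of \(L\)-smooth components, and \(\nabla F(x_\star)=0\). Hence
\[
F(y_K)-F(x_\star)\le \tfrac L2\norm{y_K-x_\star}^2,
\]
and taking expectations transfers the distance bound to the objective gap. Since \(T\ge n\), the term \(n^2/T^3\) is at most \(n/T^2\), so the \(n/T^2\) term dominates. Suppressing logarithms and constants gives \(\wtO(n/T^2)=\wtO(1/(nK^2))\), which is \eqref{eq:smooth-rate}.

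The argument is routine. The only step needing care is the admissibility check, where the constant \(128\) in \eqref{eq:epoch-condition} must match the \(32\) in \eqref{eq:stepsize}.
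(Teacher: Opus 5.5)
Your proposal is correct and follows the paper's proof: check that the epoch condition yields \eqref{eq:stepsize}, substitute \(\eta=4\log T/(\mu T)\) into \eqref{eq:smooth-global} term by term, and pass to the objective gap using the smoothness upper bound in \eqref{eq:function-distance} together with \(n^2/T^3\le n/T^2\). Your constants \(256\) and \(256\) and the admissibility calculation are correct, and the \(D^2/T^2\) term is absorbed into \(\wtO(n/T^2)\) because \(n\ge1\).
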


The main text next states the estimates that make the two rates different and explains how they are combined.
All algebraic proofs are deferred to Appendix~\ref{app:baseline-proofs}.
Whenever one epoch is analyzed, the expectation is conditional on its starting point and is taken only over the fresh permutation; the tower property is used when epochs are combined.

\section{Proof ingredients for the main results}
\label{sec:common-estimates}

Every upper bound in \Cref{sec:main-results} uses the same three ingredients.
First, a full-gradient step decreases the relevant error measure.
Second, sampling without replacement gives an exact second moment for the partial sums evaluated at the epoch-start point.
Third, the actual inner iterates are compared with the iterates obtained from those partial sums.
The merely smooth and Hessian-regular proofs differ only in how they bound the conditional mean of the resulting epoch error.

\subsection{Consequences of strong convexity and smoothness}

\begin{lemma}[Basic smooth strongly convex inequalities]
\label{lem:basic-sc-smooth}
Under Assumptions~\ref{ass:smooth}--\ref{ass:sc}, the average objective is \(L\)-smooth and, for every \(x\in\R^d\),
\begin{align}
\norm{\nabla F(x)}
&\le L\norm{x-x_\star},
\label{eq:g-L-e}\\
\ip{x-x_\star}{\nabla F(x)}
&\ge
\frac{\mu L}{\mu+L}\norm{x-x_\star}^2
+\frac{1}{\mu+L}\norm{\nabla F(x)}^2                         \notag\\
&\ge
\frac{\mu}{2}\norm{x-x_\star}^2
+\frac{1}{2L}\norm{\nabla F(x)}^2,
\label{eq:interpolation}\\
\frac{\mu}{2}\norm{x-x_\star}^2
&\le F(x)-F(x_\star)
\le\frac{L}{2}\norm{x-x_\star}^2.
\label{eq:function-distance}
\end{align}
\end{lemma}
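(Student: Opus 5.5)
The plan is to derive all three claims from standard facts, taking them in order: smoothness of the average, the gradient bound and the distance sandwich, and finally the interpolation (co-coercivity) inequality, which is the only nontrivial step.

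\emph{Smoothness of \(F\).} Averaging Assumption~\ref{ass:smooth} with the triangle inequality gives
\[
\norm{\nabla F(x)-\nabla F(y)}\le\frac1n\sum_{i=1}^n\norm{\nabla f_i(x)-\nabla f_i(y)}\le L\norm{x-y},
\]
so \(F\) is \(L\)-smooth. Taking \(y=x_\star\) and using \(\nabla F(x_\star)=0\) from Assumption~\ref{ass:sc} gives \eqref{eq:g-L-e} at once.

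\emph{The sandwich \eqref{eq:function-distance}.} The upper bound is the descent lemma for the \(L\)-smooth function \(F\) at the stationary point \(x_\star\): the linear term \(\ip{\nabla F(x_\star)}{x-x_\star}\) vanishes. The lower bound is the strong convexity inequality \(F(x)\ge F(x_\star)+\ip{\nabla F(x_\star)}{x-x_\star}+\frac\mu2\norm{x-x_\star}^2\), again with a vanishing gradient term.

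\emph{The interpolation inequality \eqref{eq:interpolation}.} This is the classical inequality for \(\mu\)-strongly convex, \(L\)-smooth functions. The components may be nonconvex, but only \(F\) enters, and \(F\) is convex and \(L\)-smooth.
\begin{itemize}
\item If \(\mu=L\), set \(\phi=F-\frac\mu2\norm{\cdot}^2\). Then \(\phi\) is convex and \(0\)-smooth, so it is affine and \(\nabla F(x)=\mu(x-x_\star)\). The inequality then holds with equality by direct computation: both sides equal \(\mu\norm{x-x_\star}^2\).
\item If \(\mu<L\), set \(\phi:=F-\frac\mu2\norm{\cdot}^2\). This function is convex, and it is \((L-\mu)\)-smooth because its Hessian-free characterization follows from monotonicity of \(\nabla F-\mu I\) and the descent inequality for \(F\). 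We then apply the co-coercivity of a convex \((L-\mu)\)-smooth function, namely \(\ip{\nabla\phi(x)-\nabla\phi(y)}{x-y}\ge\frac1{L-\mu}\norm{\nabla\phi(x)-\nabla\phi(y)}^2\), with \(y=x_\star\).
\end{itemize}
Substituting \(\nabla\phi(x)=\nabla F(x)-\mu(x-x_\star)\) into the co-coercivity inequality and expanding the square, the cross terms rearrange to the first line of \eqref{eq:interpolation} with constants \(\frac{\mu L}{\mu+L}\) and \(\frac1{\mu+L}\).

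The main obstacle is justifying that \(\phi\) is \((L-\mu)\)-smooth without assuming twice differentiability. I would use the equivalent condition that \(\frac{L-\mu}{2}\norm{\cdot}^2-\phi=\frac L2\norm{\cdot}^2-F\) is convex, which holds because \(F\) is \(L\)-smooth and convex. The standard co-coercivity proof, via the Baillon--Haddad argument of minimizing \(\phi(z)-\ip{\nabla\phi(y)}{z}\) with the descent lemma, then applies. Alternatively, one can simply cite Nesterov's textbook result (Theorem 2.1.12).

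\emph{The weakened second line.} Since \(\mu\le L\), we have \(\frac{\mu L}{\mu+L}\ge\frac{\mu L}{2L}=\frac\mu2\) and \(\frac1{\mu+L}\ge\frac1{2L}\). This yields the second line of \eqref{eq:interpolation} and completes the lemma.
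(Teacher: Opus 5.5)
Your proposal is correct and follows the paper's proof: averaging gives \(L\)-smoothness of \(F\), strong convexity and the descent lemma at the stationary point \(x_\star\) give the sandwich, and co-coercivity of the convex, \((L-\mu)\)-smooth shift \(F-\frac\mu2\norm{\cdot}^2\) at the pair \((x,x_\star)\) gives the interpolation inequality. The differences are cosmetic. You treat \(\mu=L\) directly (the shift is affine, so equality holds) where the paper appeals to continuity, and you spell out why the shift is \((L-\mu)\)-smooth without second derivatives. One small correction: the quantity equal to \(\nabla F(x)-\mu(x-x_\star)\) is \(\nabla\phi(x)-\nabla\phi(x_\star)\), not \(\nabla\phi(x)\) itself.
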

These inequalities are the deterministic part of the analysis and are standard results in convex and smooth optimization.
They show, in particular, that a full-gradient step supplies a negative term proportional to \(\|\nabla F(x)\|^2\), which absorbs state-dependent pieces of the reshuffling error.
See Appendix~\ref{app:baseline-proofs-shared} for a concise proof.


\subsection{A deterministic invariant ball}

\begin{lemma}[Trajectory confinement]\label{lem:confinement}
Suppose an epoch starts at \(x\) with \(\norm{x-x_\star}\le D\), and suppose \eqref{eq:stepsize} holds.
For every permutation and every inner iterate of that epoch,
\begin{equation}
\norm{x_i-x_\star}\le2D,
\qquad
\norm{\nabla f_j(x_i)}\le G
\quad(i=0,\ldots,n,\ j\in[n]),
\label{eq:inner-bound}
\end{equation}
and the next epoch starts in the smaller ball:
\begin{equation}
\norm{x_n-x_\star}\le D.
\label{eq:epoch-bound}
\end{equation}
Consequently, these bounds hold deterministically for every epoch.
\end{lemma}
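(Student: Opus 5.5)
Write \(x_0=x\) for the epoch start and \(e_i:=x_i-x_\star\). The plan has two parts. First, a crude within-epoch growth bound shows that no inner iterate leaves the ball of radius \(2D\). Second, a comparison with one full-gradient step of length \(h=n\eta\) shows that the epoch endpoint returns to the ball of radius \(D\). The deterministic statement for all epochs then follows by induction on \(k\): \(\|y_0-x_\star\|\le D\) by definition of \(D\), and \eqref{eq:epoch-bound} propagates this to every epoch start.

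\textbf{Step 1 (inner bound).} By Assumption~\ref{ass:smooth}, \(\|\nabla f_j(z)\|\le G_\star+L\|z-x_\star\|\). Hence
\[
\|e_i\|\le(1+\eta L)\|e_{i-1}\|+\eta G_\star,
\]
so by induction
\[
\|e_i\|\le(1+\eta L)^i\bigl(D+G_\star/L\bigr)-G_\star/L .
\]
Since \(n\eta L\le 1/32\) by \eqref{eq:path-stability}, we have \((1+\eta L)^n\le e^{1/32}\). Together with \(G_\star\le 2LD\) from \eqref{eq:G-4LD}, this gives
\[
\|e_i\|\le D+(e^{1/32}-1)(D+G_\star/L)\le D+0.033\cdot 3D<2D.
\]
Then \(\|\nabla f_j(x_i)\|\le G_\star+2LD=G\), which is \eqref{eq:inner-bound}.

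\textbf{Step 2 (endpoint contraction).} Write
\[
x_n=x-h\nabla F(x)-\eta\sum_{i=1}^n\bigl(\nabla f_{\pi(i)}(x_{i-1})-\nabla f_{\pi(i)}(x)\bigr),
\]
using that the sum of all \(n\) component gradients at \(x\) equals \(n\nabla F(x)\) for every permutation. The remainder \(r\) satisfies
\[
\|r\|\le\eta L\sum_i\|x_{i-1}-x\|\le\eta L\sum_i (i-1)\eta G\le \tfrac12 h^2 L G .
\]
For the full step, Lemma~\ref{lem:basic-sc-smooth} with \(hL\le 1\) gives
\[
\|x-h\nabla F(x)-x_\star\|^2\le(1-h\mu)\|x-x_\star\|^2 ,
\]
so the full step is at distance at most \((1-h\mu/2)\|x-x_\star\|\). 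Consequently
\[
\|x_n-x_\star\|\le (1-h\mu/2)\|x-x_\star\|+\tfrac12 h^2LG .
\]

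\textbf{Main obstacle.} The difficulty is closing this when \(\|x-x_\star\|\) is small relative to \(D\). The additive error \(\tfrac12h^2LG\le 2h^2L^2D\) must then be absorbed without contraction from the current distance. I would split into two cases.
\begin{itemize}
\item If \(\|x-x_\star\|\le D/2\), the trivial bound \(\|r\|+h\|\nabla F(x)\|\le 2h^2L^2D+hLD/2\le D/2\) suffices, since \(hL\le1/32\).
\item Otherwise \(h\mu\|x-x_\star\|/2\ge h\mu D/4\). The requirement \(2h^2L^2D\le h\mu D/4\) is exactly \(h\le \mu/(8L^2)\), which is implied by \eqref{eq:stepsize}. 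Hence \(\|x_n-x_\star\|\le D\).
\end{itemize}
If the constant \(G\le4LD\) makes the second case tight, I would sharpen \(\|r\|\) using \(\|\nabla f_j\|\le G\) only along the path, but the slack in \(\mu/(32L^2)\) should suffice.
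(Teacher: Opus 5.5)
Your proof is correct and has the same structure as the paper's: an inner confinement bound, then a comparison of \(x_n\) with one full-gradient step of length \(h\) plus a remainder of size at most \(\tfrac12h^2LG\le 2h^2L^2D\), then induction over epochs; your discrete Gr\"onwall recursion \(\|e_i\|\le(1+\eta L)\|e_{i-1}\|+\eta G_\star\) in Step~1 is a valid substitute for the paper's first-exit argument, which instead shows \(\|x_i-x\|\le hG\le D/8\) before any exit. The ``main obstacle'' you identify does not exist: the bound \((1-h\mu/2)\|x-x_\star\|+2h^2L^2D\) is increasing in \(\|x-x_\star\|\), so substituting \(\|x-x_\star\|\le D\) gives \((1-h\mu/2+2h^2L^2)D\le D\) whenever \(h\le\mu/(4L^2)\), which is how the paper closes, and this makes your case split and the final hedge unnecessary.
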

\paragraph{Interpretation.}
The theorem does not assume bounded iterates or bounded component gradients.
This lemma derives both bounds from the initial distance, component smoothness, and a sufficiently small epoch stepsize.
It also guarantees that every epoch starts in the same radius-\(D\) ball, so the constants used later are uniform across epochs.
See Appendix~\ref{app:baseline-proofs-shared} for the proof.


\subsection{Partial sums evaluated at the epoch start}

Fix one epoch and condition on its starting point \(x\).
Throughout this epoch, write
\begin{equation}
g_i:=\nabla f_i(x),
\qquad
g:=\frac1n\sum_{i=1}^n g_i=\nabla F(x),
\qquad
V:=\frac1n\sum_{i=1}^n\norm{g_i-g}^2.
\label{eq:fixed-epoch-general}
\end{equation}
In the strongly convex sections, we additionally write \(e:=x-x_\star\).
By Lemma \ref{lem:confinement},
\begin{equation}
\norm e\le D,
\qquad
\norm{g_i}\le G,
\qquad
V\le G^2.
\label{eq:fixed-bounds}
\end{equation}
For a uniform permutation \(\pi\), define only the centered prefix
\begin{equation}
Z_m:=\sum_{j=1}^m(g_{\pi(j)}-g),
\qquad m=0,\ldots,n.
\label{eq:centered-prefix}
\end{equation}
Thus the corresponding uncentered prefix is simply \(mg+Z_m\), and \(\E Z_m=0\).

\begin{lemma}[Finite-population prefix moments]\label{lem:prefix}
For every \(0\le m\le n\),
\begin{equation}
\E\norm{mg+Z_m}^2
=m^2\norm g^2+\frac{m(n-m)}{n-1}V.
\label{eq:prefix-exact}
\end{equation}
Consequently,
\begin{align}
\sum_{m=0}^{n-1}\E\norm{mg+Z_m}^2
&=\frac{n(n-1)(2n-1)}{6}\norm g^2
 +\frac{n(n+1)}{6}V\notag\\
&\le\frac13\bigl(n^3\norm g^2+n^2V\bigr),
\label{eq:prefix-sum}
\\
\sum_{m=0}^{n-1}\E\norm{Z_m}^2
&=\frac{n(n+1)}{6}V.
\label{eq:centered-prefix-sum}
\end{align}
\end{lemma}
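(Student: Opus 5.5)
The plan is to expand $\norm{mg+Z_m}^2=m^2\norm g^2+2m\ip{g}{Z_m}+\norm{Z_m}^2$. Since $\E Z_m=0$, the cross term has zero expectation, so everything reduces to computing $\E\norm{Z_m}^2$ exactly. This is the classical sampling-without-replacement variance computation. Write $u_i:=g_i-g$, so that $\sum_{i=1}^n u_i=0$ and $\frac1n\sum_i\norm{u_i}^2=V$. Then
\[
\E\norm{Z_m}^2=\sum_{j=1}^m\E\norm{u_{\pi(j)}}^2+\sum_{j\ne l\le m}\E\ip{u_{\pi(j)}}{u_{\pi(l)}} .
\]
For each position, $\pi(j)$ is uniform, so $\E\norm{u_{\pi(j)}}^2=V$. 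For $j\ne l$, the pair $(\pi(j),\pi(l))$ is uniform over ordered pairs of distinct labels. Hence
\[
\E\ip{u_{\pi(j)}}{u_{\pi(l)}}=\frac{1}{n(n-1)}\sum_{a\ne b}\ip{u_a}{u_b}=\frac{1}{n(n-1)}\Bigl(\norm{\textstyle\sum_a u_a}^2-nV\Bigr)=-\frac{V}{n-1}.
\]
This gives $\E\norm{Z_m}^2=mV-m(m-1)\frac{V}{n-1}=\frac{m(n-m)}{n-1}V$, which is \eqref{eq:prefix-exact}. The case $m=0$ is trivial, and $n\ge2$ ensures the division is legitimate.

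For the sums, I will use $\sum_{m=0}^{n-1}m^2=\frac{(n-1)n(2n-1)}{6}$ and
\[
\sum_{m=0}^{n-1}m(n-m)=n\cdot\frac{n(n-1)}2-\frac{(n-1)n(2n-1)}6=\frac{n(n-1)(n+1)}{6}.
\]
Dividing the second identity by $n-1$ yields $\frac{n(n+1)}6V$, which gives \eqref{eq:centered-prefix-sum} and the equality in \eqref{eq:prefix-sum}.

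The final inequality then needs two elementary bounds. The first is $n(n-1)(2n-1)\le 2n^3$, which holds because $(n-1)(2n-1)=2n^2-3n+1\le2n^2$ for $n\ge1$. The second is $n(n+1)\le 2n^2$, which holds for $n\ge1$. Together they give the factor $\frac13$ on both terms.

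There is no real obstacle here. The only step that needs care is the negative pairwise correlation $-V/(n-1)$, which is where $\sum_i u_i=0$ enters.
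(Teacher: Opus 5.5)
Your proof is correct and follows essentially the paper's route. Both arguments kill the cross term via $\E Z_m=0$ and compute $\E\norm{Z_m}^2$ exactly from the negative pairwise correlation forced by $\sum_i(g_i-g)=0$; the only cosmetic difference is that the paper indexes by label-inclusion indicators $I_i$ with $\E[I_iI_j]=\frac{m(m-1)}{n(n-1)}$, whereas you index by ordered position pairs. Your summation identities and the two elementary inequalities that yield the factor $\frac13$ are also correct; the paper leaves the latter implicit.
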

\paragraph{Interpretation.}
The identity separates the squared size of the partial sum into a deterministic part, \(m^2\|g\|^2\), and a sampling-without-replacement variance term.
After summing over all positions, the centered part costs order \(n^2V\), rather than the order \(n^3\) obtained from a worst-case bound on the uncentered sum.
See Appendix~\ref{app:baseline-proofs-shared} for the proof.


\subsection{From epoch-start gradients to the actual inner iterates}

Let the displacement and cumulative within-epoch error be
\begin{equation}
d_m:=x_m-x,
\qquad
R_m:=\sum_{j=1}^m
\bigl[\nabla f_{\pi(j)}(x_{j-1})-g_{\pi(j)}\bigr],
\qquad R_0=0.
\label{eq:path-defs}
\end{equation}
The update gives the exact identity
\begin{equation}
d_m=-\eta\bigl(mg+Z_m+R_m\bigr),
\label{eq:path-exact}
\end{equation}
and each summand in \(R_m\) has norm at most \(L\norm{d_{j-1}}\).

\begin{lemma}[Squared displacement and epoch-error variance]\label{lem:path-energy}
Under \eqref{eq:path-stability},
\begin{align}
\sum_{m=0}^{n-1}\E\norm{d_m}^2
&\le\eta^2\bigl(n^3\norm g^2+n^2V\bigr),
\label{eq:path-energy-bound}\\
\E\norm{R_n}^2
&\le L^2\eta^2\bigl(n^4\norm g^2+n^3V\bigr).
\label{eq:R-second-bound}
\end{align}
\end{lemma}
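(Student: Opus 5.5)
We prove \eqref{eq:path-energy-bound} first and then deduce \eqref{eq:R-second-bound} from it. Both bounds work per permutation and then take expectations, using Lemma~\ref{lem:prefix} for the prefix moments. The key is a pathwise Grönwall-type bound showing that \(R_m\) is a small perturbation of the prefix \(mg+Z_m\). We keep the prefix intact rather than bounding it by \(mG\).

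\textbf{Step 1 (pathwise control of \(R_m\)).} Write \(S_m:=mg+Z_m\). By \eqref{eq:path-exact} and the bound on each summand of \(R_m\),
\[
\norm{R_m}\le L\sum_{j=1}^{m}\norm{d_{j-1}}\le \eta L\sum_{j=0}^{m-1}\bigl(\norm{S_j}+\norm{R_j}\bigr).
\]
Set \(r_m:=\max_{j\le m}\norm{R_j}\) and \(s_m:=\max_{j\le m}\norm{S_j}\). This gives \(r_m\le \eta L m(s_m+r_m)\). Since \(\eta Lm\le hL\le 1/32\), we obtain \(r_m\le \tfrac{32}{31}\,\eta L m\,s_m\).

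Maxima are awkward in expectation, so we instead use a squared version via Cauchy--Schwarz:
\[
\norm{R_m}^2\le \eta^2L^2 m\sum_{j<m}\norm{S_j+R_j}^2\le 2\eta^2L^2m\sum_{j<m}\bigl(\norm{S_j}^2+\norm{R_j}^2\bigr).
\]
Summing over \(m\le n-1\) and writing \(A:=\sum_{m<n}\norm{R_m}^2\) and \(B:=\sum_{m<n}\norm{S_m}^2\) yields
\[
A\le 2\eta^2L^2\tfrac{n^2}{2}(B+A)\le (hL)^2(A+B).
\]
Because \((hL)^2\le 1/1024\), this gives \(A\le \tfrac{1}{1023}B\).

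\textbf{Step 2 (displacement energy).} From \(\norm{d_m}^2\le \eta^2(1+\epsilon)\norm{S_m}^2+\eta^2(1+\epsilon^{-1})\norm{R_m}^2\) with, say, \(\epsilon=1/10\), summing gives
\[
\sum_m\norm{d_m}^2\le \eta^2\Bigl(1.1+\tfrac{11}{1023}\Bigr)B\le 1.2\,\eta^2 B .
\]
Taking expectations and invoking \eqref{eq:prefix-sum}, \(\E B\le \tfrac13(n^3\norm g^2+n^2V)\). Since \(1.2/3<1\), this proves \eqref{eq:path-energy-bound} with room to spare.

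\textbf{Step 3 (variance of \(R_n\)).} By Cauchy--Schwarz,
\[
\norm{R_n}^2\le L^2 n\sum_{j=0}^{n-1}\norm{d_j}^2 .
\]
Taking expectations and applying \eqref{eq:path-energy-bound} gives
\[
\E\norm{R_n}^2\le L^2\eta^2(n^4\norm g^2+n^3V),
\]
which is exactly \eqref{eq:R-second-bound}.

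\textbf{Main obstacle.} The delicate point is the self-referential bound in Step~1. We must absorb \(A\) into the left-hand side while keeping the centered prefix energy \(B\), and hence the \(n^2V\) scaling. Any pathwise bound that replaced \(\norm{S_j}\) by \(jG\) would lose this and give only \(n^3\).

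The absorption works because the coefficient \((hL)^2\) is uniformly small under \eqref{eq:path-stability}. It needs no structure on the random labels beyond the deterministic Lipschitz bound \(\norm{\nabla f_{\pi(j)}(x_{j-1})-g_{\pi(j)}}\le L\norm{d_{j-1}}\). The argument therefore holds for every permutation before the expectation is taken.
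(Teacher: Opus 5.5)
Your proof is correct and follows essentially the paper's route: the same split $d_m=-\eta(mg+Z_m+R_m)$, the same Cauchy--Schwarz bound of $\|R_m\|^2$ by $mL^2$ times the accumulated displacement energy, the prefix moments of Lemma~\ref{lem:prefix}, and the same final Cauchy--Schwarz step $\|R_n\|^2\le L^2 n\sum_{j<n}\|d_j\|^2$. The only difference is how the self-referential inequality is closed: you absorb pathwise in one step ($A\le (hL)^2(A+B)$) with a weighted Young split, whereas the paper runs a forward Gr\"onwall recursion for $\mathcal E_m$ in expectation and bounds the product by $e^{h^2L^2}$; your max-based bound at the start of Step~1 is never used and can be dropped.
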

\paragraph{Interpretation.}
The first inequality bounds the total squared displacement of the actual inner iterates from the epoch-start point.
The second bounds the accumulated error made by evaluating \(\nabla f_{\pi(j)}\) at \(x_{j-1}\) instead of at \(x\).
These estimates are valid even though the permutation sum and the actual inner iterates are dependent.
See Appendix~\ref{app:baseline-proofs-shared} for the proof.


For the rest of the proof, abbreviate the full epoch error by
\begin{equation}
R:=R_n.
\label{eq:R-def}
\end{equation}
The epoch endpoint is
\begin{equation}
x_n=x-hg-\eta R.
\label{eq:endpoint-R}
\end{equation}
The variance bound \eqref{eq:R-second-bound} is common to both regularity regimes.
The distinction is how sharply one can control the mean \(\E R\).

\subsection{The merely smooth regime}
\label{sec:smooth-regime}

This section states the one-epoch estimate behind \cref{thm:smooth-main} and Corollary \ref{cor:smooth-rate} and explains why the absence of Hessian regularity loses a factor of \(n\).

\begin{lemma}[A universal mean-error bound]\label{lem:smooth-mean}
Under \eqref{eq:stepsize}, we can bound the expectation of \eqref{eq:R-def} as
\begin{equation}
\norm{\E R}
\le L\eta\bigl(n^2\norm{g}+n^{3/2}\sqrt V\bigr).
\label{eq:smooth-mean}
\end{equation}
\end{lemma}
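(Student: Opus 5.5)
The bound follows from Jensen's inequality, $\norm{\E R}\le\E\norm{R}\le\sqrt{\E\norm{R}^2}$, combined with the second-moment estimate \eqref{eq:R-second-bound} of Lemma~\ref{lem:path-energy}. Condition \eqref{eq:stepsize} implies the path-stability condition \eqref{eq:path-stability} because $\mu\le L$, so that lemma applies. This is exactly the sense in which the merely smooth regime controls the mean only through the standard deviation. It is also the source of the lost factor of $n$ discussed after \cref{thm:smooth-main}.

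Concretely, Lemma~\ref{lem:path-energy} gives
\[
\E\norm{R}^2\le L^2\eta^2\bigl(n^4\norm g^2+n^3V\bigr).
\]
Taking square roots and using $\sqrt{a+b}\le\sqrt a+\sqrt b$ for $a,b\ge0$ yields
\[
\sqrt{\E\norm R^2}\le L\eta\bigl(n^2\norm g+n^{3/2}\sqrt V\bigr).
\]
Jensen's inequality (convexity of the norm, then of the square) gives $\norm{\E R}\le \E\norm R\le(\E\norm R^2)^{1/2}$, which is \eqref{eq:smooth-mean}.

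There is no real obstacle once Lemma~\ref{lem:path-energy} is available. All the difficulty lies there: bounding $\E\norm{R_n}^2$ despite the dependence between the labels and the inner iterates. That lemma itself comes from $\norm{R_n}\le L\sum_{j}\norm{d_{j-1}}$, Cauchy--Schwarz giving $\norm{R_n}^2\le L^2 n\sum_{m<n}\norm{d_m}^2$, and \eqref{eq:path-energy-bound}. Since it is stated earlier, I will simply invoke it.

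The only point to check is that the constants match. The bound \eqref{eq:R-second-bound} already has the claimed form, so no extra constant appears.
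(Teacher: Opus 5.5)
Your proposal is correct and is essentially the paper's own proof: Jensen's inequality gives $\norm{\E R}\le\sqrt{\E\norm{R}^2}$, then you insert the second-moment bound \eqref{eq:R-second-bound} of Lemma~\ref{lem:path-energy} and split the square root with $\sqrt{a+b}\le\sqrt a+\sqrt b$. Your remark that \eqref{eq:stepsize} implies \eqref{eq:path-stability}, because $\mu\le L$, is the right justification for invoking that lemma.
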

\paragraph{Interpretation.}
A general estimate without additional regularity follows from Jensen's inequality: the norm of the mean is bounded by its root mean square.
Applying Jensen's inequality to Lemma~\ref{lem:path-energy} and separating the two terms under the square root gives the displayed bound.

\begin{lemma}[One-epoch recursion without Hessian regularity]\label{lem:smooth-one-epoch}
Condition on an epoch start \(x\) with \(\norm{x-x_\star}\le D\).
Under \eqref{eq:stepsize},
\begin{align}
\E\bigl[\norm{x_n-x_\star}^2\mid x\bigr]
\le{}&
\left(1-\frac{\mu h}{2}\right)\norm{e}^2
+\frac{8L^2}{\mu}n^2\eta^3V
+2L^2n^3\eta^4V.
\label{eq:smooth-one-epoch}
\end{align}
\end{lemma}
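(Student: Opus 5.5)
Starting from \eqref{eq:endpoint-R}, I write $x_n-x_\star=(e-hg)-\eta R$ and expand the square:
\[
\E\norm{x_n-x_\star}^2=\norm{e-hg}^2-2\eta\ip{e-hg}{\E R}+\eta^2\E\norm{R}^2 .
\]
The proof then has three pieces.

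\emph{Deterministic part.} Since $g=\nabla F(x)$, the interpolation inequality \eqref{eq:interpolation} gives
\[
\norm{e-hg}^2\le(1-\mu h)\norm e^2-\frac{h}{L}\norm g^2+h^2\norm g^2\le(1-\mu h)\norm e^2-\frac{h}{2L}\norm g^2,
\]
where the last step uses $hL\le 1/32$. This negative $\norm g^2$ term is the budget that will absorb every state-dependent contribution below.

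\emph{Cross term.} I bound it by Cauchy--Schwarz using Lemma~\ref{lem:smooth-mean}:
\[
2\eta\norm{e-hg}\norm{\E R}\le 2\eta\norm{e}\,L\eta\bigl(n^2\norm g+n^{3/2}\sqrt V\bigr),
\]
since $\norm{e-hg}\le\norm e$ (or at most $(1+hL)\norm e$). I then apply Young's inequality to each product.
\begin{itemize}
\item For $2L\eta^2 n^{3/2}\norm e\sqrt V$, I split as $\frac{\mu h}{4}\norm e^2+\frac{4L^2\eta^4n^3V}{\mu h}$. Because $h=n\eta$, the second piece equals $\frac{4L^2}{\mu}n^2\eta^3V$. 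This is the source of the $\frac{8L^2}{\mu}n^2\eta^3V$ term, with constants doubled to allow slack.
\item For $2L\eta^2n^2\norm e\norm g=2Lh\eta n\norm e\norm g$, I split as $\frac{\mu h}{4}\norm e^2+\frac{4L^2h\eta^2n^2}{\mu}\norm g^2$. The second piece equals $\frac{h}{2L}\norm g^2\cdot\frac{8L^3\eta^2n^2}{\mu}=\frac{h}{2L}\norm g^2\cdot\frac{8L^3h^2}{\mu}$. Under \eqref{eq:stepsize}, $\frac{8L^3h^2}{\mu}\le \frac{8L\mu}{32^2L^2}\cdot L\le\frac1{128}$, so this piece is a small fraction of the available $\frac{h}{2L}\norm g^2$.
\end{itemize}
The two $\frac{\mu h}{4}$ pieces together consume half of the contraction, leaving the factor $1-\mu h/2$.

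\emph{Variance term.} By \eqref{eq:R-second-bound},
\[
\eta^2\E\norm R^2\le L^2\eta^4n^4\norm g^2+L^2n^3\eta^4V.
\]
The $V$ part gives the $L^2n^3\eta^4V$ term; the statement allows $2L^2n^3\eta^4V$, which leaves slack for the constant in the $\norm e^2$ term. The $\norm g^2$ part equals $\frac{h}{2L}\norm g^2\cdot 2L^3h^3$, and $2L^3h^3$ is tiny under \eqref{eq:path-stability}, so it is also absorbed by the negative term.

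Summing the three pieces gives \eqref{eq:smooth-one-epoch}. The main obstacle is the bookkeeping in the cross term: the $\norm g$ contributions must be charged to the negative $-\frac{h}{2L}\norm g^2$ term rather than to $\norm e^2$, because bounding $\norm g\le L\norm e$ would produce an $L^2\eta^2 n^2/\mu$-type loss in the contraction. If the constants do not close exactly, I would use $(1-\mu h)+\frac{\mu h}{2}$ together with the remaining slack in the $V$ coefficients (the factors $8$ and $2$ in the statement) to absorb the leftover constants.
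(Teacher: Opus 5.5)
Your proof is correct and follows essentially the paper's route: the endpoint identity \eqref{eq:endpoint-R}, interpolation, Lemma~\ref{lem:smooth-mean} for $\norm{\E R}$, Lemma~\ref{lem:path-energy} for $\E\norm R^2$, then Young and absorption; the only differences are bookkeeping (you keep $e-hg$ together and use $\norm{e-hg}\le\norm e$ instead of splitting off $2\eta h\ip{g}{\E R}\le h^2\norm g^2+\eta^2\E\norm R^2$, and you charge the $n^2\norm g$ piece to $-\frac{h}{2L}\norm g^2$), which even gives the constants $4$ and $1$ in place of $8$ and $2$. Your closing remark overstates that this charging is necessary: the paper simply uses $\norm g\le L\norm e$ there, and the resulting loss $2L^2h^2\norm e^2\le\frac{\mu h}{16}\norm e^2$ is harmless under \eqref{eq:stepsize}.
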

\paragraph{Interpretation.}
One complete reshuffled pass contracts the current squared distance by a fixed fraction of \(\mu h\), up to two additive variance terms.
This is the precise one-epoch inequality needed to prove \cref{thm:smooth-main}.
See Appendix~\ref{app:baseline-proofs-smooth} for the proof.


The combination of Lemma \ref{lem:confinement} and \ref{lem:smooth-one-epoch} gives \cref{thm:smooth-main}; iterating its affine recursion and substituting the horizon-dependent stepsize gives Corollary \ref{cor:smooth-rate}.
The complete argument is in Appendix~\ref{app:baseline-proofs-smooth}.

\subsection{The Lipschitz-Hessian regime}
\label{sec:lh-regime}

We now add Assumption~\ref{ass:hess}.
The second-moment estimate \eqref{eq:R-second-bound} is unchanged.
The new assumption is used only to sharpen the conditional mean \(\E R\).
For the first \(m\) labels in the permutation, all component gradients are temporarily evaluated at the common epoch-start point \(x\).
Their sum is \(mg+Z_m\), where \(mg\) is deterministic after conditioning on \(x\) and \(\E[Z_m\mid x]=0\).
We Taylor-expand only with respect to \(Z_m\), so its linear contribution vanishes after conditional expectation.

\subsubsection{A reference iterate with all preceding gradients evaluated at the epoch start}

The actual iterate before position \(i\) is
\[
 x_{i-1}=x-\eta\sum_{j=1}^{i-1}
 \nabla f_{\pi(j)}(x_{j-1}).
\]
For comparison, define
\begin{equation}
\widetilde x_{i-1}
:=x-\eta\sum_{j=1}^{i-1}\nabla f_{\pi(j)}(x)
=x-\eta\bigl((i-1)g+Z_{i-1}\bigr).
\label{eq:reference-point}
\end{equation}
Thus \(\widetilde x_{i-1}\) uses the same component labels as the algorithm, but every preceding component gradient is evaluated at the single point \(x\), rather than at the changing iterates \(x_0,x_1,\ldots\).

Recall that
\[
R=\sum_{i=1}^n
\bigl[\nabla f_{\pi(i)}(x_{i-1})-\nabla f_{\pi(i)}(x)\bigr].
\]
Insert \(\nabla f_{\pi(i)}(\widetilde x_{i-1})\) into every summand and define
\begin{align}
R^{\mathrm{ref}}
&:=\sum_{i=1}^n
\bigl[\nabla f_{\pi(i)}(\widetilde x_{i-1})
      -\nabla f_{\pi(i)}(x)\bigr],
\label{eq:R-reference}\\
R^{\mathrm{eval}}
&:=\sum_{i=1}^n
\bigl[\nabla f_{\pi(i)}(x_{i-1})
      -\nabla f_{\pi(i)}(\widetilde x_{i-1})\bigr].
\label{eq:R-evaluation}
\end{align}
Then \(R=R^{\mathrm{ref}}+R^{\mathrm{eval}}\).
The first vector can be analyzed from the permutation sums evaluated at \(x\).
The second vector is only the difference caused by using \(x_{i-1}\) rather than \(\widetilde x_{i-1}\) as the evaluation point for the current component.

\begin{lemma}[Evaluation-point correction]\label{lem:B}
Under \eqref{eq:path-stability},
\begin{equation}
\E\norm{R^{\mathrm{eval}}}^2
\le\frac12L^4\eta^4
\bigl(n^6\norm g^2+n^5V\bigr).
\label{eq:B-bound}
\end{equation}
\end{lemma}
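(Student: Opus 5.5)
The plan is to reduce the bound to the displacement energy already controlled in Lemma~\ref{lem:path-energy}, using only component smoothness and Cauchy--Schwarz. No cancellation is needed: the statement is a pure second-moment estimate with the crude power \(\eta^4\), so a deterministic pathwise bound followed by one expectation should suffice.

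\textbf{Step 1: express the evaluation-point gap through \(R\).} Comparing the actual iterate with the reference point \eqref{eq:reference-point}, and using \eqref{eq:path-exact}, gives
\[
x_{i-1}-\widetilde x_{i-1}=-\eta R_{i-1}.
\]
Hence, by Assumption~\ref{ass:smooth}, each summand of \eqref{eq:R-evaluation} has norm at most \(L\eta\norm{R_{i-1}}\). Applying the triangle inequality and then Cauchy--Schwarz over the \(n\) summands yields
\[
\norm{R^{\mathrm{eval}}}^2\le L^2\eta^2\,n\sum_{m=0}^{n-1}\norm{R_m}^2 .
\]

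\textbf{Step 2: bound each \(\norm{R_m}^2\) by the total displacement energy.} Each summand of \(R_m\) in \eqref{eq:path-defs} has norm at most \(L\norm{d_{j-1}}\). Cauchy--Schwarz then gives
\[
\norm{R_m}^2\le L^2 m\sum_{j=0}^{m-1}\norm{d_j}^2\le L^2 m\sum_{j=0}^{n-1}\norm{d_j}^2 .
\]
Summing over \(m=0,\ldots,n-1\) and using \(\sum_{m=0}^{n-1} m=n(n-1)/2\le n^2/2\), we get
\[
\sum_{m=0}^{n-1}\norm{R_m}^2\le \tfrac12 L^2 n^2\sum_{j=0}^{n-1}\norm{d_j}^2 .
\]

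\textbf{Step 3: take expectations.} Taking expectations and invoking \eqref{eq:path-energy-bound}, which is valid under \eqref{eq:path-stability}, gives
\[
\E\sum_{m=0}^{n-1}\norm{R_m}^2\le\tfrac12L^2\eta^2 n^2\bigl(n^3\norm g^2+n^2V\bigr).
\]
Multiplying by the prefactor \(L^2\eta^2 n\) from Step~1 produces exactly \(\tfrac12L^4\eta^4(n^6\norm g^2+n^5V)\), which is \eqref{eq:B-bound}.

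The only point needing care is that the permutation and the inner iterates are dependent. This causes no difficulty here, because every inequality in Steps~1 and~2 holds pathwise, for each fixed permutation. Expectation enters only through Lemma~\ref{lem:path-energy}, which already handles that dependence.
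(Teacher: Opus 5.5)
Your proof is correct and matches the paper's argument: you use the identity $x_{i-1}-\widetilde x_{i-1}=-\eta R_{i-1}$, Cauchy--Schwarz over the $n$ summands, the bound $\norm{R_m}^2\le mL^2\sum_{j<m}\norm{d_j}^2$ extended to the full sum, $\sum_m m\le n^2/2$, and finally \eqref{eq:path-energy-bound}. The only cosmetic difference is that you keep the inequalities pathwise and take one expectation at the end, whereas the paper takes expectations first and uses monotonicity of $\mathcal E_m$.
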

\paragraph{Interpretation.}
The reference iterate \(\widetilde x_{i-1}\) uses the same first \(i-1\) component labels as the algorithm, but evaluates every one of those component gradients at the common point \(x\).
This lemma bounds the error caused by replacing the actual evaluation point \(x_{i-1}\) by \(\widetilde x_{i-1}\).
Its order \(\eta^4\) is small enough not to determine the final rate.
See Appendix~\ref{app:baseline-proofs-Lip-Hessian} for the proof.


\subsubsection{Dependence between the component at position \texorpdfstring{$i$}{i} and the preceding indices}

At position \(i\), the component label \(\pi(i)\) is sampled from the labels not used in positions \(1,\ldots,i-1\).
Consequently it is dependent on the reference point \(\widetilde x_{i-1}\), which is built from those earlier labels.
The next lemma bounds the error caused by this dependence.

\begin{lemma}[Conditional-label coupling]\label{lem:coupling}
For every position \(i\),
\begin{equation}
\left\lVert
\E\nabla f_{\pi(i)}(\widetilde x_{i-1})
-
\E\nabla F(\widetilde x_{i-1})
\right\rVert
\le\eta L\sqrt{2V}.
\label{eq:coupling}
\end{equation}
\end{lemma}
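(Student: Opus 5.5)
The plan is an exchangeability (swap) argument, carried out at the level of the epoch-start gradients. Throughout, write \(m:=i-1\) and condition on the epoch start \(x\). By \eqref{eq:reference-point}, \(\widetilde x_{i-1}=x-\eta\sum_{j=1}^{m}g_{\pi(j)}\) depends only on the multiset of the first \(m\) labels. Since \(F=\frac1n\sum_k f_k\), we can split
\[
\E\nabla F(\widetilde x_{i-1})
=\frac1n\sum_{j=1}^{m}\E\nabla f_{\pi(j)}(\widetilde x_{i-1})
+\frac1n\sum_{j=i}^{n}\E\nabla f_{\pi(j)}(\widetilde x_{i-1}).
\]
The first sum runs over the used positions and the second over the unused positions.

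For the unused positions \(j\ge i\), transposing \(\pi(j)\) and \(\pi(i)\) preserves the uniform law of \(\pi\) and leaves \(\widetilde x_{i-1}\) unchanged. Hence each such term equals exactly \(\E\nabla f_{\pi(i)}(\widetilde x_{i-1})\).

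For a used position \(j\le m\), I apply the same transposition \(\pi(j)\leftrightarrow\pi(i)\). Now the reference point does change: \(\widetilde x_{i-1}\) becomes \(\widetilde x_{i-1}^{(j)}:=\widetilde x_{i-1}-\eta\bigl(g_{\pi(i)}-g_{\pi(j)}\bigr)\). Because the transposed permutation has the same law, we obtain \(\E\nabla f_{\pi(j)}(\widetilde x_{i-1})=\E\nabla f_{\pi(i)}(\widetilde x_{i-1}^{(j)})\). Component \(L\)-smoothness (Assumption~\ref{ass:smooth}) and Jensen then give
\[
\bigl\|\E\nabla f_{\pi(j)}(\widetilde x_{i-1})-\E\nabla f_{\pi(i)}(\widetilde x_{i-1})\bigr\|
\le L\eta\,\E\|g_{\pi(i)}-g_{\pi(j)}\|
\le L\eta\sqrt{\E\|g_{\pi(i)}-g_{\pi(j)}\|^2}.
\]

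Next I compute this second moment exactly. The pair \((\pi(i),\pi(j))\) is uniform over ordered pairs of distinct labels, so
\[
\E\|g_{\pi(i)}-g_{\pi(j)}\|^2=\frac{1}{n(n-1)}\sum_{a\ne b}\|g_a-g_b\|^2=\frac{2n}{n-1}V,
\]
using \(\sum_{a,b}\|g_a-g_b\|^2=2n^2V\). Substituting back, the \(n-m\) unused terms cancel the matching portion of \(\E\nabla f_{\pi(i)}(\widetilde x_{i-1})\), and the \(m\) used terms contribute at most
\[
\frac{m}{n}\,L\eta\sqrt{\frac{2nV}{n-1}}.
\]
Finally, \(m\le n-1\) gives \(\frac mn\sqrt{\frac{n}{n-1}}\le\sqrt{\frac{n-1}{n}}\le1\), which yields \eqref{eq:coupling}.

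The step I expect to be the main obstacle is spotting the exchangeability coupling itself. Naive routes, such as comparing the conditional average over the remaining labels with the full average via Lipschitz bounds on \(\nabla f_j(\widetilde x)-\nabla f_j(a)\), lose factors of \(\sqrt n\) coming from \(\E\|Z_m\|\). The swap avoids this because it perturbs the reference point by only a single difference \(\eta(g_{\pi(i)}-g_{\pi(j)})\), whose size is \(O(\sqrt V)\) independent of \(n\). The remaining care point is checking that the transposition really fixes \(\widetilde x_{i-1}\) when \(j\ge i\), and that it modifies \(\widetilde x_{i-1}\) only by that single swapped summand when \(j\le m\).
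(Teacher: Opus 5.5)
Your proof is correct and is essentially the paper's argument. For a given $\pi$, your position transposition $\pi\circ(i\,j)$ is exactly the paper's label swap $T_{\pi(i),\pi(j)}$, and both proofs finish with Lipschitz continuity and $\sum_{a,b}\|g_a-g_b\|^2=2n^2V$. The only real difference is bookkeeping: you index by positions instead of conditioning on $\pi(i)$. This makes the exact cancellation on the unused positions visible, where the paper bounds every swap by $\eta\|g_r-g_s\|$ in either case, and it gives the slightly sharper position-dependent bound $\frac{i-1}{\sqrt{n(n-1)}}\,\eta L\sqrt{2V}$.
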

\paragraph{Interpretation.}
At position \(i\), the label \(\pi(i)\) is selected from the components not used in the first \(i-1\) positions.
Therefore the current component and the reference point \(\widetilde x_{i-1}\) are dependent.
The lemma quantifies the error in replacing the expected current component gradient by the expected full gradient at that same random point.
See Appendix~\ref{app:baseline-proofs-Lip-Hessian} for the proof.


\subsubsection{Taylor expansion around a deterministic point}

For \(m=0,\ldots,n-1\), define the deterministic point
\begin{equation}
a_m:=x-\eta m g.
\label{eq:mean-prefix-point}
\end{equation}
The reference iterate satisfies \(\widetilde x_m=a_m-\eta Z_m\).
Thus the only random displacement from \(a_m\) is the zero-mean vector \(-\eta Z_m\).
The next lemma states the resulting bound on the conditional mean of \(R^{\mathrm{ref}}\).
Its proof, including the Taylor expansion, is in Appendix~\ref{app:baseline-proofs-Lip-Hessian}.

\begin{lemma}[Conditional mean of the reference error]\label{lem:mean-A}
Define
\begin{align}
r_{\mathrm{lab}}
&:=
\sum_{m=0}^{n-1}
\left[
\E\nabla f_{\pi(m+1)}(\widetilde x_m)
-
\E\nabla F(\widetilde x_m)
\right],\\
r_{\mathrm{curv}}
&:=
\sum_{m=0}^{n-1}
\left[
\E\nabla F(\widetilde x_m)
-
\nabla F(a_m)
\right], \\
b&:=\sum_{m=0}^{n-1}\bigl[\nabla F(a_m)-g\bigr].
\label{eq:M-def}
\end{align}

Then
\begin{equation}
\E R^{\mathrm{ref}}
=b+r_{\mathrm{lab}}+r_{\mathrm{curv}},
\label{eq:EA-decomp}
\end{equation}
with
\begin{align}
\norm b
&\le\eta L\frac{n(n-1)}2\norm g,
\label{eq:M-bound}\\
\norm{r_{\mathrm{lab}}}
&\le n\eta L\sqrt{2V},
\label{eq:delta-bound}\\
\norm{r_{\mathrm{curv}}}
&\le\frac{\rho\eta^2n(n+1)}{12}V
\le\frac{\rho\eta^2n^2}{6}V.
\label{eq:q-bound}
\end{align}
\end{lemma}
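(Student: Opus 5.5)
The plan is to prove the decomposition \eqref{eq:EA-decomp} by a telescoping insertion, and then bound each of the three pieces separately using earlier results. Reindex $R^{\mathrm{ref}}$ with $m=i-1$, so that
\[
\E R^{\mathrm{ref}}=\sum_{m=0}^{n-1}\bigl[\E\nabla f_{\pi(m+1)}(\widetilde x_m)-g\bigr].
\]
Here we use $\E\nabla f_{\pi(m+1)}(x)=g$, because $\pi(m+1)$ is marginally uniform. In each summand, add and subtract $\E\nabla F(\widetilde x_m)$ and $\nabla F(a_m)$. This yields exactly $b+r_{\mathrm{lab}}+r_{\mathrm{curv}}$, so the identity is purely algebraic.

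\textbf{Bound on $b$.} Since $F$ is $L$-smooth (Lemma~\ref{lem:basic-sc-smooth}) and $\norm{a_m-x}=\eta m\norm g$, we get $\norm{\nabla F(a_m)-g}\le L\eta m\norm g$. Summing over $m=0,\dots,n-1$ gives \eqref{eq:M-bound}.

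\textbf{Bound on $r_{\mathrm{lab}}$.} Apply Lemma~\ref{lem:coupling} to each of the $n$ positions and sum. This gives $n\eta L\sqrt{2V}$ immediately.

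\textbf{Bound on $r_{\mathrm{curv}}$.} This is the step where Assumption~\ref{ass:hess} enters. Write $\widetilde x_m=a_m-\eta Z_m$ and use the integral form of Taylor's theorem:
\[
\nabla F(a_m-\eta Z_m)-\nabla F(a_m)
=-\eta\nabla^2F(a_m)Z_m
-\eta\int_0^1\bigl[\nabla^2F(a_m-t\eta Z_m)-\nabla^2F(a_m)\bigr]Z_m\,dt.
\]
Because $a_m$ is deterministic given $x$ and $\E Z_m=0$, the linear term has zero expectation. For the remainder, the Lipschitz-Hessian bound gives a norm of at most
\[
\eta\int_0^1\rho t\eta\norm{Z_m}^2\,dt=\frac{\rho\eta^2}{2}\norm{Z_m}^2 .
\]
Applying Jensen (norm of the expectation is at most the expectation of the norm) and summing, we obtain
\[
\norm{r_{\mathrm{curv}}}\le\frac{\rho\eta^2}{2}\sum_{m=0}^{n-1}\E\norm{Z_m}^2=\frac{\rho\eta^2}{2}\cdot\frac{n(n+1)}{6}V
\]
by \eqref{eq:centered-prefix-sum}. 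This is exactly $\frac{\rho\eta^2 n(n+1)}{12}V$. The weaker form follows from $n+1\le 2n$.

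\textbf{Main obstacle.} The only delicate point is the curvature term. One must use the exact finite-population identity for the centered prefixes, not a path-wise bound, and one must make sure that the linear Taylor term cancels. That cancellation requires the expansion point $a_m$ to be deterministic conditionally on the epoch start. Choosing $a_m$ as the expansion point, rather than $\widetilde x_m$ or $x$, is what makes this work. Everything else is direct substitution of Lemma~\ref{lem:coupling}, smoothness, and Lemma~\ref{lem:prefix}.
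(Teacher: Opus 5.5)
Your proposal is correct and follows the paper's proof essentially step for step. The paper likewise reindexes using the marginal identity and obtains the decomposition by adding and subtracting $\E\nabla F(\widetilde x_m)$ and $\nabla F(a_m)$; it bounds $b$ by smoothness of $F$, $r_{\mathrm{lab}}$ by summing Lemma~\ref{lem:coupling} over positions, and $r_{\mathrm{curv}}$ by the integral Taylor expansion around the deterministic point $a_m$ together with the exact identity \eqref{eq:centered-prefix-sum}.
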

\paragraph{Interpretation.}
The conditional mean of \(R^{\mathrm{ref}}\) is written as three terms: \(b\) is the deterministic change of the full gradient along the points \(x-\eta m g\); \(r_{\mathrm{lab}}\) is the error from dependence of the current label on the preceding labels; and \(r_{\mathrm{curv}}\) is the nonlinear Taylor remainder generated by the zero-mean fluctuation \(Z_m\).
Again see Appendix~\ref{app:baseline-proofs-Lip-Hessian} for the complete proof.


The deterministic points \(a_m=x-\eta m g\) are handled only with gradient smoothness.
Hessian regularity is used after subtracting these points, on the zero-mean displacement \(-\eta Z_m\).
This order of operations prevents the deterministic vector \(mg\) from entering the nonlinear Taylor remainder.

\subsubsection{One-epoch contraction}

\begin{lemma}[One-epoch recursion with a Lipschitz Hessian]\label{lem:lh-one-epoch}
Condition on an arbitrary epoch start \(x\).
Under \eqref{eq:stepsize},
\begin{align}
\E\bigl[\norm{x_n-x_\star}^2\mid x\bigr]
\le{}&
\left(1-\frac{\mu h}{2}\right)\norm e^2
+32\frac{L^2V}{\mu}n\eta^3
+3L^2Vn^3\eta^4
+\frac{4\rho^2}{9\mu}V^2n^3\eta^5.
\label{eq:lh-one-epoch}
\end{align}
\end{lemma}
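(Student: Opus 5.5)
We expand the square around the full-gradient step. By \eqref{eq:endpoint-R},
\[
x_n-x_\star=(e-hg)-\eta R,
\]
so
\[
\E\norm{x_n-x_\star}^2=\norm{e-hg}^2-2\eta\ip{e-hg}{\E R}+\eta^2\E\norm R^2 .
\]
Everything is conditional on \(x\). The plan is to control the deterministic part by \eqref{eq:interpolation}, the cross term by the sharpened mean bound, and the variance by \eqref{eq:R-second-bound}. The key is to let the negative \(\norm g^2\) term produced by the full-gradient step absorb every piece proportional to \(\norm g\) or \(\norm g^2\), so that only \(V\)-terms survive.

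\textbf{Step 1 (deterministic part).} By \eqref{eq:interpolation},
\[
\norm{e-hg}^2\le(1-\mu h)\norm e^2-h\Bigl(\tfrac1L-h\Bigr)\norm g^2\le(1-\mu h)\norm e^2-\tfrac{h}{2L}\norm g^2,
\]
using \(hL\le1/32\). This reserves a budget of \(\mu h/2\) on \(\norm e^2\) and \(h/(2L)\) on \(\norm g^2\).

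\textbf{Step 2 (mean of \(R\)).} Write \(\E R=b+r_{\mathrm{lab}}+r_{\mathrm{curv}}+\E R^{\mathrm{eval}}\) using Lemma~\ref{lem:mean-A}. Bound \(\norm{\E R^{\mathrm{eval}}}\) by Jensen and Lemma~\ref{lem:B}:
\[
\norm{\E R^{\mathrm{eval}}}\le L^2\eta^2\bigl(n^3\norm g+n^{5/2}\sqrt V\bigr).
\]
Use \(\norm{e-hg}\le\norm e\) (as \(h\le 1/L\)) and split the cross term piece by piece.

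\emph{The \(\norm g\)-pieces}, namely \(b\) and the \(n^3\norm g\) part of \(R^{\mathrm{eval}}\). These give at most \(\eta^2Ln^2\norm e\norm g\) up to constants. By Young's inequality this is at most
\[
\tfrac{\mu h}{8}\norm e^2+C\frac{L^2n^3\eta^3}{\mu}\norm g^2 .
\]
The second term is absorbed by \(h\norm g^2/(2L)\), since \(L^3n^2\eta^2/\mu\lesssim hL\cdot(L/\mu)\cdot hL\) is small under \eqref{eq:stepsize}. I expect this check to be the delicate point: the constant 32 in \eqref{eq:stepsize} must suffice. Alternatively, I can pair \(b\) with \(-hg\) inside the inner product \(\ip{e-hg}{b}\).

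\emph{The pieces \(r_{\mathrm{lab}}\) and \(\sqrt V\cdot R^{\mathrm{eval}}\).} These give
\[
2\eta\norm e\bigl(n\eta L\sqrt{2V}+L^2\eta^2n^{5/2}\sqrt V\bigr).
\]
By Young's inequality against \(\tfrac{\mu h}{8}\norm e^2\), this is at most
\[
\tfrac{\mu h}{8}\norm e^2+C\frac{L^2V}{\mu}n\eta^3+C\frac{L^4}{\mu}n^4\eta^5V .
\]
The last term is at most \(L^2Vn^3\eta^4\) up to constants, because \(nL^2\eta/\mu\le 1/32\).

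\emph{The piece \(r_{\mathrm{curv}}\).} This gives \(2\eta\norm e\,\rho\eta^2n^2V/6\). By Young's inequality it is at most
\[
\tfrac{\mu h}{8}\norm e^2+\frac{4\rho^2}{9\mu}V^2n^3\eta^5,
\]
which matches the stated constant exactly:
\[
\frac{(\rho\eta^3n^2V/3)^2}{\mu h/2}=\frac{2\rho^2\eta^5n^3V^2}{9\mu}\cdot 2 .
\]

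\textbf{Step 3 (variance).} By \eqref{eq:R-second-bound},
\[
\eta^2\E\norm R^2\le L^2\eta^4\bigl(n^4\norm g^2+n^3V\bigr).
\]
The \(\norm g^2\) part is \(\le h\norm g^2\cdot(hL)^2\cdot\tfrac1L\), so it is absorbed. The \(V\) part gives the \(L^2Vn^3\eta^4\) term.

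\textbf{Constants.} Allocating the \(\mu h\) contraction as \(\mu h/2\) kept plus three shares of \(\mu h/8\) gives the factor \((1-\mu h/2)\). Collecting constants, with the label term \(2\eta\norm e\,n\eta L\sqrt{2V}\) against \(\mu h/16\) yielding about \(32L^2Vn\eta^3/\mu\), should reproduce \eqref{eq:lh-one-epoch}. Adjusting the Young weights is the main routine obstacle, together with verifying the \(\norm g^2\) absorption noted above.
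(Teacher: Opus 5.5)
Your proposal is correct and follows essentially the paper's proof: the same exact endpoint expansion and interpolation inequality, the same decomposition $\E R=b+r_{\mathrm{lab}}+r_{\mathrm{curv}}+\E R^{\mathrm{eval}}$ via Lemmas~\ref{lem:mean-A}, \ref{lem:B} and~\ref{lem:path-energy}, Young's inequality against fractions of $\mu h\norm e^2$, and absorption of the $\norm g^2$ pieces into the negative term from the full-gradient step. The differences are only bookkeeping: you bound $\ip{e-hg}{\E R}$ using $\norm{e-hg}\le\norm e$ instead of splitting off $2\eta h\ip g{\E R}$ by Young (which also saves you a factor $2$ on $\eta^2\E\norm R^2$), and you absorb $b$ into the $\norm g^2$ budget rather than converting it to $h^2L^2\norm e^2$ via $\norm g\le L\norm e$. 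The absorption you flagged as delicate has ample room, since $h^2L^3/\mu\le 1/1024$ under \eqref{eq:stepsize}; your curvature display has a stray factor $2$, but the weight $\mu h/8$ gives $2\rho^2/(9\mu)$, which only improves the stated constant.
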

\paragraph{Interpretation.}
This is the sharpened one-epoch recursion.
Compared with Lemma \ref{lem:smooth-one-epoch}, the leading variance contribution is \(n\eta^3V\), rather than \(n^2\eta^3V\).
After summing over epochs, this is the factor-\(n\) improvement in the final theorem.
See Appendix~\ref{app:baseline-proofs-Lip-Hessian} for the proof.


Combining Lemma \ref{lem:confinement} and \ref{lem:lh-one-epoch} across epochs gives \cref{thm:lh-main} and Corollary \ref{cor:lh-rate}.
The complete proof, including the absorption of state-dependent terms, is in Appendix~\ref{app:baseline-proofs-Lip-Hessian}.

\section{Extensions: H\"older Hessians and all-epoch rates}
\label{sec:extensions}

This section changes the regularity or geometry assumptions while keeping the same estimates for the permutation partial sums and for the difference between \(x_{i-1}\) and \(\widetilde x_{i-1}\).
The bounds in Lemma \ref{lem:prefix}, \ref{lem:path-energy}, \ref{lem:B} and \ref{lem:coupling} use only component smoothness and \eqref{eq:path-stability}.
The new assumptions are used when those bounds are inserted into a one-epoch contraction inequality.

\subsection{H\"older continuity of the average Hessian}

\begin{assumption}[H\"older Hessian of the average]
\label{ass:holder-hess}
For some exponent \(\nu\in(0,1]\) and constant \(\rho_\nu\ge0\), the average objective is twice differentiable and
\begin{equation}
 \norm{\nabla^2F(u)-\nabla^2F(v)}_{\op}
 \le \rho_\nu\norm{u-v}^{\nu}
 \qquad\text{for all }u,v\in\R^d.
\label{eq:holder-hess}
\end{equation}
No H\"older condition is imposed on the component Hessians.
\end{assumption}

For later constants, set
\begin{equation}
        c_\nu:=\frac{2^{-(1+\nu)}}{1+\nu}.
\label{eq:c-nu}
\end{equation}

\begin{lemma}[H\"older bound for the centered gradient bias]
\label{lem:holder-curvature}
Suppose Assumption~\ref{ass:holder-hess} holds.
Conditional on an epoch start \(x\), let \(g=\nabla F(x)\), let \(V\) and \(Z_m\) be defined as in \eqref{eq:fixed-epoch-general} and \eqref{eq:centered-prefix}, and define \(a_m=x-\eta m g\).
Then
\begin{align}
&\left\lVert
\sum_{m=0}^{n-1}
\left(\E\nabla F(a_m-\eta Z_m)-\nabla F(a_m)\right)
\right\rVert                                                     \notag\\
&\hspace{3cm}\le
c_\nu\rho_\nu\eta^{1+\nu}
 n^{(3+\nu)/2}V^{(1+\nu)/2}.
\label{eq:holder-curvature-bias}
\end{align}
\end{lemma}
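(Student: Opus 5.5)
We prove a termwise bound for each $m$ and then sum over $m$ using the prefix moments of Lemma~\ref{lem:prefix}.

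\textbf{Step 1: Taylor expansion at the deterministic point.} Fix $m$ and write $u=-\eta Z_m$. Since $a_m$ is deterministic given $x$, the integral form of Taylor's theorem gives
\[
\nabla F(a_m+u)-\nabla F(a_m)-\nabla^2F(a_m)u=\int_0^1\bigl(\nabla^2F(a_m+tu)-\nabla^2F(a_m)\bigr)u\,dt .
\]
By \eqref{eq:holder-hess}, the norm of the right-hand side is at most $\rho_\nu\|u\|^{1+\nu}\int_0^1 t^\nu dt=\frac{\rho_\nu}{1+\nu}\|u\|^{1+\nu}$.

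\textbf{Step 2: the linear term drops out.} Because $\E Z_m=0$ and $\nabla^2F(a_m)$ is deterministic, taking expectations removes the linear term. Jensen's inequality for the norm then gives
\[
\norm{\E\nabla F(a_m-\eta Z_m)-\nabla F(a_m)}\le\frac{\rho_\nu\eta^{1+\nu}}{1+\nu}\E\|Z_m\|^{1+\nu}.
\]
This is exactly \eqref{eq:intro-holder-bias}.

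\textbf{Step 3: control the $(1+\nu)$-moments.} Since $1+\nu\le2$, Jensen's inequality (concavity of $s\mapsto s^{(1+\nu)/2}$) gives
\[
\E\|Z_m\|^{1+\nu}\le\bigl(\E\|Z_m\|^2\bigr)^{(1+\nu)/2}.
\]
Lemma~\ref{lem:prefix} with $g$ removed gives $\E\|Z_m\|^2=\frac{m(n-m)}{n-1}V$. By AM--GM, $m(n-m)\le n^2/4$, so
\[
\E\|Z_m\|^2\le\frac{n^2}{4(n-1)}V\le\frac n2 V \qquad (n\ge2).
\]
Applying the triangle inequality over the $n$ values of $m$ yields
\[
\text{LHS of \eqref{eq:holder-curvature-bias}}\le\frac{\rho_\nu\eta^{1+\nu}}{1+\nu}\,n\Bigl(\frac{nV}{2}\Bigr)^{(1+\nu)/2}
=\frac{2^{-(1+\nu)/2}}{1+\nu}\rho_\nu\eta^{1+\nu}n^{(3+\nu)/2}V^{(1+\nu)/2}.
\]
This has the correct scaling in $\eta$, $n$, and $V$, but its constant is $2^{-(1+\nu)/2}/(1+\nu)$ rather than $c_\nu=2^{-(1+\nu)}/(1+\nu)$.

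\textbf{Step 4 (main obstacle): sharpen the constant to $c_\nu$.} The crude bound $\frac{n^2}{4(n-1)}\le\frac n2$ is too loose. Two refinements should help.

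First, the summand at $m=0$ vanishes, since $Z_0=0$.

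Second, Hölder's inequality gives
\[
\sum_m\bigl(\E\|Z_m\|^2\bigr)^{(1+\nu)/2}\le n^{(1-\nu)/2}\Bigl(\sum_m\E\|Z_m\|^2\Bigr)^{(1+\nu)/2}.
\]
Combined with \eqref{eq:centered-prefix-sum}, this yields
\[
n^{(1-\nu)/2}\Bigl(\frac{n(n+1)}6V\Bigr)^{(1+\nu)/2}.
\]
Since $\frac{n+1}{6}\le\frac n4$ for $n\ge2$, this is at most $n^{(3+\nu)/2}\bigl(V/4\bigr)^{(1+\nu)/2}=2^{-(1+\nu)}n^{(3+\nu)/2}V^{(1+\nu)/2}$. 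Multiplying by the prefactor $\frac{\rho_\nu\eta^{1+\nu}}{1+\nu}$ gives exactly $c_\nu\rho_\nu\eta^{1+\nu}n^{(3+\nu)/2}V^{(1+\nu)/2}$.

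So I would run Step 3 with the Hölder-summation form in place of the termwise AM--GM bound; the only care needed is the elementary check $\frac{n+1}{6}\le\frac n4$ for $n\ge2$.
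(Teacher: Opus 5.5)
Your proposal is correct and, with the Hölder-summation bound of Step 4 in place of the termwise AM--GM bound of Step 3, it is the paper's proof. The paper also uses the integral Taylor remainder $\frac{\rho_\nu\eta^{1+\nu}}{1+\nu}\|Z_m\|^{1+\nu}$ at the deterministic point $a_m$, cancels the linear term via $\E Z_m=0$, and applies Lyapunov's inequality. It then uses concavity of $t\mapsto t^{(1+\nu)/2}$ (your Hölder step, with factor $n^{(1-\nu)/2}$), the identity $\sum_m\E\|Z_m\|^2=\frac{n(n+1)}{6}V$, and $n+1\le 3n/2$ for $n\ge 2$. Your Step 3 and the $m=0$ remark are unnecessary detours.
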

\paragraph{Interpretation.}
The lemma compares the full gradient at the deterministic point \(a_m\) with the expected full gradient at the random point \(a_m-\eta Z_m\).
H\"older continuity makes this difference proportional to \(\eta^{1+\nu}\E\|Z_m\|^{1+\nu}\).
The exact partial-sum identity in Lemma \ref{lem:prefix} then gives the displayed dependence on \(n\) and \(V\).
See Appendix~\ref{app:extension-proofs} for the proof.


\begin{remark}[The same calculation for another ordering]
\label{rem:abstract-transfer}
The Taylor calculation itself does not require a uniform permutation.
Suppose another ordering rule produces a random partial sum \(Q_m\) that can be written as \(Q_m=\bar q_m+Z_m\), where \(\bar q_m\) is deterministic after conditioning on the epoch start and \(\E Z_m=0\).
With \(a_m:=x-\eta\bar q_m\), expanding around \(a_m\) gives
\begin{equation}
 \left\lVert\sum_m
 \bigl(\E\nabla F(a_m-\eta Z_m)-\nabla F(a_m)\bigr)\right\rVert
 \le \frac{\rho_\nu\eta^{1+\nu}}{1+\nu}
       \sum_m\E\norm{Z_m}^{1+\nu}.
\label{eq:abstract-transfer}
\end{equation}
Thus smaller moments of \(Z_m\) reduce this curvature term.
This estimate is only one part of a convergence proof: one must also bound the dependence of the current component on the preceding labels and the difference between the actual iterate \(x_m\) and the corresponding reference iterate \(\widetilde x_m\).
\end{remark}

\subsection{Strong convexity with a H\"older Hessian}

The first extension weakens the Lipschitz-Hessian assumption in the original strongly convex theorem.

\begin{theorem}[Strongly convex H\"older-Hessian bound]
\label{thm:holder-sc}
Suppose Assumptions~\ref{ass:smooth}, \ref{ass:sc}, and~\ref{ass:holder-hess} hold and \eqref{eq:stepsize} is satisfied.
Then, for every \(K\ge1\),
\begin{align}
\E\norm{y_K-x_\star}^2
\le{}&
\exp\!\left(-\frac{\mu\eta T}{2}\right)D^2
+\frac{64L^2G^2}{\mu^2}\eta^2
+\frac{6L^2G^2}{\mu}n^2\eta^3                                  \notag\\
&+\frac{32c_\nu^2\rho_\nu^2G^{2+2\nu}}{\mu^2}
  n^{1+\nu}\eta^{2+2\nu}.
\label{eq:holder-sc-global}
\end{align}
\end{theorem}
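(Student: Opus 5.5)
The plan is to mirror the proof of \cref{thm:lh-main} exactly, changing only the curvature term in the conditional mean of the epoch error. Fix an epoch with start \(x\), \(\|x-x_\star\|\le D\) (guaranteed for every epoch by Lemma~\ref{lem:confinement}), and write \(x_n=x-hg-\eta R\) with \(R=R^{\mathrm{ref}}+R^{\mathrm{eval}}\).

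\textbf{Step 1: conditional mean.} Decompose \(\E R^{\mathrm{ref}}=b+r_{\mathrm{lab}}+r_{\mathrm{curv}}\) as in Lemma~\ref{lem:mean-A}. The bounds \eqref{eq:M-bound} on \(b\) and \eqref{eq:delta-bound} on \(r_{\mathrm{lab}}\) use only smoothness and Lemma~\ref{lem:coupling}, so they carry over unchanged. The only new ingredient is \(r_{\mathrm{curv}}\), which I would bound by Lemma~\ref{lem:holder-curvature}:
\[
\|r_{\mathrm{curv}}\|\le c_\nu\rho_\nu\eta^{1+\nu}n^{(3+\nu)/2}V^{(1+\nu)/2}.
\]
The second moments \(\E\|R\|^2\) from Lemma~\ref{lem:path-energy} and \(\E\|R^{\mathrm{eval}}\|^2\) from Lemma~\ref{lem:B} are also unchanged.

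\textbf{Step 2: one-epoch recursion.} Expand
\[
\E\|x_n-x_\star\|^2=\|e-hg\|^2-2\eta\ip{e-hg}{\E R}+\eta^2\E\|R\|^2 .
\]
Use \eqref{eq:interpolation} so that \(\|e-hg\|^2\le(1-\mu h)\|e\|^2-(h/L-h^2)\|g\|^2\). For each piece of the cross term apply Young's inequality \(2\eta|\ip{u}{v}|\le \tfrac{\mu h}{8}\|u\|^2+\tfrac{8\eta^2}{\mu h}\|v\|^2\) with a fraction of \(\mu h\) (the split must be identical to the Lipschitz case, where the piece \(\eta b\) gets absorbed by the negative \(\|g\|^2\) term). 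The curvature piece then contributes
\[
\frac{C\eta^2}{\mu h}\,c_\nu^2\rho_\nu^2\eta^{2+2\nu}n^{3+\nu}V^{1+\nu}
=\frac{C c_\nu^2\rho_\nu^2}{\mu}\,n^{2+\nu}\eta^{3+2\nu}V^{1+\nu}
\]
for a constant \(C\) fixed by that split. This gives the analogue of Lemma~\ref{lem:lh-one-epoch}, with the last term replaced by this expression.

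\textbf{Step 3: unroll.} Bound \(V\le G^2\) via \eqref{eq:fixed-bounds}, iterate the affine recursion with contraction \(1-\mu h/2\), and use \(\sum_k(1-\mu h/2)^k\le 2/(\mu h)=2/(\mu n\eta)\) together with \((1-\mu h/2)^K\le\exp(-\mu\eta T/2)\). Each additive term loses one factor of \(n\eta\) and gains \(2/\mu\). The curvature term thus becomes \(\tfrac{2C c_\nu^2\rho_\nu^2G^{2+2\nu}}{\mu^2}n^{1+\nu}\eta^{2+2\nu}\), and the other terms reproduce \eqref{eq:lh-global} verbatim.

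\textbf{Main obstacle.} The only delicate point is bookkeeping: the Young split must yield exactly the constant \(32\) in front of \(c_\nu^2\), while keeping enough of the \(\mu h\) contraction (at least \(\mu h/2\) in total) and of the negative \(\|g\|^2\) term to absorb \(b\) and the state-dependent \(n^4\|g\|^2\) pieces of \(\E\|R\|^2\). I would first reuse the Lipschitz-case allocation verbatim, substituting the Hölder bound for \(r_{\mathrm{curv}}\), and check that \(C=16\) results. Note that the Lipschitz case is the special case \(\nu=1\), where \(c_1=1/8\).
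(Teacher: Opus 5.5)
Your proposal is correct and is essentially the paper's proof. It reuses every line of Lemma~\ref{lem:lh-one-epoch}, replaces the bound on \(r_{\mathrm{curv}}\) by that of Lemma~\ref{lem:holder-curvature}, applies Young's inequality to that cross term with coefficient \(\mu h/16\) (so indeed \(C=16\)), and unrolls with the factor \(2/(\mu n\eta)\) to obtain the constant \(32\). One minor bookkeeping remark: in the paper the \(b\)-cross term is absorbed into the contraction through \(\|g\|\le L\|e\|\), which gives \(h^2L^2\|e\|^2\le \mu h\|e\|^2/32\); it is not absorbed by the negative \(\|g\|^2\) term, though either route works under \eqref{eq:stepsize}.
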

\paragraph{Interpretation.}
The first three terms are the same as in the Lipschitz-Hessian theorem.
Only the curvature term changes, from order \(n^2\eta^4\) to \(n^{1+\nu}\eta^{2+2\nu}\) after summing over epochs.
See Appendix~\ref{app:extension-proofs} for the proof.


\begin{corollary}[The half-H\"older threshold]
\label{cor:holder-sc-rate}
Under the horizon choice \eqref{eq:horizon-eta} and epoch condition \eqref{eq:epoch-condition},
\begin{align}
\E\norm{y_K-x_\star}^2
\le{}&\frac{D^2}{T^2}
+\frac{1024L^2G^2\log^2T}{\mu^4T^2}
+\frac{384L^2G^2n^2\log^3T}{\mu^4T^3}                           \notag\\
&+\frac{2^{7+2\nu}}{(1+\nu)^2}
\frac{\rho_\nu^2G^{2+2\nu}n^{1+\nu}\log^{2+2\nu}T}
     {\mu^{4+2\nu}T^{2+2\nu}}.
\label{eq:holder-sc-explicit}
\end{align}
In particular,
\begin{equation}
\E[F(y_K)-F(x_\star)]
=\wtO\!\left(
T^{-2}+n^2T^{-3}+n^{1+\nu}T^{-2-2\nu}
\right).
\label{eq:holder-sc-rate}
\end{equation}
If \(\nu\ge\tfrac12\), the final term is dominated by \(n^2T^{-3}\), and hence
\begin{equation}
\E[F(y_K)-F(x_\star)]
=\wtO\!\left(T^{-2}+n^2T^{-3}\right).
\label{eq:holder-half-rate}
\end{equation}
This \((n,K)\)-dependence is minimax-sharp up to logarithms in the constant-stepsize, last-iterate model because the class contains all quadratic instances.
\end{corollary}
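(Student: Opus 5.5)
The plan is to substitute the horizon stepsize \eqref{eq:horizon-eta} into \cref{thm:holder-sc}, which is assumed proved, after checking that this stepsize is admissible. First, \eqref{eq:stepsize} must hold. With \(\eta=4\log T/(\mu T)\) we get \(h=n\eta=4\log T/(\mu K)\), so \(h\le \mu/(32L^2)\) is equivalent to \(K\ge 128 (L^2/\mu^2)\log T\). This is exactly \eqref{eq:epoch-condition}, so \cref{thm:holder-sc} applies for every \(K\ge1\) satisfying the hypotheses.

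Next, bound each term of \eqref{eq:holder-sc-global}.
\begin{itemize}
\item The exponential term is \(\exp(-\mu\eta T/2)D^2=\exp(-2\log T)D^2=D^2/T^2\).
\item The \(\eta^2\) term becomes \(64L^2G^2/\mu^2\cdot 16\log^2T/(\mu^2T^2)=1024L^2G^2\log^2T/(\mu^4T^2)\).
\item The \(n^2\eta^3\) term becomes \(6L^2G^2 n^2/\mu\cdot 64\log^3T/(\mu^3T^3)=384 L^2G^2n^2\log^3T/(\mu^4T^3)\).
\item The H\"older term uses \(c_\nu^2=2^{-2-2\nu}/(1+\nu)^2\) from \eqref{eq:c-nu} and \(\eta^{2+2\nu}=4^{2+2\nu}\log^{2+2\nu}T/(\mu^{2+2\nu}T^{2+2\nu})\). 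The power-of-two prefactor is \(32\cdot 2^{-2-2\nu}\cdot 2^{4+4\nu}=2^{7+2\nu}\), and the \(\mu\)-power is \(\mu^{-2}\cdot\mu^{-2-2\nu}=\mu^{-4-2\nu}\). This gives exactly the last term of \eqref{eq:holder-sc-explicit}.
\end{itemize}

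For the objective rate \eqref{eq:holder-sc-rate}, use the upper bound in \eqref{eq:function-distance} from Lemma~\ref{lem:basic-sc-smooth}: \(\E[F(y_K)-F(x_\star)]\le \frac L2\E\|y_K-x_\star\|^2\). Then drop fixed parameters and logarithms.

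For the threshold claim \eqref{eq:holder-half-rate}, compare the two terms:
\[
\frac{n^{1+\nu}T^{-2-2\nu}}{n^2T^{-3}}=n^{\nu-1}T^{1-2\nu}=n^{\nu-1}(nK)^{1-2\nu}=n^{-\nu}K^{1-2\nu}.
\]
This is at most \(1\) whenever \(\nu\ge\tfrac12\), since \(n,K\ge1\). So the H\"older term is absorbed into \(n^2T^{-3}\).

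For the sharpness remark, note that every quadratic \(F\) has a constant Hessian, so it satisfies Assumption~\ref{ass:holder-hess} with \(\rho_\nu=0\). The Safran--Shamir lower bound \eqref{eq:intro-quadratic-lb} therefore applies within the class. Since \(T^{-2}+n^2T^{-3}=(nK)^{-2}+(nK^3)^{-1}\), the upper and lower bounds match up to logarithms.

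There is no real obstacle here. The only points needing care are that \eqref{eq:epoch-condition} is exactly the translation of \eqref{eq:stepsize}, and the exponent arithmetic in the H\"older constant.
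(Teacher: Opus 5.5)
Your proposal is correct and follows the paper's proof: substitute \(\eta=4\log T/(\mu T)\) into \eqref{eq:holder-sc-global} after noting that \eqref{eq:epoch-condition} is exactly \eqref{eq:stepsize}, pass to objective error via \eqref{eq:function-distance}, show \(n^{\nu-1}T^{1-2\nu}\le1\) for \(\nu\ge\tfrac12\), and use \(\rho_\nu=0\) for quadratic averages to compare with the Safran--Shamir lower bound. Your explicit constant bookkeeping, including \(32\cdot2^{-2-2\nu}\cdot2^{4+4\nu}=2^{7+2\nu}\), is correct and more detailed than the paper's one-line substitution.
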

\begin{figure}[t]
\centering
\begin{tikzpicture}[x=7.2cm,y=1.35cm]
  \draw[->] (0,1.8) -- (1.08,1.8) node[right] {$\nu$};
  \draw[->] (0,1.8) -- (0,4.25) node[above] {\(K\)-decay exponent};
  \draw (0,2) -- (1,4) node[above left] {$2+2\nu$};
  \draw[dashed] (0,3) -- (1.02,3) node[right] {$3$};
  \draw[dashed] (0.5,1.8) -- (0.5,3);
  \foreach \x/\lab in {0/0,0.5/{1/2},1/1}
    \draw (\x,1.76) -- (\x,1.84) node[below=3pt] {$\lab$};
  \foreach \y in {2,3,4}
    \draw (-0.012,\y) -- (0.012,\y) node[left=3pt] {$\y$};
  \node[align=center,anchor=west] at (0.54,2.48)
    {curvature term decays at least\\as fast as $K^{-3}$};
\end{tikzpicture}
\caption{
Ignoring logarithms and fixed parameters, the H\"older-curvature term is \(n^{-1-\nu}K^{-2-2\nu}\).
Its \(K\)-decay exponent reaches the exponent \(3\) of the quadratic-instance term \(1/(nK^3)\) at \(\nu=1/2\); for larger \(\nu\), it is higher order than that term.
}
\label{fig:holder-threshold}
\end{figure}

\subsection{Removing the large-epoch requirement under convex components}
\label{sec:all-k-holder}

The fixed-stepsize corollaries choose \(\eta=4\log(T)/(\mu T)\).
Their one-epoch analysis also requires \(n\eta\le\mu/(32L^2)\).
Substituting the horizon-tuned stepsize into this stability condition gives
\[
 K\ge128\kappa^2\log T.
\]
Thus, the large-epoch condition comes from using the same horizon-tuned stepsize from the first update onward.
When the components are convex, a per-update estimate remains valid for larger early steps.
We use a decreasing stepsize schedule so that this estimate controls the initial phase and our sharper centered-prefix estimate controls the later phase.

\begin{assumption}[Convex components and bounded iterates]
\label{ass:all-k-bounded}
Each \(f_i\) is convex.
For the varying-stepsize schedule below, assume that there is a compact set \(\mathcal X\subset\R^d\) containing \(x_\star\) and every iterate \(x_{k,i}\), uniformly over all epochs, inner positions, and realizations of the sampled permutations.
Since every \(\nabla f_i\) is continuous, the constant
\begin{equation}
 \bar G:=\max_{j\in[n]}\sup_{z\in\mathcal X}\|\nabla f_j(z)\|<\infty
\label{eq:all-k-Gbar}
\end{equation}
is well defined.
\end{assumption}

The following previously known estimate is used only during the initial phase.
\begin{lemma}[Per-update estimate of Ahn, Yun, and Sra]
\label{lem:ahn-per-iteration}
Under Assumptions~\ref{ass:smooth}, \ref{ass:sc}, and \ref{ass:all-k-bounded}, let \(x\) and \(x^+\) be two consecutive iterates of \RR{} and let the current stepsize be \(0<\gamma\le2/L\).
Then
\begin{equation}
 \E\|x^+-x_\star\|^2
 \le\left(1-\frac{\mu\gamma}{2}\right)\E\|x-x_\star\|^2
    +3\gamma^2\bar G^2+4\gamma^3\kappa L\bar G^2.
\label{eq:ahn-per-iteration}
\end{equation}
\end{lemma}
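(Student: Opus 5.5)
Write $x$ for the current iterate, $j=\pi_k(i)$ for the current label, and $x^+=x-\gamma\nabla f_j(x)$. Expanding the square gives
\[
\|x^+-x_\star\|^2=\|x-x_\star\|^2-2\gamma\ip{x-x_\star}{\nabla f_j(x)}+\gamma^2\|\nabla f_j(x)\|^2 .
\]
The last term is at most $\gamma^2\bar G^2$ by Assumption~\ref{ass:all-k-bounded}, since every iterate lies in $\mathcal X$. The difficulty is the cross term. Conditioned on the preceding labels of the epoch, the label $j$ is not uniform over $[n]$, so $\E\nabla f_j(x)\neq\E\nabla F(x)$. I would handle this as Ahn, Yun, and Sra do, by comparing with the epoch start $y_k$. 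Write
\[
\ip{x-x_\star}{\nabla f_j(x)}=\ip{x-x_\star}{\nabla f_j(x)-\nabla f_j(x_\star)}+\ip{x-x_\star}{\nabla f_j(x_\star)} .
\]
Convexity and $L$-smoothness of $f_j$ (co-coercivity) give
\[
\ip{x-x_\star}{\nabla f_j(x)-\nabla f_j(x_\star)}\ge \tfrac1L\|\nabla f_j(x)-\nabla f_j(x_\star)\|^2\ge0 .
\]
Strong convexity is not available componentwise, so the $\mu$-contraction has to come from the average over labels, and the position-dependence must be controlled.

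\textbf{Step 1.} Split $x-x_\star=(y_k-x_\star)+(x-y_k)$ in the second inner product. Since $y_k$ is fixed before the epoch and $\pi_k(i)$ is marginally uniform, $\E[\ip{y_k-x_\star}{\nabla f_j(x_\star)}\mid y_k]=\ip{y_k-x_\star}{\nabla F(x_\star)}=0$. The remainder satisfies $|\ip{x-y_k}{\nabla f_j(x_\star)}|\le\|x-y_k\|\bar G$, and the drift obeys $\|x-y_k\|\le (i-1)\gamma\bar G$.

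\textbf{Step 2.} Recover contraction. Using the convexity form $\ip{x-x_\star}{\nabla f_j(x)-\nabla f_j(x_\star)}\ge f_j(x)-f_j(x_\star)-\ip{\nabla f_j(x_\star)}{x-x_\star}$ together with the same $y_k$-comparison, average so that the $F$-gap appears. Then $\mu$-strong convexity of $F$ gives $F(x)-F(x_\star)\ge\frac\mu2\|x-x_\star\|^2$, and $\gamma\le2/L$ keeps the $\gamma^2$ terms nonpositive against the $1/L$ co-coercivity term.

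\textbf{Main obstacle.} The cross terms of order $\gamma\cdot i\gamma\bar G^2$ scale with the position $i$. A per-update bound with constant $3\gamma^2\bar G^2$ cannot come from this crude drift estimate. Instead I would Young-split $\ip{x-y_k}{\nabla f_j(x_\star)}$ against $\frac{\mu\gamma}{2}\|x-x_\star\|^2$. This produces a term $\gamma\mu^{-1}L^2\|x-y_k\|^2$-type, which I would bound by $\gamma^3\kappa L\bar G^2$ after controlling one-step variation. Concretely, compare $x$ with the previous iterate rather than with $y_k$, as in the cited per-update estimate.

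If this bookkeeping fails to close with the stated constants, I would simply invoke the corresponding lemma of \cite{ahn2020}, with their constants translated into $\bar G$. This is legitimate because the statement is explicitly presented as their previously known estimate.
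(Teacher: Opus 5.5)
Your fallback is the paper's proof, but the self-contained derivation you put first has a genuine gap. The paper does not derive this estimate: it imports Proposition~D.1 of Ahn, Yun, and Sra~\cite{ahn2020} and only checks that Assumption~\ref{ass:all-k-bounded} supplies the uniform gradient bound that proposition needs. It also notes that the swapped permutations used in their coupling proof are themselves possible \RR{} permutations, so those auxiliary trajectories stay in \(\mathcal X\) and \(\bar G\) applies along them. You should include that remark. The cited argument evaluates gradients along trajectories other than the realized one, and that is what makes ``translating the constants into \(\bar G\)'' legitimate.

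\textbf{Why the direct route fails.} Comparing with \(y_k\) is not what the cited proof does. With your drift bound it leaves an error \(2\gamma\abs{\ip{x-y_k}{\nabla f_j(x_\star)}}\le2(i-1)\gamma^2\bar G^2\). The \(\gamma\mu^{-1}L^2\norm{x-y_k}^2\) term from your Young split is of order \((i-1)^2\gamma^3\kappa L\bar G^2\). Both grow with the position \(i\), so no per-update constant can result.

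Comparing with the previous iterate does not help either. That iterate is built from \(\pi_k(1),\dots,\pi_k(i-2)\), which depend on \(\pi_k(i)\) through sampling without replacement. So \(\E\ip{x_{\mathrm{prev}}-x_\star}{\nabla f_{\pi_k(i)}(x_\star)}\) no longer vanishes; already for \(i=3\) it is a positive multiple of \(\sum_a\ip{\nabla f_a(y_k)}{\nabla f_a(x_\star)}\). You are then back to controlling exactly the dependence you set out to avoid. The same dependence blocks your Step~2, since \(\E f_{\pi_k(i)}(x)\neq\E F(x)\).

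\textbf{The missing idea} is a label-swap coupling. Write the gap between \(\E\ip{x-x_\star}{\nabla f_{\pi_k(i)}(x)}\) and \(\E\ip{x-x_\star}{\nabla F(x)}\) as an average over label pairs \((r,s)\) of differences between the conditional laws given \(\pi_k(i)=r\) and given \(\pi_k(i)=s\). Couple the two by swapping \(r\) and \(s\) in the permutation; the coupled trajectories then differ in at most one gradient step. Each \(f_j\) is convex and \(L\)-smooth and all stepsizes are at most \(2/L\), so each map \(z\mapsto z-\gamma\nabla f_j(z)\) is nonexpansive. Their separation therefore stays one step length (at most \(2\gamma\bar G\) for a constant within-epoch stepsize), however large \(i\) is.

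This propagation through nonexpansive steps, not co-coercivity at the single pair \((x,x_\star)\), is where component convexity and \(\gamma\le2/L\) are really used. It is why this lemma, unlike the paper's centered-prefix results, requires convex components. The remaining \(O(\gamma\bar G)\cdot L\norm{x-x_\star}\) mismatch, Young-split against \(\mu\gamma\norm{x-x_\star}^2\), is the natural source of the \(\gamma^3\kappa L\bar G^2\) term.
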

This is Proposition~D.1 in the supplementary material of Ahn, Yun, and Sra~\cite{ahn2020}.
Their Appendix~D explains why the standard bounded-iterates condition and component smoothness provide the finite constant \(\bar G\).

We now define the schedule.
Let \(\kappa=L/\mu\), fix \(\zeta>4\), and set
\begin{equation}
 q_0:=\zeta\kappa,
 \qquad q_k:=q_0+nk\quad(k\ge0).
\label{eq:all-k-q}
\end{equation}
The quantity \(q_k\) is a condition-number offset plus the number of component updates completed by the start of epoch \(k\).
During the first epoch, use
\begin{equation}
 \eta_{0,i}:=\frac{2\zeta}{\mu(q_0+i)},
 \qquad i=1,\ldots,n.
\label{eq:all-k-first-eta}
\end{equation}
For every later epoch \(k\ge1\), use one constant stepsize throughout that epoch:
\begin{equation}
 \eta_{k,i}:=\eta_k:=\frac{2\zeta}{\mu q_{k+1}},
 \qquad i=1,\ldots,n.
\label{eq:all-k-later-eta}
\end{equation}
The first-epoch steps decrease after every update.
Thereafter, the stepsize is constant within each epoch---as required by the centered-prefix analysis---and decreases from one epoch to the next.
For \(k\ge1\), we freeze the update-level \(1/t\)-type schedule at the end-of-epoch denominator \(q_{k+1}\).
Consequently, all \(n\) updates in epoch \(k\) use the same stepsize, as required by the centered-prefix one-epoch analysis.

There are two proof regimes.
If \(n\eta_k>\mu/(32L^2)\), the sharp centered-prefix recursion is not yet applicable, so we apply Lemma~\ref{lem:ahn-per-iteration} to the \(n\) updates separately.
Once \(n\eta_k\le\mu/(32L^2)\), we apply the H\"older one-epoch recursion derived in Appendix~\ref{app:extension-proofs}, equation~\eqref{eq:holder-sc-one-epoch}.
Both regimes lead to the same polynomial recursion after substituting \(\eta_k\asymp q_{k+1}^{-1}\).
A two-scale Chung-type lemma then sums that recursion over all epochs.

\begin{theorem}[All-epoch H\"older-Hessian bound]
\label{thm:all-k-holder}
Suppose Assumptions~\ref{ass:smooth}, \ref{ass:sc}, \ref{ass:holder-hess}, and~\ref{ass:all-k-bounded} hold.
Run \textnormal{\RR{}} with \eqref{eq:all-k-first-eta}--\eqref{eq:all-k-later-eta}.
Then there are constants \(C_0,C_1,C_2,C_\nu>0\), independent of \(n\) and \(K\), such that for every \(K\ge1\),
\begin{align}
 \E\|y_K-x_\star\|^2
 \le{}& C_0\left(\frac{q_0}{q_K}\right)^{\!\zeta}
          \|y_0-x_\star\|^2
       +\frac{C_1}{q_K^2}
       +\frac{C_2n^2}{q_K^3}
       +\frac{C_\nu n^{1+\nu}}{q_K^{2+2\nu}}.
\label{eq:all-k-holder-bound}
\end{align}
The constants depend only on \(\zeta,L,\mu,\rho_\nu,\nu\), and \(\bar G\).
\end{theorem}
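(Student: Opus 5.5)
The plan is to reduce everything to one epoch-level recursion of the form
\[
u_{k+1}\le\Bigl(1-\frac{\zeta n}{q_{k+1}}\Bigr)u_k+\frac{A_1n}{q_{k+1}^3}+\frac{A_2n^3}{q_{k+1}^4}+\frac{A_\nu n^{2+\nu}}{q_{k+1}^{3+2\nu}},
\qquad u_k:=\E\|y_k-x_\star\|^2,
\]
possibly with a slightly weaker contraction factor such as \(\exp(-\zeta n/q_{k+1})\) or \((q_k/q_{k+1})^{\zeta}\). Once this is in hand, a Chung-type summation gives \eqref{eq:all-k-holder-bound}. I would then check the two regimes described before the theorem.

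\textbf{Regime 1: \(n\eta_k>\mu/(32L^2)\), including the first epoch.} Apply Lemma~\ref{lem:ahn-per-iteration} \(n\) times. The steps satisfy \(\gamma\le 2\zeta/(\mu q_0)=2/L\). For the first epoch, with \(\gamma_i=2\zeta/(\mu(q_0+i))\), the factors are \(1-\zeta/(q_0+i)\). Their product over \(i\) is at most \(\prod_i (q_0+i-1)/(q_0+i)\) raised to the power \(\zeta\), which telescopes to \((q_0/q_1)^\zeta\). The additive terms are \(O(\gamma^2\bar G^2)=O(q^{-2})\). For later epochs the step is the constant \(\eta_k\), giving contraction \((1-\zeta/q_{k+1})^n\le (q_k/q_{k+1})^\zeta\), since \((1-\zeta/q_{k+1})\le (1-1/q_{k+1})^\zeta\) and \((1-1/q)^n\le 1-n/q\cdot\ldots\) needs care. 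I would instead bound \((1-\zeta/q_{k+1})^n\le e^{-\zeta n/q_{k+1}}\le(q_k/q_{k+1})^\zeta\) using \(\log(q_{k+1}/q_k)\le n/q_k\)... The direction is wrong here, so I would compare with \(e^{-\zeta n/q_{k+1}}\) directly and use the standard fact that \(\sum_j n/q_{j+1}\ge\log(q_{K}/q_{k+1})\), because \(n/q_{j+1}\ge\int_{q_{j+1}}^{q_{j+2}}dt/t\). This gives the product bound \((q_{k+1}/q_K)^\zeta\) up to a one-epoch shift, which is absorbed into \(C_0\) via \(q_{k+1}/q_k\le 1+n/q_0\)?

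That last ratio is unbounded in \(n\), so I would handle it using the regime condition. Regime 1 means \(n\lesssim q_{k+1}\kappa^{-2}\), hence \(q_{k+1}/q_k\le 1+n/q_k\le 1+O(\kappa^{-2}q_{k+1}/q_k)\), which is a bounded ratio. The additive terms \(n\eta_k^2\bar G^2\) are of the size \(n/q_{k+1}^2\), larger than \(n/q^3\) by a factor \(q\). However, in this regime \(q_{k+1}\lesssim n\kappa^2\), so \(n/q^2\lesssim \kappa^2n^2/q^3\), and the term is dominated by the \(n^2/q^3\) term that the final sum produces. In this way Regime 1 fits the target recursion with the \(A_2\)-type term enlarged by \(\kappa\)-factors.

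\textbf{Regime 2: \(n\eta_k\le\mu/(32L^2)\).} Use the Hölder one-epoch recursion \eqref{eq:holder-sc-one-epoch}, the analogue of Lemma~\ref{lem:lh-one-epoch}. It gives contraction \(1-\mu n\eta_k/2=1-\zeta n/q_{k+1}\) plus terms \(n\eta^3V\), \(n^3\eta^4V\), and \(n^{2+\nu}\eta^{3+2\nu}V^{1+\nu}\), with \(V\le\bar G^2\) replacing \(G\) through the bounded-iterates assumption. Substituting \(\eta_k\asymp q_{k+1}^{-1}\) yields the displayed recursion.

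\textbf{Summation.} Unroll the recursion. A term \(n^a q_{j+1}^{-b}\) at epoch \(j\) is multiplied by \(\prod_{l>j}(1-\zeta n/q_{l+1})\lesssim(q_{j+1}/q_K)^\zeta\). Because \(\zeta>4\ge b-1\) for all \(b\le4\), the sum \(\sum_j n\,q_{j+1}^{\zeta-b}\) is comparable to \(\int^{q_K}t^{\zeta-b}dt\approx q_K^{\zeta-b+1}\). Applied to \(A_1 n q^{-3}\), \(A_2n^3q^{-4}\), and \(A_\nu n^{2+\nu}q^{-3-2\nu}\), this produces \(q_K^{-2}\), \(n^2q_K^{-3}\), and \(n^{1+\nu}q_K^{-2-2\nu}\). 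The one-epoch shift between \(q_{j+1}\) and \(q_j\) costs only constants in Regime 2, since \(n/q\le\) a fixed constant there. The initial term gives \((q_0/q_K)^\zeta\).

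\textbf{Main obstacle.} I expect the main difficulty to be the bookkeeping between regimes. Specifically, I need the per-update bound in Regime 1 to match the same polynomial envelope despite its larger \(\gamma^2\) terms, and I need the products of contraction factors to telescope to \((q_j/q_K)^\zeta\) uniformly in \(n\). The first thing I would try is writing everything as a single recursion for \(q_k^\zeta u_k\), which turns the contraction into a telescoping sum.
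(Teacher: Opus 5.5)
Your architecture matches the paper's proof. You use the Ahn--Yun--Sra per-update estimate when \(n\eta_k>\mu/(32L^2)\) and the H\"older one-epoch recursion \eqref{eq:holder-sc-one-epoch} with \(V\le\bar G^2\) otherwise. Both give a common recursion with contraction \(\exp(-\zeta n/q_{k+1})\) and additive terms of orders \(nq^{-3}\), \(n^3q^{-4}\), \(n^{2+\nu}q^{-3-2\nu}\), which a Chung-type summation then handles.

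The genuine gap is that you never say how the first epoch's additive errors accumulate. Its steps run from \(2/L\) down to roughly \(2\zeta/(\mu q_1)\). If you sum its per-update errors with multipliers bounded by one, as you do for later epochs in your Regime~1, you get \(\sum_{i\le n}3\gamma_i^2\bar G^2\asymp\zeta^2\bar G^2/(\mu^2q_0)\). That does not decay in \(n\), whereas for \(K=1\) and \(n\gg\kappa\) the right side of \eqref{eq:all-k-holder-bound} is \(O(1/n)\).

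You must keep the intra-epoch contraction on each additive term, \(\prod_{l=i+1}^{n}(1-\zeta/(q_0+l))\le((q_0+i)/q_1)^{\zeta}\). This is your own telescoping, and it makes the first-epoch residual \(O(1/q_1)\); the paper gets the same bound from Lemma~\ref{lem:two-scale-recursion} with \(d=1\). After propagation this residual is of order \(q_1^{\zeta-1}/q_K^{\zeta}\), and it still has to be absorbed by a case split. If \(n\le q_0\), then \(q_1\le2q_0\) and the residual is at most \(2q_0/q_K^2\). If \(n>q_0\), then \(q_1<2n\), and \(\zeta>3\) bounds it by \(4n^2/q_K^3\). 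The \(q_1^{-2}\) residual is handled the same way. Your summation paragraph contains neither the contracted accumulation nor this absorption.

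Your treatment of the one-epoch shift is also wrong as written. Your Regime~1 is \(q_{k+1}<64\zeta\kappa^2n\), i.e.\ \(n\gtrsim q_{k+1}\kappa^{-2}\), which is the opposite of what you use. So your bound on \(q_{k+1}/q_k\) does not follow there.

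The correct reason is simpler and does not depend on the regime. Since \(q_k=q_0+nk\ge n\) for \(k\ge1\), we have \(q_{k+1}\le2q_k\) in every later epoch. This is exactly the hypothesis \(q_0\ge d\) of Lemma~\ref{lem:two-scale-recursion}, applied with initial denominator \(q_1\). The only unbounded ratio, \(q_1/q_0\), never hits the initial term. You use the exact product \((q_0/q_1)^\zeta\) for the first epoch, and afterwards \(\exp\bigl(-\zeta\sum_{l=1}^{K-1}n/q_{l+1}\bigr)\le(2q_1/q_K)^\zeta\).

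A minor point: the exponent \(b=3+2\nu\) can reach \(5\), not just \(4\). Summability still holds because \(\zeta>4\ge2+2\nu\).

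With these repairs your outline becomes the paper's proof.
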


\paragraph{How to read the bound.}
The theorem is valid for every \(K\ge1\); there is no condition of the form \(K\gtrsim\kappa^2\log T\).
Since \(q_K=\zeta\kappa+nK\), the denominators behave like powers of the total number of updates once \(nK\ge\zeta\kappa\).
For \(\nu\ge1/2\), the H\"older term is then dominated by \(n^2q_K^{-3}\), and the initialization term is of order \(q_K^{-2}\) after fixed problem parameters are suppressed.
This gives the following corollary.

\begin{corollary}[All-epoch bound and recovery of the two-term scale]
\label{cor:all-k-holder-rate}
Under the assumptions of Theorem~\ref{thm:all-k-holder}, if \(\nu\ge\tfrac12\), then
\begin{equation}
 \E\|y_K-x_\star\|^2
 =O\!\left(
   \left(\frac{q_0}{q_K}\right)^{\!\zeta}\|y_0-x_\star\|^2
   +q_K^{-2}+n^2q_K^{-3}
 \right)
 \qquad(K\ge1).
\label{eq:all-k-holder-sharp}
\end{equation}
If in addition \(T=nK\ge q_0\), then
\begin{equation}
 \E[F(y_K)-F(x_\star)]
 =O\!\left(T^{-2}+n^2T^{-3}\right).
\label{eq:all-k-holder-T-rate}
\end{equation}
Thus, when \(n\gtrsim\kappa\), the two-term scale already applies after a constant number of epochs.
The available matching lower bounds assume a constant stepsize, so \eqref{eq:all-k-holder-T-rate} does not establish minimax optimality over all varying schedules.
\end{corollary}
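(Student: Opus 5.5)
The statement to prove is Corollary~\ref{cor:all-k-holder-rate}. I would derive it directly from Theorem~\ref{thm:all-k-holder}, in two steps. First, compare the H\"older term with the \(n^2q_K^{-3}\) term. Second, convert distances to function values and \(q_K\) to \(T\).

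\textbf{Step 1: absorbing the H\"older term.} For \(\nu\ge\tfrac12\) I want
\[
\frac{n^{1+\nu}}{q_K^{2+2\nu}}\le \frac{n^2}{q_K^{3}},
\quad\text{equivalently}\quad
q_K^{\,2\nu-1}\ge n^{\nu-1}.
\]
Since \(q_K=\zeta\kappa+nK\ge n\ge1\), we have \(q_K^{2\nu-1}\ge1\) because \(2\nu-1\ge0\). Also \(n^{\nu-1}\le1\) because \(\nu\le1\). So the term \(C_\nu n^{1+\nu}q_K^{-2-2\nu}\) is at most \(C_\nu n^2q_K^{-3}\). Inserting this into \eqref{eq:all-k-holder-bound} gives \eqref{eq:all-k-holder-sharp}, with the \(O\)-constant \(\max\{C_0,C_1,C_2+C_\nu\}\). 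This constant depends only on the allowed parameters.

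\textbf{Step 2: the objective rate.} By \eqref{eq:function-distance} in Lemma~\ref{lem:basic-sc-smooth},
\[
\E[F(y_K)-F(x_\star)]\le\frac L2\E\|y_K-x_\star\|^2.
\]
If \(T\ge q_0\), then \(T\le q_K\le 2T\). Hence \(q_K^{-2}\le T^{-2}\) and \(n^2q_K^{-3}\le n^2T^{-3}\).

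For the initialization term I use \(\zeta>4\ge2\) and \(q_0/q_K\le1\):
\[
\Bigl(\frac{q_0}{q_K}\Bigr)^{\zeta}\le\Bigl(\frac{q_0}{q_K}\Bigr)^{2}\le \frac{q_0^2}{T^2}=\frac{\zeta^2\kappa^2}{T^2}.
\]
This is \(O(T^{-2})\) once fixed parameters and \(\|y_0-x_\star\|^2\) are suppressed, which gives \eqref{eq:all-k-holder-T-rate}.

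\textbf{Remaining remarks.} For the "constant number of epochs" remark, note that if \(n\ge q_0=\zeta\kappa\), then \(T\ge q_0\) already holds at \(K=1\). More generally it holds once \(K\ge\lceil\zeta\kappa/n\rceil\). The non-minimaxity caveat needs no proof; it is a scope statement.

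The only potential obstacle is Step 1 when \(n\) is large relative to \(q_K\). However, \(q_K\ge n\) always holds, so the inequality goes through uniformly in \(K\ge1\) and no extra regime split is needed.
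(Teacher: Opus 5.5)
Your proposal is correct and follows the paper's proof essentially line by line. Both use the ratio bound \(n^{\nu-1}q_K^{1-2\nu}\le 1\); the paper passes through \(q_K\ge n\) to get \(n^{-\nu}\), while you use \(q_K\ge 1\) and \(\nu\le 1\) directly. Both also use the sandwich \(T\le q_K\le 2T\), the bound \((q_0/q_K)^\zeta\le q_0^2q_K^{-2}\) from \(\zeta>2\), and the final appeal to \eqref{eq:function-distance}.
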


The complete proof is in Appendix~\ref{app:all-k-holder-proof}.

\section{Composite objectives and epoch-wise proximal reshuffling}
\label{sec:proximal}

In this section, we consider
\begin{equation}
    \min_{x\in\R^d}\;\mathcal P(x):=F(x)+\psi(x),
    \qquad
    F(x)=\frac1n\sum_{i=1}^n f_i(x),
\label{eq:composite-problem}
\end{equation}
where \(\psi:\R^d\to\R\cup\{+\infty\}\) is proper, closed, and convex.
We use \(\psi\), rather than \(h\), for the regularizer because \(h=n\eta\) already denotes the epoch stepsize.
The algorithm is the ``one-prox-per-epoch'' ProxRR method of Mishchenko, Khaled, and Richt\'arik \cite{mishchenko2022}:
\begin{align}
 x_{k,0}&=y_k,\notag\\
 x_{k,i}&=x_{k,i-1}-\eta\nabla f_{\pi_k(i)}(x_{k,i-1}),
             \qquad i=1,\ldots,n,\label{eq:proxrr-inner}\\
 y_{k+1}&=\operatorname{prox}_{h\psi}(x_{k,n}),
             \qquad h=n\eta,
\label{eq:proxrr-prox}
\end{align}
where
\[
\operatorname{prox}_{\tau\psi}(z)
:=\operatorname*{argmin}_{u}
\left\{\tau\psi(u)+\frac12\norm{u-z}^2\right\}.
\]
We use the standard nonexpansiveness property
\begin{equation}
\norm{\operatorname{prox}_{\tau\psi}(u)
      -\operatorname{prox}_{\tau\psi}(v)}
\le\norm{u-v}.
\label{eq:prox-nonexpansive}
\end{equation}

Throughout this section, each \(f_i\) is \(L\)-smooth but need not be convex, \(F\) is \(\mu\)-strongly convex and satisfies Assumption~\ref{ass:holder-hess}, and \(\psi\) is proper, closed, and convex.
The composite objective is therefore \(\mu\)-strongly convex and has a unique minimizer \(x^\dagger\).
Set
\begin{equation}
 g^\dagger:=\nabla F(x^\dagger),
 \qquad
 \beta_\star:=\norm{g^\dagger}.
\label{eq:composite-optimal-gradient}
\end{equation}
Composite optimality is equivalent to the proximal fixed-point identity
\begin{equation}
 -g^\dagger\in\partial\psi(x^\dagger),
 \qquad
 x^\dagger=\operatorname{prox}_{h\psi}(x^\dagger-hg^\dagger).
\label{eq:composite-fixed-point}
\end{equation}
Define
\begin{equation}
\begin{aligned}
 G_{\star,\mathrm c}
 &:=\max_{i\in[n]}\norm{\nabla f_i(x^\dagger)},\\
 D_{\mathrm c}
 &:=\max\left\{\norm{y_0-x^\dagger},
                  \frac{G_{\star,\mathrm c}}{2L}\right\},\\
 G_{\mathrm c}
 &:=G_{\star,\mathrm c}+2LD_{\mathrm c}.
\end{aligned}
\label{eq:composite-DG}
\end{equation}
We impose the stepsize condition
\begin{equation}
 h=n\eta\le\frac{\mu}{64L^2}.
\label{eq:prox-stepsize}
\end{equation}

The existing ProxRR analysis \cite{mishchenko2022} controls a shuffling radius that mixes centered gradient variance with \(n\norm{\nabla F(x^\dagger)}^2\).
Under convex components, the raw last-iterate objective result of Liu and Zhou \cite{liu2024} has, after suppressing fixed parameters and logarithms, the form
\begin{equation}
 \wtO\!\left(\frac{V_\star}{nK^2}
              +\frac{\beta_\star^2}{K^2}\right),
 \qquad
 V_\star:=\frac1n\sum_{i=1}^n
 \norm{\nabla f_i(x^\dagger)-g^\dagger}^2.
\label{eq:previous-prox-rate}
\end{equation}
Our result separates these two mechanisms: average curvature regularity improves the reshuffling contribution, but not the epoch-end splitting term \(\beta_\star^2/K^2\).

\begin{table}[!t]
\caption{\small
A summary of existing and new rates for epoch-wise ProxRR.
The metric is expected squared distance to the composite minimizer, except for the row marked $\ddagger$.
We set $T=nK$ and suppress logarithms and fixed problem parameters.
}
\centering
\begin{threeparttable}
\setlength{\tabcolsep}{3.8pt}
\renewcommand{\arraystretch}{1.14}
\footnotesize
\begin{tabularx}{\textwidth}{
|>{\raggedright\arraybackslash}p{0.25\textwidth}
|>{\raggedright\arraybackslash}p{0.22\textwidth}
|>{\centering\arraybackslash}X
|>{\centering\arraybackslash}p{0.19\textwidth}|}
\hline
Settings & References & Convergence rates & Assumptions\\
\hline\hline
$f_i$ smooth; $\psi$ convex
& Mishchenko et al.~\cite{mishchenko2022}
& $\widetilde O\!\left(\frac{V_\star}{nK^2}+\frac{\beta_\star^2}{K^2}\right)$
& each $f_i$ strongly convex\tnote{$\S$}\\
\hline
$f_i$ smooth convex; $\psi$ convex
& Liu and Zhou~\cite{liu2024}\tnote{$\ddagger$}
& $\widetilde O\!\left(\frac{V_\star}{nK^2}+\frac{\beta_\star^2}{K^2}\right)$
& raw last-iterate objective\\
\hline
\makecell[l]{$f_i$ smooth; average $\nu$-H\"older\\Hessian; $\psi$ convex}
& \cellcolor{LightGray}\textbf{Ours} (Cor.~\ref{cor:prox-rate})
& \cellcolor{LightGray}$\widetilde O\!\left(\frac{\beta_\star^2}{K^2}+T^{-2}+n^2T^{-3}\right)$
& \cellcolor{LightGray}$\nu\ge\tfrac12$, $K\gtrsim\kappa^2$\\
\hline
quadratic smooth part; $\ell_1$ regularizer
& \cellcolor{LightGray}\textbf{Ours} (Cor.~\ref{cor:prox-combined-lower})
& \cellcolor{LightGray}$\Omega\!\left(\frac{\beta_\star^2}{K^2}+T^{-2}+n^2T^{-3}\right)$ (LB)
& \cellcolor{LightGray}\makecell{constant stepsize;\\condition~\eqref{eq:prox-large-horizon-regime}}\\
\hline
\end{tabularx}
\begin{tablenotes}[flushleft]
\footnotesize
\item[$\S$] Mishchenko et al. also give a parallel result when the regularizer is strongly convex.
\item[$\ddagger$] Liu and Zhou measure the raw composite objective gap; the other rows use squared distance.
\item[] Here \(V_\star:=n^{-1}\sum_i\|\nabla f_i(x^\dagger)-\nabla F(x^\dagger)\|^2\) and \(\beta_\star:=\|\nabla F(x^\dagger)\|\).
\end{tablenotes}
\end{threeparttable}
\label{tab:prox-rates}
\end{table}
\FloatBarrier

\subsection{A proximal fixed-point recursion}

We use the standard pairwise interpolation inequality, which holds for arbitrary \(u,v\) and therefore does not require \(x^\dagger\) to minimize \(F\).

The purpose of this subsection is to compare one ProxRR epoch with the optimal proximal fixed point.
The smooth inner pass is analyzed before the proximal map is applied; nonexpansiveness then transfers that estimate to the post-proximal iterate.

\begin{lemma}[Pairwise interpolation]
\label{lem:pairwise-interpolation}
For every \(u,v\in\R^d\),
\begin{align}
 \ip{u-v}{\nabla F(u)-\nabla F(v)}
 &\ge
 \frac{\mu L}{\mu+L}\norm{u-v}^2
 +\frac{1}{\mu+L}\norm{\nabla F(u)-\nabla F(v)}^2              \notag\\
 &\ge
 \frac\mu2\norm{u-v}^2
 +\frac1{2L}\norm{\nabla F(u)-\nabla F(v)}^2.
\label{eq:pairwise-interpolation}
\end{align}
\end{lemma}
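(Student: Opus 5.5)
The plan is to prove the statement, Lemma~\ref{lem:pairwise-interpolation}, by reducing it to the standard co-coercivity inequality for smooth convex functions. The reduction uses the auxiliary function
\[
\phi(x):=F(x)-\frac{\mu}{2}\norm{x}^2 .
\]
By Assumption~\ref{ass:sc}, \(\phi\) is convex. By Lemma~\ref{lem:basic-sc-smooth}, \(F\) is \(L\)-smooth (as the average of \(L\)-smooth components), so \(\phi\) is \((L-\mu)\)-smooth. To see this, note that \(\nabla^2\)-free reasoning suffices: for convex \(\phi\), the inequality \(\phi(y)\le\phi(x)+\ip{\nabla\phi(x)}{y-x}+\frac{L-\mu}{2}\norm{y-x}^2\) follows directly from the \(L\)-upper quadratic bound of \(F\) combined with the exact quadratic \(\frac\mu2\norm{\cdot}^2\). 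The degenerate case \(\mu=L\) is handled separately: then \(\nabla F(u)-\nabla F(v)=\mu(u-v)\), and the first inequality can be checked directly as an equality.

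For \(\mu<L\), I would invoke the classical co-coercivity of a convex \(\ell\)-smooth function (Nesterov, Thm.~2.1.5), with \(\ell=L-\mu\):
\[
\ip{u-v}{\nabla\phi(u)-\nabla\phi(v)}\ge\frac{1}{L-\mu}\norm{\nabla\phi(u)-\nabla\phi(v)}^2 .
\]
Set \(\Delta:=u-v\) and \(\delta:=\nabla F(u)-\nabla F(v)\), and substitute \(\nabla\phi(u)-\nabla\phi(v)=\delta-\mu\Delta\). This gives
\[
(L-\mu)\bigl(\ip{\Delta}{\delta}-\mu\norm{\Delta}^2\bigr)\ge\norm{\delta}^2-2\mu\ip{\Delta}{\delta}+\mu^2\norm{\Delta}^2 .
\]
Collecting the \(\ip{\Delta}{\delta}\) terms yields \((L+\mu)\ip{\Delta}{\delta}\ge\mu L\norm{\Delta}^2+\norm{\delta}^2\). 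Dividing by \(\mu+L\) is exactly the first inequality. The only step needing care is justifying co-coercivity for a convex smooth function whose gradient Lipschitz constant came from a difference. If I wanted self-containment, I would prove co-coercivity by the usual argument: apply the descent lemma to \(\phi_v(x):=\phi(x)-\ip{\nabla\phi(v)}{x}\), which is minimized at \(v\), and symmetrize in \(u,v\).

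The second inequality then follows from two elementary estimates, using \(0<\mu\le L\). First, \(\frac{\mu L}{\mu+L}\ge\frac{\mu L}{2L}=\frac{\mu}{2}\). Second, \(\frac{1}{\mu+L}\ge\frac{1}{2L}\). Both coefficients in the middle expression therefore dominate those in the last one, and both multiply nonnegative quantities.

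I expect no real obstacle. The main subtlety is that no minimizer of \(F\) is used anywhere, so the inequality holds at \(x^\dagger\), which is the point of stating it pairwise. It is also worth noting that Lemma~\ref{lem:basic-sc-smooth} is the special case \(v=x_\star\).
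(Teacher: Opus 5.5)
Your proposal is correct and follows essentially the same route as the paper: subtract \(\frac{\mu}{2}\norm{\cdot}^2\), apply co-coercivity of the resulting convex \((L-\mu)\)-smooth function at the pair \((u,v)\), rearrange to obtain the first inequality, and get the second by comparing coefficients using \(0<\mu\le L\). The only minor difference is the case \(\mu=L\): you treat it directly via \(\nabla F(u)-\nabla F(v)=\mu(u-v)\), which holds because the shifted function is then affine, whereas the paper handles it by a continuity argument.
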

\paragraph{Interpretation.}
This is a standard result that follows from the same shifted-function co-coercivity argument as Lemma \ref{lem:basic-sc-smooth}; the proof is by applying the co-coercivity argument from Lemma \ref{lem:basic-sc-smooth} to the pair \((u,v)\), rather than to \((x,x_\star)\).

\begin{lemma}[Composite trajectory confinement]
\label{lem:prox-confinement}
If an epoch starts at \(x\) with \(\norm{x-x^\dagger}\le D_{\mathrm c}\), then every inner iterate satisfies
\begin{equation}
 \norm{x_i-x^\dagger}\le2D_{\mathrm c},
 \qquad
 \norm{\nabla f_j(x_i)}\le G_{\mathrm c},
\label{eq:prox-inner-bound}
\end{equation}
and the post-proximal output \(x^+:=\operatorname{prox}_{h\psi}(x_n)\) satisfies \(\norm{x^+-x^\dagger}\le D_{\mathrm c}\).
Thus these bounds hold in every epoch.
\end{lemma}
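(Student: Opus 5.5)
The plan is to follow the proof of Lemma~\ref{lem:confinement}, with $x^\dagger$ in place of $x_\star$ and nonexpansiveness \eqref{eq:prox-nonexpansive} added for the proximal step. There are two parts: a deterministic induction over the inner pass, then a fixed-point comparison for the post-proximal output. Everything is path-wise, so no expectations appear.

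\emph{Inner iterates.} We induct on $i$, with the hypothesis $\norm{x_j-x^\dagger}\le 2D_{\mathrm c}$ for all $j<i$.
\begin{itemize}
\item For any such $j$ and any component index $\ell$, smoothness gives $\norm{\nabla f_\ell(x_j)}\le G_{\star,\mathrm c}+L\norm{x_j-x^\dagger}\le G_{\star,\mathrm c}+2LD_{\mathrm c}=G_{\mathrm c}$.
\item Since $G_{\star,\mathrm c}\le 2LD_{\mathrm c}$ by the definition of $D_{\mathrm c}$, we have $G_{\mathrm c}\le 4LD_{\mathrm c}$.
\item Summing the updates gives $\norm{x_i-x}\le i\eta G_{\mathrm c}\le 4hLD_{\mathrm c}\le D_{\mathrm c}/16$, using \eqref{eq:prox-stepsize} and $\mu\le L$.
\item Hence $\norm{x_i-x^\dagger}\le D_{\mathrm c}+D_{\mathrm c}/16\le 2D_{\mathrm c}$, which closes the induction.
\end{itemize}
Applying the first bullet once more at every inner iterate gives the gradient bound in \eqref{eq:prox-inner-bound}.

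\emph{Post-proximal output.} Write $e:=x-x^\dagger$ and $R:=\sum_{j=1}^n[\nabla f_{\pi(j)}(x_{j-1})-\nabla f_{\pi(j)}(x)]$, so that $x_n=x-h\nabla F(x)-\eta R$. Each summand of $R$ has norm at most $L\norm{x_{j-1}-x}\le L h G_{\mathrm c}$, so
\[
\eta\norm{R}\le hLhG_{\mathrm c}\le 4L^2h^2D_{\mathrm c}.
\]
By the fixed-point identity \eqref{eq:composite-fixed-point} and nonexpansiveness,
\[
\norm{x^+-x^\dagger}\le\norm{x_n-(x^\dagger-hg^\dagger)}
\le\norm{e-h(\nabla F(x)-\nabla F(x^\dagger))}+\eta\norm R .
\]
Expand the square of the first term and apply Lemma~\ref{lem:pairwise-interpolation} with $(u,v)=(x,x^\dagger)$. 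Because $h\le 1/L$, the gradient-difference terms have a nonpositive total coefficient, so the square is at most $(1-\mu h)\norm e^2$. Therefore
\[
\norm{x^+-x^\dagger}\le(1-\mu h/2)\norm e+4L^2h^2D_{\mathrm c}.
\]

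\emph{Main obstacle.} The additive error $4L^2h^2D_{\mathrm c}$ does not scale with $\norm e$: it carries $G_{\star,\mathrm c}$, which need not vanish. The contraction alone therefore does not close the bound when $e$ is small. I would split into two cases.
\begin{itemize}
\item If $\norm e\ge D_{\mathrm c}/2$, then $4L^2h^2D_{\mathrm c}\le 8L^2h^2\norm e\le(\mu h/8)\norm e$ by \eqref{eq:prox-stepsize}. This gives $\norm{x^+-x^\dagger}\le(1-3\mu h/8)\norm e\le D_{\mathrm c}$.
\item If $\norm e<D_{\mathrm c}/2$, use the crude bound $\norm{x^+-x^\dagger}\le\norm e+hL\norm e+4L^2h^2D_{\mathrm c}$. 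This is at most $D_{\mathrm c}(1/2+1/128+1/1000)\le D_{\mathrm c}$.
\end{itemize}
In both cases the next epoch again starts in the radius-$D_{\mathrm c}$ ball. Induction over epochs then gives the bounds \eqref{eq:prox-inner-bound} in every epoch.
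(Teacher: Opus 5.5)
Your proof is correct and takes essentially the paper's approach: induction over the inner pass with $\norm{x_i-x}\le hG_{\mathrm c}\le D_{\mathrm c}/16$, then nonexpansiveness against the fixed point \eqref{eq:composite-fixed-point} and Lemma~\ref{lem:pairwise-interpolation} to reach $(1-\mu h/2)\norm e+O(L^2h^2D_{\mathrm c})$. The case split you flag as the main obstacle is correct but unnecessary: because $\norm e\le D_{\mathrm c}$, the bound is at most $(1-\mu h/2+4L^2h^2)D_{\mathrm c}\le D_{\mathrm c}$, since $4L^2h^2\le\mu h/16$ under \eqref{eq:prox-stepsize}, and this is exactly how the paper closes the argument.
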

\paragraph{Interpretation.}
The smooth inner pass and the post-epoch proximal step remain in a deterministic bounded region.
Thus the composite theorem, like the smooth theorem, does not assume bounded iterates or gradients separately.
See Appendix~\ref{app:proximal-proofs} for the proof.


Fix one epoch and condition on its starting point \(x\).
We use
\begin{equation}
 e:=x-x^\dagger,
 \qquad
 \bar g:=\nabla F(x)-\nabla F(x^\dagger),
 \qquad
 g:=\nabla F(x),
\label{eq:prox-error-defs}
\end{equation}
and
\begin{equation}
 R:=\sum_{i=1}^n
 \bigl[\nabla f_{\pi(i)}(x_{i-1})-\nabla f_{\pi(i)}(x)\bigr],
 \qquad
 V:=\frac1n\sum_{i=1}^n\|\nabla f_i(x)-g\|^2.
\label{eq:prox-fixed-epoch}
\end{equation}
The vector \(R\) is the smooth within-epoch error analyzed in Sections~4.4--4.6.
The next lemma combines those bounds with the proximal fixed-point comparison.

\begin{lemma}[One-epoch recursion for composite ProxRR]
\label{lem:prox-one-epoch}
Under \eqref{eq:prox-stepsize},
\begin{align}
 \E\bigl[\norm{x^+-x^\dagger}^2\mid x\bigr]
 \le{}&
 \left(1-\frac{\mu h}{2}\right)\norm e^2
 +9\frac{L^2}{\mu}h^3\beta_\star^2
 +32\frac{L^2V}{\mu}n\eta^3                                  \notag\\
 &+3L^2Vn^3\eta^4
 +\frac{16c_\nu^2\rho_\nu^2}{\mu}
   n^{2+\nu}\eta^{3+2\nu}V^{1+\nu}.
\label{eq:prox-one-epoch}
\end{align}
\end{lemma}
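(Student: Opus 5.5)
Using the fixed-point identity \eqref{eq:composite-fixed-point} and nonexpansiveness \eqref{eq:prox-nonexpansive}, I would write
\[
\norm{x^+-x^\dagger}\le\norm{x_n-(x^\dagger-hg^\dagger)}.
\]
By \eqref{eq:endpoint-R}, $x_n=x-hg-\eta R$, so the right-hand side equals $\norm{e-h\bar g-\eta R}$. Everything then reduces to the smooth one-epoch analysis with the gradient difference $\bar g$ in place of $g$. Expanding the square and taking conditional expectation gives
\[
\E\norm{x^+-x^\dagger}^2\le\norm{e-h\bar g}^2-2\eta\ip{e-h\bar g}{\E R}+\eta^2\E\norm R^2 .
\]
For the deterministic part, Lemma~\ref{lem:pairwise-interpolation} with $(u,v)=(x,x^\dagger)$ gives
\[
\norm{e-h\bar g}^2\le(1-\mu h)\norm e^2-\Bigl(\tfrac hL-h^2\Bigr)\norm{\bar g}^2 ,
\]
which is a $(1-\mu h)$ contraction plus a negative term of order $-\tfrac{h}{2L}\norm{\bar g}^2$. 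That negative term is reserved to absorb the state-dependent pieces below.

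For the cross term I would use the decomposition of Lemma~\ref{lem:mean-A}, with the curvature remainder replaced by Lemma~\ref{lem:holder-curvature}:
\[
\E R=b+r_{\mathrm{lab}}+r_{\mathrm{curv}}+\E R^{\mathrm{eval}} .
\]
All of these are functions of the epoch-start quantities $g=\nabla F(x)$ and $V$ and carry over unchanged. The difficulty is that $g=\bar g+g^\dagger$ no longer vanishes at the optimum. The terms involving $\norm g$ (namely $b$, and the $n^6\norm g^2$ and $n^4\norm g^2$ pieces of Lemmas~\ref{lem:B} and~\ref{lem:path-energy}) must be split via $\norm g^2\le2\norm{\bar g}^2+2\beta_\star^2$. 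For example, $\norm b\le\eta L n^2(\norm{\bar g}+\beta_\star)/2$. The cross term $2\eta\norm{e-h\bar g}\,\norm{b}$ is then handled by Young's inequality, putting $\tfrac{\mu h}{8}\norm e^2$ on one side and $\tfrac{C}{\mu h}\eta^2\norm b^2$ on the other. The $\beta_\star$ part gives $\tfrac{L^2}{\mu}h^{-1}\eta^4n^4\beta_\star^2\sim\tfrac{L^2}{\mu}h^3\beta_\star^2$, which produces the $9\tfrac{L^2}{\mu}h^3\beta_\star^2$ term. The $\bar g$ part gives $\tfrac{L^2h^3}{\mu}\norm{\bar g}^2$, absorbed into $-\tfrac h{2L}\norm{\bar g}^2$ because $h^2L^3/\mu\le L/64^2$ under \eqref{eq:prox-stepsize}. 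This is why the stepsize condition is halved compared with \eqref{eq:stepsize}.

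The remaining terms follow the Lipschitz-Hessian proof of Lemma~\ref{lem:lh-one-epoch}, with the curvature term replaced by the H\"older bound.
\begin{enumerate}[label=(\roman*)]
\item The label term $r_{\mathrm{lab}}$ of size $n\eta L\sqrt{2V}$, via Young, gives $\tfrac{\eta^2}{\mu h}n^2\eta^2L^2V\sim\tfrac{L^2V}{\mu}n\eta^3$.
\item The H\"older term $c_\nu\rho_\nu\eta^{1+\nu}n^{(3+\nu)/2}V^{(1+\nu)/2}$, squared and multiplied by $\eta^2/(\mu h)$, gives $\tfrac{c_\nu^2\rho_\nu^2}{\mu}n^{2+\nu}\eta^{3+2\nu}V^{1+\nu}$.
\item The $V$ parts of $\eta^2\E\norm R^2$ and of the evaluation correction give $3L^2Vn^3\eta^4$.
\end{enumerate}
The $\norm g^2$ parts of these last bounds again split into $\bar g$ pieces, absorbed by the negative term, and $\beta_\star^2$ pieces of order $L^2n^4\eta^4\beta_\star^2=L^2h^4\beta_\star^2$, which is at most $\tfrac{L^2}{\mu}h^3\beta_\star^2$ since $h\mu\le1$. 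Collecting everything, the Young allocations total at most $\tfrac{\mu h}{2}\norm e^2$, which yields \eqref{eq:prox-one-epoch}.

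I expect the main obstacle to be the constant bookkeeping. The negative $\norm{\bar g}^2$ budget has to cover every $\bar g$-dependent piece, including the cross term between $e$ and $h\bar g$ inside $\ip{e-h\bar g}{\E R}$. The $\beta_\star$ contributions also have to end up with exactly the factor $h^3$ and not something larger. I would first bound $\norm{e-h\bar g}\le\norm e$, which follows from the contraction estimate above, so that the cross term only requires Young against $\norm e^2$. This avoids having to track the interaction with $\bar g$ separately.
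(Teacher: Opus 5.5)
Your proposal is correct and follows essentially the same route as the paper. Both use nonexpansiveness and the fixed-point identity to reduce to \(\E\|e-h\bar g-\eta R\|^2\), pairwise interpolation for the deterministic part, and the four-term decomposition \(\E R=b+r_{\mathrm{lab}}+r_{\mathrm{curv}}^{(\nu)}+\E R^{\mathrm{eval}}\) with Young's inequality on each cross term, then split \(g=\bar g+g^\dagger\) and absorb the \(\|\bar g\|^2\) pieces into the negative interpolation term. The only differences are bookkeeping, and both versions close within the stated constants: you handle the cross term through \(\|e-h\bar g\|\le\|e\|\) rather than the paper's split \(2h\eta\langle\bar g,\E R\rangle\le h^2\|\bar g\|^2+\eta^2\E\|R\|^2\), and you absorb the \(\bar g\)-part of \(b\) into the negative gradient term instead of converting it to \(h^2L^2\|e\|^2\) via \(\|\bar g\|\le L\|e\|\).
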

\paragraph{Interpretation.}
The recursion contains the same reshuffling terms as the H\"older one-epoch recursion~\eqref{eq:holder-sc-one-epoch}, together with the additional term \(h^3\beta_\star^2\).
The extra term appears because \(\nabla F(x^\dagger)\) need not vanish: the nonsmooth regularizer balances it at the composite optimum.
See Appendix~\ref{app:proximal-proofs}.


\subsection{Upper bounds and objective certification}

\begin{theorem}[Composite ProxRR finite-horizon bound]
\label{thm:prox-global}
Under the assumptions of this section and \eqref{eq:prox-stepsize}, for every \(K\ge1\),
\begin{align}
 \E\norm{y_K-x^\dagger}^2
 \le{}&
 e^{-\mu\eta T/2}D_{\mathrm c}^2
 +\frac{18L^2}{\mu^2}h^2\beta_\star^2
 +\frac{64L^2G_{\mathrm c}^2}{\mu^2}\eta^2                    \notag\\
 &+\frac{6L^2G_{\mathrm c}^2}{\mu}n^2\eta^3
 +\frac{32c_\nu^2\rho_\nu^2G_{\mathrm c}^{2+2\nu}}{\mu^2}
   n^{1+\nu}\eta^{2+2\nu}.
\label{eq:prox-global}
\end{align}
\end{theorem}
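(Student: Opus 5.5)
The plan is to iterate the one-epoch recursion of Lemma~\ref{lem:prox-one-epoch} across epochs. The same template converts Lemmas~\ref{lem:lh-one-epoch} and~\ref{lem:smooth-one-epoch} into Theorems~\ref{thm:lh-main} and~\ref{thm:smooth-main}. First, Lemma~\ref{lem:prox-confinement} applied inductively shows that every epoch start \(y_k\) satisfies \(\norm{y_k-x^\dagger}\le D_{\mathrm c}\) deterministically. Hence every epoch's fixed-epoch variance obeys \(V\le G_{\mathrm c}^2\), since \(\norm{\nabla f_j(y_k)}\le G_{\mathrm c}\) and the centered second moment is at most the uncentered one. Substituting this into \eqref{eq:prox-one-epoch} makes all additive terms deterministic constants. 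The constants are \(9L^2h^3\beta_\star^2/\mu\), \(32L^2G_{\mathrm c}^2n\eta^3/\mu\), \(3L^2G_{\mathrm c}^2n^3\eta^4\), and \(16c_\nu^2\rho_\nu^2G_{\mathrm c}^{2+2\nu}n^{2+\nu}\eta^{3+2\nu}/\mu\). Taking total expectation via the tower property then gives an affine recursion \(a_{k+1}\le(1-\mu h/2)a_k+B\) for \(a_k:=\E\norm{y_k-x^\dagger}^2\).

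Unrolling gives \(a_K\le(1-\mu h/2)^Ka_0+B\sum_{j<K}(1-\mu h/2)^j\le e^{-\mu hK/2}a_0+2B/(\mu h)\). Since \(hK=\eta T\) and \(a_0\le D_{\mathrm c}^2\), the first term is exactly \(e^{-\mu\eta T/2}D_{\mathrm c}^2\). Dividing each piece of \(B\) by \(\mu h/2=\mu n\eta/2\) produces the remaining terms:
\begin{itemize}
\item the \(\beta_\star\) piece becomes \(18L^2h^2\beta_\star^2/\mu^2\);
\item the \(n\eta^3\) piece becomes \(64L^2G_{\mathrm c}^2\eta^2/\mu^2\);
\item the H\"older piece becomes \(32c_\nu^2\rho_\nu^2G_{\mathrm c}^{2+2\nu}n^{1+\nu}\eta^{2+2\nu}/\mu^2\).
\end{itemize}
These match \eqref{eq:prox-global}.

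The only step that needs care is the \(3L^2G_{\mathrm c}^2n^3\eta^4\) term. Dividing by \(\mu h/2\) gives \(6L^2G_{\mathrm c}^2n^2\eta^3/\mu\), which is exactly the displayed \(n^2\eta^3\) term with coefficient \(6/\mu\), so nothing is lost. I should still double-check that the constants in Lemma~\ref{lem:prox-one-epoch} hold under \eqref{eq:prox-stepsize} with \(V\) the per-epoch variance at the epoch start. That is the point where confinement must hold at every start and not only at \(y_0\), and the inductive use of Lemma~\ref{lem:prox-confinement} handles it. The argument has no real obstacle beyond bookkeeping. The main risk is verifying that the contraction factor \(1-\mu h/2\in(0,1)\), which follows from \(\mu h\le\mu^2/(64L^2)<1\), and keeping the geometric-sum bound \(\sum_j(1-\mu h/2)^j\le 2/(\mu h)\) consistent with the stated constants.
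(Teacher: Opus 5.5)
Your proposal is correct and is essentially the paper's own proof. In both, Lemma~\ref{lem:prox-confinement} gives \(V\le G_{\mathrm c}^2\) at every epoch start, Lemma~\ref{lem:prox-one-epoch} with the tower property yields the affine recursion with contraction \(1-\mu h/2\), and geometric summation with multiplier \(2/(\mu h)\) produces each displayed term with exactly the constants you computed.
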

\paragraph{Interpretation.}
Trajectory confinement makes the one-epoch constants uniform.
Applying Lemma \ref{lem:prox-one-epoch} conditionally at each epoch and summing the resulting affine contraction yields the theorem.
The first new term is the accumulated epoch-end splitting error; the remaining terms come from current-label dependence, the use of changing gradient-evaluation points within the epoch, and the Taylor remainder, exactly as in the smooth problem.
See Appendix~\ref{app:proximal-proofs}.

\begin{corollary}[Horizon-tuned composite rate]
\label{cor:prox-rate}
Let \(T=nK\ge3\), choose
\begin{equation}
 \eta=\frac{4\log T}{\mu T},
\label{eq:prox-horizon-eta}
\end{equation}
and suppose
\begin{equation}
 K\ge256\frac{L^2}{\mu^2}\log T.
\label{eq:prox-epoch-condition}
\end{equation}
Then
\begin{equation}
 \E\norm{y_K-x^\dagger}^2
 =\wtO\!\left(
 \frac{\beta_\star^2}{K^2}
 +T^{-2}+n^2T^{-3}+n^{1+\nu}T^{-2-2\nu}
 \right).
\label{eq:prox-rate-general}
\end{equation}
If \(\nu\ge\tfrac12\), this reduces to
\begin{equation}
 \E\norm{y_K-x^\dagger}^2
 =\wtO\!\left(
 \frac{\beta_\star^2}{K^2}
 +\frac1{(nK)^2}+\frac1{nK^3}
 \right).
\label{eq:prox-rate-sharp}
\end{equation}
\end{corollary}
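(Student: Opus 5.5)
The plan is to substitute the horizon-tuned stepsize \eqref{eq:prox-horizon-eta} into Theorem~\ref{thm:prox-global} and bound each of its five terms separately. First I check that the theorem applies. With \(\eta=4\log T/(\mu T)\) we have \(h=n\eta=4\log T/(\mu K)\). The stepsize condition \eqref{eq:prox-stepsize}, namely \(h\le\mu/(64L^2)\), is then equivalent to \(K\ge256(L^2/\mu^2)\log T\), which is exactly \eqref{eq:prox-epoch-condition}. So \eqref{eq:prox-global} holds for this choice. Lemma~\ref{lem:prox-confinement} makes \(D_{\mathrm c}\) and \(G_{\mathrm c}\) uniform over epochs, and both are fixed problem parameters independent of \(n\) and \(K\).

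Next I evaluate the five terms in order.
\begin{itemize}
\item The contraction term is \(e^{-\mu\eta T/2}D_{\mathrm c}^2=e^{-2\log T}D_{\mathrm c}^2=D_{\mathrm c}^2/T^2\).
\item The splitting term is \(\frac{18L^2}{\mu^2}h^2\beta_\star^2=\frac{288L^2\log^2T}{\mu^4}\cdot\frac{\beta_\star^2}{K^2}\). This produces the \(\beta_\star^2/K^2\) contribution.
\item The label-dependence term is \(\frac{64L^2G_{\mathrm c}^2}{\mu^2}\eta^2=\frac{1024L^2G_{\mathrm c}^2\log^2T}{\mu^4T^2}\).
\item The evaluation-point term is \(\frac{6L^2G_{\mathrm c}^2}{\mu}n^2\eta^3=\frac{384L^2G_{\mathrm c}^2n^2\log^3T}{\mu^4T^3}\).
\item The Hölder term is \(\frac{32c_\nu^2\rho_\nu^2G_{\mathrm c}^{2+2\nu}}{\mu^2}n^{1+\nu}\eta^{2+2\nu}\). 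It is a constant times \(n^{1+\nu}\log^{2+2\nu}T\,/\,T^{2+2\nu}\). This mirrors the computation in Corollary~\ref{cor:holder-sc-rate}.
\end{itemize}
Summing the five terms and suppressing logarithms and fixed parameters gives \eqref{eq:prox-rate-general}.

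For the reduction \eqref{eq:prox-rate-sharp}, I compare \(n^{1+\nu}T^{-2-2\nu}\) with \(n^2T^{-3}\). Their ratio is \(n^{\nu-1}T^{1-2\nu}=n^{-\nu}K^{1-2\nu}\le1\) when \(\nu\ge\tfrac12\), since \(n,K\ge1\). The Hölder term is therefore absorbed into \(n^2T^{-3}\). It remains to rewrite \(T^{-2}=(nK)^{-2}\) and \(n^2T^{-3}=1/(nK^3)\).

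No step here is a genuine obstacle, since the difficulty sits inside Theorem~\ref{thm:prox-global}. The only points needing care are bookkeeping. One is confirming that the constant \(256\) in \eqref{eq:prox-epoch-condition}, rather than the \(128\) used in the smooth case, is exactly what the stricter condition \eqref{eq:prox-stepsize} requires. The other is confirming that \(\beta_\star^2/K^2\) is expressed through \(h\) and not through \(\eta\), so that it does not gain a factor of \(n^{-2}\).
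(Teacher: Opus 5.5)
Your proposal is correct and follows the paper's proof: you check that \eqref{eq:prox-epoch-condition} is equivalent to \eqref{eq:prox-stepsize}, substitute $\eta=4\log T/(\mu T)$ into each term of Theorem~\ref{thm:prox-global}, and absorb the H\"older term using the ratio $n^{\nu-1}T^{1-2\nu}\le 1$ for $\nu\ge\tfrac12$. Your explicit constants are all correct, including $288L^2\log^2T\,\beta_\star^2/(\mu^4K^2)$ for the splitting term, and you correctly note that this term scales with $h^2$ and so does not pick up an extra $n^{-2}$.
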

\paragraph{Interpretation and proof idea.}
With \(\eta\asymp\log(T)/T\), the splitting term becomes \(\beta_\star^2/K^2\).
When \(\nu\ge\tfrac12\), the H\"older-curvature term is higher order than \(n^2T^{-3}\), leaving the three displayed leading terms.
See Appendix~\ref{app:proximal-proofs}.

The theorem is stated in squared distance because an extended-valued regularizer need not admit a local upper bound of the raw objective gap by \(\norm{y-x^\dagger}^2\).
One deterministic proximal-gradient step provides such a certificate.

\begin{corollary}[Certified composite objective]
\label{cor:prox-objective-certificate}
Let
\begin{equation}
 \widehat y_K
 :=\operatorname{prox}_{\psi/L}
 \left(y_K-\frac1L\nabla F(y_K)\right).
\label{eq:prox-certified-output}
\end{equation}
Then
\begin{equation}
 \mathcal P(\widehat y_K)-\mathcal P(x^\dagger)
 \le\frac L2\norm{y_K-x^\dagger}^2.
\label{eq:prox-objective-certificate}
\end{equation}
Thus the bounds of \cref{thm:prox-global} and Corollary \ref{cor:prox-rate} also hold for the expected certified objective gap after multiplication by \(L/2\).
\end{corollary}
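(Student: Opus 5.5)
The plan is the standard one-step certificate for proximal gradient, applied with the deterministic step \(1/L\) and anchored at \(x^\dagger\). Write \(y:=y_K\) and \(\widehat y:=\operatorname{prox}_{\psi/L}(y-\nabla F(y)/L)\). The argument is deterministic, so it holds for every realization, and taking expectations afterward gives the final claim.

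\textbf{Upper model.} By \(L\)-smoothness of \(F\) (Lemma~\ref{lem:basic-sc-smooth}), for every \(u\),
\[
F(u)\le F(y)+\ip{\nabla F(y)}{u-y}+\frac L2\norm{u-y}^2 .
\]
Hence \(\mathcal P(\widehat y)\le \Phi(\widehat y)\), where \(\Phi(u):=F(y)+\ip{\nabla F(y)}{u-y}+\frac L2\norm{u-y}^2+\psi(u)\).

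\textbf{Minimizer of the model.} The point \(\widehat y\) is exactly the minimizer of \(\Phi\), since the minimizer of \(\Phi\) coincides with that of \(\psi(u)+\frac L2\norm{u-(y-\nabla F(y)/L)}^2\). The function \(\Phi\) is \(L\)-strongly convex and proper, because \(\psi\) is proper, closed, and convex. Therefore, for the comparison point \(x^\dagger\),
\[
\Phi(\widehat y)\le \Phi(x^\dagger)-\frac L2\norm{x^\dagger-\widehat y}^2\le \Phi(x^\dagger).
\]
We will only need the weaker inequality \(\Phi(\widehat y)\le\Phi(x^\dagger)\). If \(\psi(x^\dagger)\) were infinite there would be nothing to prove, but it is finite since \(x^\dagger\) minimizes the proper function \(\mathcal P\).

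\textbf{Lower model.} Convexity of \(F\), which follows from its \(\mu\)-strong convexity, gives
\[
F(y)+\ip{\nabla F(y)}{x^\dagger-y}\le F(x^\dagger).
\]
Hence \(\Phi(x^\dagger)\le F(x^\dagger)+\psi(x^\dagger)+\frac L2\norm{x^\dagger-y}^2=\mathcal P(x^\dagger)+\frac L2\norm{y-x^\dagger}^2\).

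\textbf{Chaining.} Combining the three steps yields \(\mathcal P(\widehat y_K)-\mathcal P(x^\dagger)\le\frac L2\norm{y_K-x^\dagger}^2\) pathwise. Taking expectations and multiplying the bounds of Theorem~\ref{thm:prox-global} and Corollary~\ref{cor:prox-rate} by \(L/2\) gives the stated consequence.

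The only delicate point is the extended-valued \(\psi\). We need the prox to be well defined and single-valued, which holds for proper, closed, convex \(\psi\), and we need to avoid expressions like \(\infty-\infty\). Both are handled by evaluating \(\psi\) only at \(x^\dagger\), where it is finite, and at \(\widehat y\), which lies in the domain of \(\psi\).
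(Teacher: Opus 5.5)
Your proposal is correct and is essentially the paper's own proof. The smoothness upper model gives \(\mathcal P(\widehat y_K)\le\Phi(\widehat y_K)\), optimality of the proximal step gives \(\Phi(\widehat y_K)\le\Phi(x^\dagger)\), and convexity of \(F\) bounds the linearization at \(x^\dagger\) by \(F(x^\dagger)\); your added care about the finiteness of \(\psi(x^\dagger)\) and the well-definedness of the prox is sound but not needed beyond what the paper states.
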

\paragraph{Interpretation.}
For a general extended-valued regularizer, squared distance does not directly upper-bound the raw objective gap at \(y_K\).
One full proximal-gradient step minimizes the standard quadratic upper model of \(F+\psi\), which gives the displayed certificate.
See Appendix~\ref{app:proximal-proofs} for the proof.

\subsection{A matching proximal-splitting lower bound}

The term \(\beta_\star^2/K^2\) is not a permutation effect.
It remains when all components are identical, because an epoch of \(n\) gradient steps is not exactly one gradient step of size \(h=n\eta\) before the proximal map.

\begin{lemma}[Sequential-step ratio]
\label{lem:prox-ratio}
Let \(n\ge2\), \(a\in(0,1/2]\), and \(q=(1-a)^n\).
Then
\begin{equation}
 \frac{na}{1-q}-1\ge\frac18\min\{na,1\}.
\label{eq:prox-ratio}
\end{equation}
\end{lemma}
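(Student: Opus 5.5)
The plan is to reduce the inequality to a lower bound on $q=(1-a)^n$ from a truncated binomial expansion, and then split into the cases $na\le 9/8$ and $na>9/8$. Write $s:=na>0$ and note that $0<1-q\le\min\{s,1\}$ by Bernoulli's inequality. The left-hand side of \eqref{eq:prox-ratio} equals $(s-(1-q))/(1-q)$.

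The key tool is the third-order alternating bound
\[
(1-a)^n\ \ge\ 1-na+\binom n2a^2-\binom n3a^3,\qquad a\in[0,1].
\]
It follows from Taylor's theorem for $t\mapsto(1-t)^n$ around $0$ with a fourth-order Lagrange remainder. That remainder is $\binom n4(1-\xi)^{n-4}a^4\ge0$, and it vanishes identically when $n\le 3$. Factor the quadratic and cubic terms as
\[
\binom n2a^2\Bigl(1-\frac{(n-2)a}{3}\Bigr).
\]
Then use $(n-2)a/3\le s/3$ and $\binom n2a^2=\tfrac{n-1}{2n}s^2\ge s^2/4$ for $n\ge2$.

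\emph{Case $s\le 9/8$.} Here $s/3\le 3/8$, so
\[
q\ \ge\ 1-s+\tfrac{5}{8}\cdot\tfrac{s^2}{4}\ =\ 1-s+\tfrac{5}{32}s^2 .
\]
Hence $1-q\le s-\tfrac5{32}s^2$, which is positive in this range. It follows that
\[
\frac{s}{1-q}-1\ \ge\ \frac{1}{1-5s/32}-1\ \ge\ \frac{5s}{32}.
\]
If $s\le1$, this is at least $\tfrac18 s=\tfrac18\min\{s,1\}$. If $1<s\le 9/8$, it is at least $5/32>1/8=\tfrac18\min\{s,1\}$.

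\emph{Case $s>9/8$.} Since $1-q<1$, we get $s/(1-q)-1>s-1>1/8=\tfrac18\min\{s,1\}$.

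The only step needing care is the validity of the truncated expansion uniformly in $n\ge2$ together with the factor $(n-1)/n\ge 1/2$. Note that $a\le1/2$ is used only to ensure $a\le1$ and is not otherwise essential. The remaining steps are elementary.
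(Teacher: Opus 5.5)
Your proof is correct, but it takes a different route from the paper's.

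The paper rewrites the left-hand side using the geometric sum $S=(1-q)/a=\sum_{j=0}^{n-1}(1-a)^j$. The left-hand side then equals $(n-S)/S\ge (n-S)/n$, and the paper bounds the deficit $n-S=\sum_j\bigl(1-(1-a)^j\bigr)$ from below term by term:
\begin{itemize}
\item for $na\le 1$, it uses the second-order upper bound $(1-a)^j\le 1-ja+\binom j2 a^2$, giving $1-(1-a)^j\ge ja/2$ and hence $(n-S)/n\ge na/8$;
\item for $na>1$, it uses $1-(1-a)^j\ge 1-e^{-1/2}$ on the roughly $n/2$ indices $j\ge\lceil n/2\rceil$.
\end{itemize}

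You instead bound $q$ directly, in one step. Because you need an upper bound on $1-q$, you are forced to use an odd-order truncation, namely the third-order lower bound on $(1-a)^n$. The paper's geometric-sum reformulation is what lets it get by with the simpler second-order inequality. The price is a summation and a separate counting-plus-exponential argument when $na>1$.

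Your threshold $s=9/8$ is the nice point of your argument. It makes the large-$s$ case trivial via $s/(1-q)-1\ge s-1>1/8$, while the cubic correction stays harmless ($s/3\le 3/8$) on the whole small range. It also yields the slightly better constant $5/32$ there. The paper's form, by contrast, keeps the deficit $n-S$ explicit. That deficit is exactly the discrepancy between $n$ sequential steps and one step of size $n\eta$ that the lemma is meant to quantify.
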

\paragraph{Interpretation.}
The ratio measures the discrepancy between \(n\) sequential gradient steps of size \(\eta\) and one gradient step of size \(n\eta\).
The lemma gives a uniform lower bound on that discrepancy in both regimes \(n\mu\eta\le1\) and \(n\mu\eta>1\).
It follows from elementary bounds on a geometric series; see Appendix~\ref{app:proximal-proofs} for the proof.

\begin{theorem}[Unavoidable epoch-end splitting term]
\label{thm:prox-lower}
Fix \(n\ge2\), \(K\ge1\), \(\mu>0\), and \(\lambda\in(0,\mu]\).
There is a two-dimensional composite finite sum with \(L=\mu\), constant average Hessian, zero centered component variance, and \(\beta_\star=\lambda\), such that every constant stepsize satisfying \(0<\mu\eta\le1/2\) obeys
\begin{equation}
 \norm{y_K-x^\dagger}^2
 \ge c_{\mathrm{prox}}\frac{\lambda^2}{\mu^2K^2},
\label{eq:prox-lower-distance}
\end{equation}
for a universal \(c_{\mathrm{prox}}>0\).
On the same instance,
\begin{equation}
 \mathcal P(y_K)-\mathcal P(x^\dagger)
 =\frac\mu2\norm{y_K-x^\dagger}^2.
\label{eq:prox-lower-objective-equality}
\end{equation}
\end{theorem}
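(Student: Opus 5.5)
I will build the instance explicitly and compute the ProxRR iteration in closed form. In the first coordinate, take \(f_i(x)=\frac{\mu}{2}(x_1-c)^2\) for every \(i\), with \(c=\lambda/\mu>0\), and \(\psi(x)=\lambda' |x_1|\), with \(\lambda'\) tuned so that \(x^\dagger_1=0\). At \(x_1=0\) we have \(\nabla F=-\mu c=-\lambda\), and the optimality condition \(\lambda\in\lambda'\partial|0|=[-\lambda',\lambda']\) holds for \(\lambda'\ge\lambda\). To make the fixed point strictly interior to the dead zone while keeping \(\beta_\star=\lambda\), I will choose \(\lambda'\) as a fixed multiple of \(\lambda\) (or \(\lambda'=\lambda\) if that suffices). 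The second coordinate is the trivial quadratic \(\frac\mu2 x_2^2\) with \(\psi\) not acting on it. It exists only to make the instance two-dimensional (or to host an initial point) and contributes nothing. The components are identical, so the centered variance is zero, \(L=\mu\), and the Hessian is constant.

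The key step is computing one epoch. For identical quadratics, \(n\) steps of size \(\eta\) give the exact formula \(x_n-c=q(x_0-c)\), where \(q=(1-a)^n\) and \(a=\mu\eta\le1/2\). Starting at the fixed point \(x_0=0\), this gives \(x_n=c(1-q)\). The prox is soft-thresholding at level \(h\lambda'=n\eta\lambda'\). One epoch of exact proximal gradient from \(0\) would give \(hc\mu=h\lambda\), which the threshold \(h\lambda'\) exactly cancels when \(\lambda'=\lambda\). ProxRR instead produces \(c(1-q)\). By Lemma~\ref{lem:prox-ratio}, \(c(1-q)< n a c = h\lambda\), so the iterate undershoots. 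This suggests I should flip the picture: take the initial point \(y_0\) away from \(0\), or choose \(\lambda'\) slightly less than needed so that \(x^\dagger\neq 0\). The cleanest option is to let \(x^\dagger\) lie outside the dead zone, with \(\psi(x)=\lambda' x_1\) linear, or with \(\psi\) the indicator of \(\{x_1\le 0\}\), so that the prox is a shift or projection. With the indicator \(\psi=\iota_{\{x_1\le0\}}\) and \(c>0\), we get \(x^\dagger_1=0\) and \(\beta_\star=\lambda\). Every epoch from \(0\) goes to \(c(1-q)>0\) and is projected back to \(0\), so there is no error, and this choice fails. I will therefore use the linear choice \(\psi(x)=\lambda x_1\), which exposes the splitting error. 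Here \(\nabla F(x^\dagger)=-\lambda\) forces \(x^\dagger_1=c-\lambda/\mu=0\), and the prox is a shift by \(-h\lambda\). The epoch map is then affine: \(y_{k+1}=c-q(c-y_k)-h\lambda\).

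Its fixed point \(y_\infty\) satisfies \((1-q)(c-y_\infty)=h\lambda=na\,c\), so
\[
y_\infty=c\Bigl(1-\frac{na}{1-q}\Bigr),
\qquad
|y_\infty|=c\Bigl(\frac{na}{1-q}-1\Bigr)\ge\frac c8\min\{na,1\}
\]
by Lemma~\ref{lem:prox-ratio}. Starting from \(y_0=x^\dagger=0\), we get \(y_K=y_\infty(1-q^K)\), with \(y_\infty<0\).

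The main obstacle is lower-bounding \(|y_K|\) uniformly over all \(\eta\), which I will handle in two cases. If \(naK\ge1\), then \(q^K\le e^{-naK}\le e^{-1}\), so \(|y_K|\ge(1-e^{-1})\frac c8\min\{na,1\}\). If in addition \(na\ge1/K\), this gives \(|y_K|\gtrsim c/K\). If \(naK<1\), then \(1-q^K\ge 1-e^{-naK}\ge naK/2\) and \(\min\{na,1\}=na\), so \(|y_K|\gtrsim c\,(na)^2K\). This bound is small when \(na\) is tiny, which breaks uniformity, so this case needs a different approach. I plan to fix it by choosing \(y_0\) away from \(x^\dagger\), for instance \(y_0=x^\dagger+c\,e_2\)-type initial error in the second coordinate, or \(y_0\) with \(|y_0-x^\dagger|\asymp c\). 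In that case the contracted initial error satisfies \(q^K|y_0|\ge e^{-1}|y_0|\) whenever \(naK\le1\), which already exceeds \(c/K\). Putting the cases together gives \(\|y_K-x^\dagger\|^2\ge c_{\mathrm{prox}}\lambda^2/(\mu^2K^2)\). Finally, \eqref{eq:prox-lower-objective-equality} follows because \(\mathcal P\) is exactly quadratic with Hessian \(\mu I\), \(\psi\) is linear, and \(\nabla\mathcal P(x^\dagger)=0\).
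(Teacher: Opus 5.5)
Your final construction is correct and is in substance the paper's proof. Both use identical quadratic components, so the variance is zero, $L=\mu$, and the Hessian is $\mu I$. Both reduce the splitting coordinate to an exact affine epoch recursion whose solution from $y_0=x^\dagger$ is $-(\lambda/\mu)(1-q^K)\bigl(na/(1-q)-1\bigr)$, and both then apply Lemma~\ref{lem:prox-ratio} with a case split on $naK$. For small $naK$, both use an orthogonal coordinate whose contracted initial error does the work; this is the paper's $u$-coordinate with $u_0=1$.

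The one real difference is the regularizer. The paper takes $\psi=\lambda|v|$ with a shift $c>n\lambda/\mu$ and proves by induction that the iterates stay on the positive branch, where soft thresholding is exactly your shift by $-h\lambda$. Your linear $\psi(x)=\lambda x_1$ removes that invariance argument and the $n$-dependent shift, and it makes the objective identity hold globally. The paper's version has the advantage of exhibiting the splitting error for a genuinely nonsmooth ($\ell_1$) regularizer, which is the instance reused in Corollary~\ref{cor:prox-combined-lower}. Your dead-zone computation correctly explains why $x^\dagger$ must not sit at the kink.

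Two slips need fixing. First, the claim $q^K\ge e^{-1}$ whenever $naK\le1$ is false: with $n=2$, $K=1$, $\mu\eta=1/2$ you get $q^K=1/4<e^{-1}$. Use $1-a\ge 4^{-a}$ on $[0,1/2]$ instead, which gives $q^K\ge 4^{-naK}\ge 1/4$ and still suffices. Second, the initial error must go in the second coordinate, or in the first coordinate only with the same sign as $y_\infty<0$. If you set $y_{0,1}=x^\dagger_1+c$, then $y_{K,1}-x^\dagger_1=y_\infty(1-q^K)+q^Kc$. By continuity in $\eta$, this vanishes at some admissible stepsize with $naK\ge1$ (for instance $n=2$, $K=100$, $\mu\eta\approx 0.022$), so the lower bound would fail there. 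Your primary choice, an error of size $c$ in the trivial second coordinate, avoids this and completes the proof.
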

\paragraph{Interpretation.}
The construction has identical components, so there is no randomness and no centered permutation variance.
Nevertheless, applying the proximal map only at the end of an epoch creates an error of order \(1/K^2\).
Hence this term cannot be removed by a sharper reshuffling analysis.
See Appendix~\ref{app:proximal-proofs} for the proof.


\begin{corollary}[Combined composite lower bound]
\label{cor:prox-combined-lower}
Consider constant stepsizes in the stable range \(0<\mu\eta\le1/2\) and the raw last epoch iterate.
Let \(G_{\rm rr}\) and \(\lambda_{\rm rr}\) denote, respectively, the gradient-bound and strong-convexity parameters of the Safran--Shamir reshuffling block used in the construction, with \(G_{\rm rr}\ge6\lambda_{\rm rr}>0\).
Suppose
\begin{equation}
 \frac{G_{\rm rr}^2}{\lambda_{\rm rr}}
 \left(
 \frac1{(nK)^2}+\frac1{nK^3}
 \right)
 \le \lambda_{\rm rr}.
\label{eq:prox-large-horizon-regime}
\end{equation}
Then the class contains composite instances with a Lipschitz-continuous average Hessian such that
\begin{equation}
 \E\norm{y_K-x^\dagger}^2
 =\Omega\!\left(
 \frac{\beta_\star^2}{K^2}
 +\frac1{(nK)^2}+\frac1{nK^3}
 \right),
\label{eq:prox-combined-lower}
\end{equation}
up to fixed condition and heterogeneity parameters.
The same order can be obtained for the raw composite objective gap.
Thus \eqref{eq:prox-rate-sharp} is tight in its \((n,K)\)-dependence, up to logarithms, for squared distance when \(\nu\ge1/2\).
\end{corollary}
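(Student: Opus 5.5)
The plan is a direct-sum construction in \(\R^{1+1+2}\) (or block-separable coordinates). We place three independent blocks side by side, each with its own coordinate set, and use the zero regularizer on all blocks except the splitting block. Because \(F\), \(\psi\), the inner updates, and the proximal map all decouple across blocks, the ProxRR iterate in each block evolves exactly as the corresponding algorithm on that block alone. Hence \(\E\|y_K-x^\dagger\|^2\) is the sum of the block errors, and it suffices to lower bound each block separately under the same stepsize \(\eta\).

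The three blocks are as follows.
\begin{enumerate}[leftmargin=2.1em,label=(\alph*)]
\item The instance of Theorem~\ref{thm:prox-lower} with \(\lambda=\beta_\star\) (up to the scale \(\mu\)). It contributes at least \(c_{\mathrm{prox}}\beta_\star^2/(\mu^2K^2)\) deterministically for every \(\mu\eta\le1/2\).
\item The Safran--Shamir one-dimensional quadratic reshuffling block with parameters \(G_{\rm rr},\lambda_{\rm rr}\) and \(\psi\equiv0\). On this block ProxRR with zero regularizer is exactly constant-stepsize \RR{}, so their lower bound gives \(\E\|\cdot\|^2\gtrsim \min\{\cdot,\ \tfrac{G_{\rm rr}^2}{\lambda_{\rm rr}^2}(\tfrac1{(nK)^2}+\tfrac1{nK^3})\}\). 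Condition~\eqref{eq:prox-large-horizon-regime} is exactly what makes their truncation (the \(\min\) with a constant-order term) inactive. The requirement \(G_{\rm rr}\ge6\lambda_{\rm rr}\) matches their hypothesis.
\item If needed, a separate strongly convex quadratic block to cover the \(T^{-2}\) term uniformly over stepsizes. For instance, one supplies the \(1/(nK)^2\) piece when \(\eta\) is tiny via the non-contracted initial error \(e^{-\mu\eta T}\). This is already included in the Safran--Shamir bound, which covers all constant stepsizes in the stable range, so I would first check whether block (b) alone suffices and add (c) only otherwise.
\end{enumerate}

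All blocks are quadratic, so the average Hessian is constant, hence Lipschitz (and \(\nu\)-H\"older for every \(\nu\)). Smoothness and strong convexity constants of the sum are the max and min over blocks. We must check that \(\beta_\star=\|\nabla F(x^\dagger)\|\) of the combined instance equals that of block (a): in blocks with \(\psi\equiv0\), \(x^\dagger\) is the block minimizer of \(F\), so the gradient vanishes there.

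The main obstacle is the uniformity over \(\eta\). Both component lower bounds must hold for the same arbitrary \(\eta\) with \(\mu\eta\le1/2\), and the Safran--Shamir statement must be invoked in a form valid for every constant stepsize, not just a tuned one. I would restate their bound as \(\inf_{\eta}\) over the stable range, rescaling their stepsize range to ours via \(\lambda_{\rm rr}\le\mu\). Summing then gives \eqref{eq:prox-combined-lower}, since a sum of lower bounds dominates the max, which is comparable to the sum of the three terms. For the objective gap, every block is quadratic with \(\psi\) either zero or (block a) satisfying the equality \eqref{eq:prox-lower-objective-equality}. So \(\mathcal P(y_K)-\mathcal P(x^\dagger)\ge\frac{\mu_{\min}}2\|y_K-x^\dagger\|^2\) blockwise by strong convexity, which transfers the bound. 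Tightness of \eqref{eq:prox-rate-sharp} then follows by comparing with Corollary~\ref{cor:prox-rate}.
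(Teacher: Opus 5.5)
Your proposal is correct and follows the paper's proof. Both take a block-separable direct sum of the splitting instance of Theorem~\ref{thm:prox-lower} with the Safran--Shamir hard instance, keep $\beta_\star$ because the reshuffling block's gradient vanishes at its minimizer, and use condition~\eqref{eq:prox-large-horizon-regime} to make the truncation inactive; that hard instance already has its own direct-sum coordinates covering every constant stepsize, so your optional block (c) is unnecessary and the ``one-dimensional'' label should be dropped.

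The only cosmetic difference is the direction of the distance/objective conversion. You pass from the squared-distance bound to the objective gap via strong convexity of $\mathcal P$. The paper instead adds the Safran--Shamir objective bound to the splitting equality~\eqref{eq:prox-lower-objective-equality}, and obtains squared distance from smoothness of the quadratic block.
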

\paragraph{Interpretation.}
A Cartesian product combines the deterministic proximal-splitting instance with the classical quadratic reshuffling hard instance.
The algorithm and objective remain block-separable, so the squared distances and objective gaps add.
This matches all three leading terms in Corollary \ref{cor:prox-rate} up to logarithms.
See Appendix~\ref{app:proximal-proofs} for the proof.

The upper and lower bounds separate two effects.
The \(\beta_\star^2/K^2\) term is an epoch-level proximal-splitting error; it vanishes when \(g^\dagger=0\) but cannot be improved by better permutation moments.
The other two leading terms are genuine reshuffling errors and are improved by centering.
Relative to \cite{mishchenko2022,liu2024}, the upper theorem permits nonconvex components and needs H\"older continuity only of the average Hessian.
Its direct objective statement is for the certified output \eqref{eq:prox-certified-output}.
A raw last-iterate objective guarantee under the same weak component assumptions is not established here.

\section{Scope and limitations}
\label{sec:discussion}
\paragraph{Algorithms and outputs.}
The finite-horizon results in Theorems~\ref{thm:lh-main}, \ref{thm:smooth-main}, and~\ref{thm:holder-sc} use fresh independent reshuffling, a constant component stepsize, and the last epoch iterate \(y_K\).
The all-epoch result in Theorem~\ref{thm:all-k-holder} instead uses the decreasing schedule \eqref{eq:all-k-first-eta}--\eqref{eq:all-k-later-eta}, convex components, and the bounded-iterates condition in Assumption~\ref{ass:all-k-bounded}.
The composite result in Theorem~\ref{thm:prox-global} concerns the epoch-wise ProxRR method \eqref{eq:proxrr-inner}--\eqref{eq:proxrr-prox} and directly controls squared distance to \(x^\dagger\).

Corollary~\ref{cor:prox-objective-certificate} does not claim a direct bound on \(\mathcal P(y_K)-\mathcal P(x^\dagger)\).
The regularizer \(\psi\) may take the value \(+\infty\), so a small distance \(\|y_K-x^\dagger\|\) alone need not upper-bound the raw objective gap.
Instead, the corollary applies one full proximal-gradient step to \(y_K\), producing \(\widehat y_K\), and proves
\[
 \mathcal P(\widehat y_K)-\mathcal P(x^\dagger)
 \le \frac L2\|y_K-x^\dagger\|^2.
\]

\paragraph{Lower-bound scope.}
The sharpness claim in Corollary~\ref{cor:holder-sc-rate} compares its constant-stepsize, last-iterate upper bound with the quadratic lower bound \eqref{eq:intro-quadratic-lb} of Safran and Shamir~\cite{safran2020}.
It does not cover varying stepsizes, adaptive algorithms, or averaged outputs.
Theorem~\ref{thm:prox-lower} proves that the \(\beta_\star^2/K^2\) ProxRR splitting term is unavoidable even when all components are identical.
Corollary~\ref{cor:prox-combined-lower} combines that construction with the Safran--Shamir quadratic instance and matches the three leading terms in Corollary~\ref{cor:prox-rate}, up to logarithms and fixed problem parameters, under the constant-stepsize and large-horizon conditions stated there.

\paragraph{Epoch and parameter dependence.}
Corollaries~\ref{cor:lh-rate} and~\ref{cor:holder-sc-rate} require \(K\ge128(L/\mu)^2\log T\), while Corollary~\ref{cor:prox-rate} uses the slightly stronger requirement \(K\ge256(L/\mu)^2\log T\).
By contrast, Theorem~\ref{thm:all-k-holder} is valid for every \(K\ge1\), at the price of component convexity, varying stepsizes, and Assumption~\ref{ass:all-k-bounded}.
The displayed constants in these results are conservative in the condition number and in the problem-dependent gradient bounds; the theorems primarily target the dependence on \((n,K)\).

\paragraph{Global assumptions and probability mode.}
Theorems~\ref{thm:lh-main} and~\ref{thm:holder-sc} assume global component smoothness, global strong convexity of the average, and global regularity of the average Hessian.
Theorem~\ref{thm:prox-global} makes the corresponding assumptions for the smooth part of the composite objective.
The deterministic confinement results in Lemma~\ref{lem:confinement} and \ref{lem:prox-confinement} suggest that local assumptions on a sufficiently large invariant region may suffice.
All results are in expectation.
A high-probability extension would also need concentration for the dependent label and evaluation-point errors in Lemma~\ref{lem:coupling} and \ref{lem:B}.

\paragraph{Optimality status.}
For \(\nu\ge1/2\), Corollary~\ref{cor:holder-sc-rate} is sharp in its \((n,K)\)-dependence for constant stepsizes and the raw last epoch iterate, because its class contains the quadratic instances underlying \eqref{eq:intro-quadratic-lb}.
Corollary~\ref{cor:all-k-holder-rate} reaches the same scale with a varying schedule, but the cited lower bound does not apply to arbitrary varying schedules.
For \(\nu<1/2\), the upper bound in Corollary~\ref{cor:holder-sc-rate} contains the additional term \(n^{1+\nu}T^{-2-2\nu}\); this work does not prove that the term is necessary.
For ProxRR, Theorem~\ref{thm:prox-lower} establishes the splitting lower bound for all constant stepsizes in its stated range, while Corollary~\ref{cor:prox-combined-lower} gives the combined lower bound under the additional large-horizon condition \eqref{eq:prox-large-horizon-regime}.

\section{Conclusion}
In this paper, we analyze SGD with random reshuffling when the Hessian of the average objective is Lipschitz or H\"older continuous.
The key technical idea is to decompose each permutation prefix into a deterministic mean part and a centered finite-population fluctuation.
We expand the average gradient only after making this decomposition, so the linear term in the centered fluctuation vanishes in expectation.

Two additional errors must still be controlled.
First, the component used at a given position depends on the labels already used in that epoch; the label-transposition argument in Lemma~\ref{lem:coupling} controls this effect.
Second, the actual algorithm evaluates gradients at changing inner iterates, whereas the reference prefixes use gradients evaluated at the epoch start; Lemma~\ref{lem:B} controls the resulting evaluation-point error.

For a smooth strongly convex average with a Lipschitz Hessian, this yields the sharp last-iterate rate
\[
 \wtO\!\left((nK)^{-2}+(nK^3)^{-1}\right).
\]
A \(\nu\)-H\"older average Hessian contributes only \(\wtO(n^{1+\nu}T^{-2-2\nu})\), so \(\nu\ge1/2\) preserves the quadratic rate.
Under convex components and the bounded-iterates condition, the varying-stepsize analysis in Theorem~\ref{thm:all-k-holder} removes the lower bound on the number of epochs and reaches the same scale once the total number of component updates is comparable to the condition-number scale.

For epoch-wise ProxRR, nonexpansiveness reduces the post-proximal distance analysis to the same pre-proximal smooth epoch error.
The resulting rate contains an additional \(\beta_\star^2/K^2\) splitting term, and Theorem~\ref{thm:prox-lower} shows that this term is intrinsic to applying the common proximal map only once per epoch.
These results isolate finite-population centering and epoch-end splitting as distinct mechanisms that govern sharp reshuffling rates.

Natural directions for future work include H\"older-specific lower bounds for \(\nu<1/2\), high-probability guarantees, and all-epoch results that avoid a bounded-iterates assumption.

\section*{Statement on AI Usage}
Large language models (LLMs), including OpenAI's GPT-5.5 Pro and GPT-5.6 Pro, were used as interactive research assistants to explore proof strategies, check technical derivations, and improve the exposition.
The author reviewed the final mathematical statements and proofs and takes full responsibility for the content and any remaining errors.

\appendix

\section{Proofs for the baseline strongly convex results}
\label{app:baseline-proofs}

\subsection{Common deterministic and finite-population estimates}\label{app:baseline-proofs-shared}

\begin{proof}[\textbf{Proof of Lemma \ref{lem:basic-sc-smooth}}]
Averaging the component smoothness inequalities shows that \(\nabla F\) is \(L\)-Lipschitz; \eqref{eq:g-L-e} follows from \(\nabla F(x_\star)=0\).
For \(L>\mu\), apply co-coercivity to the convex, \((L-\mu)\)-smooth function \(G(z)=F(z)-\frac\mu2\norm z^2\).
With \(y=x_\star\), the inequality
\[
\ip{x-y}{\nabla G(x)-\nabla G(y)}
\ge \frac{1}{L-\mu}\norm{\nabla G(x)-\nabla G(y)}^2
\]
rearranges to the first line of \eqref{eq:interpolation}; the case \(L=\mu\) follows by continuity.
The second line uses \(0<\mu\le L\).
Finally, strong convexity gives the lower bound in \eqref{eq:function-distance}, and smoothness applied at \(x_\star\) gives its upper bound.
\end{proof}

\begin{proof}[\textbf{Proof of Lemma \ref{lem:confinement}}]
Fix an epoch start \(x\) with \(\norm{x-x_\star}\le D\).
Suppose that \(i\) is the first inner index leaving the ball of radius \(2D\).
Before that index, component smoothness and \eqref{eq:DG} imply \(\norm{\nabla f_j(x_q)}\le G\) for every component \(j\).
Hence \(\norm{x_i-x}\le i\eta G\le hG\le D/8\) under \eqref{eq:stepsize}, contradicting \(\norm{x_i-x_\star}>2D\).
The same estimate proves the gradient bound in \eqref{eq:inner-bound}.

For the epoch endpoint, let \(e=x-x_\star\) and \(g=\nabla F(x)\).
Adding and subtracting gradients at the epoch start gives
\begin{equation}
x_n-x_\star
=e-hg
+\eta\sum_{i=1}^n
\bigl[\nabla f_{\pi(i)}(x)-\nabla f_{\pi(i)}(x_{i-1})\bigr].
\label{eq:confinement-endpoint}
\end{equation}
Strong convexity and \eqref{eq:g-L-e} imply
\begin{equation}
\norm{e-hg}^2\le(1-\mu h)\norm e^2,
\qquad
\norm{e-hg}
\le\left(1-\frac{\mu h}{2}\right)\norm e,
\label{eq:gd-contraction-norm}
\end{equation}
where the second inequality uses \(\mu h\le1\).
The inner bounds also give
\begin{equation}
\eta\sum_{i=1}^n
\norm{\nabla f_{\pi(i)}(x)-\nabla f_{\pi(i)}(x_{i-1})}
\le\frac{h^2LG}{2}
\le2h^2L^2D.
\label{eq:confinement-error}
\end{equation}
Combining \eqref{eq:gd-contraction-norm} and \eqref{eq:confinement-error} gives
\[
 \|x_n-x_\star\|
 \le \left(1-\frac{\mu h}{2}\right)\|e\|+2h^2L^2D
 \le \left(1-\frac{\mu h}{2}+2h^2L^2\right)D
 \le D,
\]
where the last inequality follows from \eqref{eq:stepsize}.
Induction over epochs completes the proof.
\end{proof}

\begin{proof}[\textbf{Proof of Lemma \ref{lem:prefix}}]
Let \(z_i=g_i-g\), so \(\sum_i z_i=0\) and \(Z_m=\sum_{j=1}^m z_{\pi(j)}\).
If \(I_i\) indicates whether index \(i\) appears in the first \(m\) positions, then
\[
\E I_i=\frac mn,
\qquad
\E[I_iI_j]=\frac{m(m-1)}{n(n-1)}\quad(i\ne j).
\]
Using \(\sum_{i\ne j}\ip{z_i}{z_j}=-\sum_i\norm{z_i}^2\),
\begin{align*}
\E\norm{Z_m}^2
&=\frac mn\sum_i\norm{z_i}^2
 +\frac{m(m-1)}{n(n-1)}\sum_{i\ne j}\ip{z_i}{z_j}\\
&=\frac{m(n-m)}{n-1}\frac1n\sum_i\norm{z_i}^2.
\end{align*}
The cross term between \(mg\) and \(Z_m\) has zero expectation, proving \eqref{eq:prefix-exact}.
Summing \(\sum_{m=0}^{n-1}m^2=n(n-1)(2n-1)/6\) and \(\sum_{m=0}^{n-1}m(n-m)/(n-1)=n(n+1)/6\) gives the remaining identities.
\end{proof}

\begin{proof}[\textbf{Proof of Lemma \ref{lem:path-energy}}]
Within this proof, set \(\mathcal E_m:=\sum_{q=0}^{m-1}\E\norm{d_q}^2\), with \(\mathcal E_0=0\).
Cauchy--Schwarz and component smoothness give
\begin{equation}
\E\norm{R_m}^2
\le m\sum_{j=1}^m
\E\norm{\nabla f_{\pi(j)}(x_{j-1})-g_{\pi(j)}}^2
\le mL^2\mathcal E_m.
\label{eq:partial-error-bound}
\end{equation}
Since \(\mathcal E_{m+1}=\mathcal E_m+\E\norm{d_m}^2\), the exact path identity \eqref{eq:path-exact} and \(\norm{u+v}^2\le2\norm u^2+2\norm v^2\) imply
\begin{align}
\mathcal E_{m+1}
&\le \mathcal E_m
 +2\eta^2\E\norm{mg+Z_m}^2
 +2\eta^2\E\norm{R_m}^2                                      \notag\\
&\le(1+2\eta^2mL^2)\mathcal E_m
 +2\eta^2\E\norm{mg+Z_m}^2.
\label{eq:path-energy-recursion}
\end{align}
No independence between \(Z_m\) and \(R_m\) is used: their cross term is controlled by the deterministic two-vector inequality.

Iterating \eqref{eq:path-energy-recursion} from \(\mathcal E_0=0\) gives
\[
\mathcal E_n
\le2\eta^2\sum_{j=0}^{n-1}
\E\norm{jg+Z_j}^2
\prod_{\ell=j+1}^{n-1}(1+2\eta^2\ell L^2).
\]
The product is at most \(\exp(2\eta^2L^2\sum_{\ell=0}^{n-1}\ell) \le e^{h^2L^2}\).
Applying \eqref{eq:prefix-sum} yields
\[
\mathcal E_n
\le\frac23e^{h^2L^2}\eta^2
\bigl(n^3\norm g^2+n^2V\bigr).
\]
The prefactor is below one under \eqref{eq:path-stability}, proving \eqref{eq:path-energy-bound}.
Finally, \eqref{eq:partial-error-bound} at \(m=n\) gives \(\E\norm{R_n}^2\le nL^2\mathcal E_n\), which is \eqref{eq:R-second-bound}.
\end{proof}

\subsection{Merely smooth regime}\label{app:baseline-proofs-smooth}

\begin{proof}[\textbf{Proof of Lemma \ref{lem:smooth-mean}}]
Jensen's inequality and Lemma \ref{lem:path-energy} yield
\[
\norm{\E R}
\le\sqrt{\E\norm{R}^2}
\le L\eta\sqrt{n^4\norm{g}^2+n^3V}.
\]
Apply \(\sqrt{a+b}\le\sqrt a+\sqrt b\).
\end{proof}

\begin{proof}[\textbf{Proof of Lemma \ref{lem:smooth-one-epoch}}]
From \eqref{eq:endpoint-R}, \(e^+:=x_n-x_\star=e-hg-\eta R\).
Expanding exactly,
\begin{align}
\E\norm{e^+}^2
={}&\norm{e}^2-2h\ip{e}{g}+h^2\norm{g}^2
+\eta^2\E\norm{R}^2\notag\\
&-2\eta\ip{e}{\E R}
+2\eta h\ip{g}{\E R}.
\label{eq:exact-endpoint-expansion}
\end{align}
By \eqref{eq:interpolation} and
\begin{equation}
2\eta h\ip{g}{\E R}
\le h^2\norm{g}^2+\eta^2\norm{\E R}^2
\le h^2\norm{g}^2+\eta^2\E\norm{R}^2,
\label{eq:gR-young}
\end{equation}
we obtain
\begin{align}
\E\norm{e^+}^2
\le{}&(1-\mu h)\norm{e}^2
-h\left(\frac1L-2h\right)\norm{g}^2
+2\eta^2\E\norm{R}^2
+2\eta\norm{e}\norm{\E R}.
\label{eq:smooth-master}
\end{align}
The mean term is bounded using Lemma \ref{lem:smooth-mean}:
\begin{align}
2\eta\norm{e}\norm{\E R}
&\le2Lh^2\norm{e}\norm{g}
+2L\eta^2n^{3/2}\sqrt V\norm{e}\notag\\
&\le2L^2h^2\norm{e}^2
+\frac{\mu h}{8}\norm{e}^2
+\frac{8L^2}{\mu}n^2\eta^3V.
\label{eq:smooth-mean-cross}
\end{align}
For the second inequality we used \eqref{eq:g-L-e} and Young's inequality \(ca\le\lambda a^2+c^2/(4\lambda)\) with \(\lambda=\mu h/8\) for the variance term.
Also, Lemma \ref{lem:path-energy} gives
\begin{equation}
2\eta^2\E\norm{R}^2
\le2L^2h^4\norm{g}^2+2L^2n^3\eta^4V.
\label{eq:smooth-R-second-in-recursion}
\end{equation}

It remains to absorb the state-dependent terms.
Set
\begin{equation}
s:=hL,
\qquad
r:=\frac{\mu}{L}.
\label{eq:sr}
\end{equation}
Under \eqref{eq:stepsize}, \(s\le r/32\).
After division by \(h/L\), the coefficient of \(\norm g^2\) is
\begin{equation}
-(1-2s)+2s^3<0,
\label{eq:smooth-g-absorb}
\end{equation}
so it can be discarded.
The remaining positive state correction is at most \(3\mu h\norm e^2/16\), which is smaller than half of the contraction margin.
Substitution into \eqref{eq:smooth-master} proves \eqref{eq:smooth-one-epoch}.
\end{proof}

\begin{proof}[\textbf{Proof of \cref{thm:smooth-main}}]
By Lemma \ref{lem:confinement}, every epoch starts inside the radius-\(D\) ball.
Conditional on \(y_k\), Lemma \ref{lem:smooth-one-epoch} applies to the fresh independent permutation.
Since \(V\le G^2\), tower expectation yields
\begin{align}
a_{k+1}
\le{}&\left(1-\frac{\mu h}{2}\right)a_k
+\frac{8L^2G^2}{\mu}n^2\eta^3
+2L^2G^2n^3\eta^4,
\label{eq:smooth-global-recursion}
\end{align}
where \(a_k:=\E\norm{y_k-x_\star}^2\).
Let \(\theta=1-\mu h/2\).
Iterating \eqref{eq:smooth-global-recursion} gives
\begin{equation}
a_K\le\theta^KD^2+\frac{1-\theta^K}{1-\theta}b,
\label{eq:geometric-generic}
\end{equation}
where \(b\) is its additive term.
The elementary bounds
\begin{equation}
\theta^K\le\exp\!\left(-\frac{\mu\eta T}{2}\right),
\qquad
\frac{1-\theta^K}{1-\theta}\le\frac{2}{\mu n\eta}
\label{eq:geometric-bounds}
\end{equation}
then give \eqref{eq:smooth-global}.
\end{proof}

\begin{proof}[\textbf{Proof of Corollary \ref{cor:smooth-rate}}]
The epoch condition implies \eqref{eq:stepsize}, and the horizon choice makes the initialization term \(D^2/T^2\).
Substitution into \eqref{eq:smooth-global} gives \eqref{eq:smooth-cor-explicit}.
Since \(n^2/T^3\le n/T^2\), the stated rate follows; use \eqref{eq:function-distance} for objective error.
\end{proof}

\subsection{Lipschitz-Hessian regime}\label{app:baseline-proofs-Lip-Hessian}

\begin{proof}[\textbf{Proof of Lemma \ref{lem:B}}]
The actual iterate uses the gradients \(\nabla f_{\pi(j)}(x_{j-1})\) for \(j<i\), whereas \(\widetilde x_{i-1}\) uses the same component labels but evaluates those gradients at \(x\).
Subtracting their definitions and using the definition of \(R_{i-1}\) gives
\[
x_{i-1}-\widetilde x_{i-1}=-\eta R_{i-1}.
\]
Component smoothness therefore implies
\[
\norm{R^{\mathrm{eval}}}
\le\eta L\sum_{m=0}^{n-1}\norm{R_m}.
\]
Cauchy--Schwarz, \eqref{eq:partial-error-bound}, and monotonicity of \(\mathcal E_m:=\sum_{q=0}^{m-1}\E\norm{d_q}^2\) give
\begin{align*}
\E\norm{R^{\mathrm{eval}}}^2
&\le\eta^2L^2n\sum_{m=0}^{n-1}\E\norm{R_m}^2\\
&\le\eta^2L^4n\sum_{m=0}^{n-1}m\mathcal E_m
\le\frac12\eta^2L^4n^3\mathcal E_n.
\end{align*}
Substituting the bound \eqref{eq:path-energy-bound} for \(\mathcal E_n\) proves \eqref{eq:B-bound}.
\end{proof}

\begin{proof}[\textbf{Proof of Lemma \ref{lem:coupling}}]
Fix the position \(i\).
For \(r\in[n]\), let \(\mathcal P_r\) be the uniform distribution on permutations conditioned on \(\pi(i)=r\), and let \(\E_r\) denote expectation under \(\mathcal P_r\).
Write
\[
 W_\pi:=x-\eta\sum_{j=1}^{i-1}g_{\pi(j)}
       =\widetilde x_{i-1}.
\]
Since every label is marginally uniform at position \(i\), conditioning on that label gives
\begin{align}
 \E\nabla f_{\pi(i)}(W_\pi)
 &=\frac1n\sum_{r=1}^n \E_r\nabla f_r(W_\pi),
\label{eq:coupling-first-expectation}\\
 \E\nabla F(W_\pi)
 &=\frac1{n^2}\sum_{r=1}^n\sum_{s=1}^n
     \E_s\nabla f_r(W_\pi).
\label{eq:coupling-second-expectation}
\end{align}
Consequently, with \(\Delta_i:=\E\nabla f_{\pi(i)}(W_\pi)-\E\nabla F(W_\pi)\),
\begin{equation}
 \Delta_i=\frac1{n^2}\sum_{r=1}^n\sum_{s=1}^n
 \left[\E_r\nabla f_r(W_\pi)-\E_s\nabla f_r(W_\pi)\right].
\label{eq:coupling-double-sum}
\end{equation}

Fix \(r\ne s\).
If \(\pi\sim\mathcal P_r\), let \(\tau=T_{r,s}\pi\) be obtained by swapping the labels \(r\) and \(s\) throughout the permutation.
Then \(\tau(i)=s\), and \(T_{r,s}\) is an involutive bijection from the support of \(\mathcal P_r\) to that of \(\mathcal P_s\).
Because both conditional laws are uniform,
\begin{equation}
 \E_s\nabla f_r(W_\pi)=\E_r\nabla f_r(W_{T_{r,s}\pi}).
\label{eq:coupling-change-of-variables}
\end{equation}

It remains to compare the two evaluation points.
Under \(\mathcal P_r\), label \(r\) occupies position \(i\) and hence does not occur among the first \(i-1\) labels.
If label \(s\) also occurs after position \(i\), the first \(i-1\) labels are unchanged by the swap and \(W_\pi=W_{T_{r,s}\pi}\).
If \(s\) occurs before position \(i\), the swap replaces the single vector \(g_s\) in the preceding sum by \(g_r\), so
\[
 W_{T_{r,s}\pi}=W_\pi-\eta(g_r-g_s).
\]
In either case,
\begin{equation}
 \|W_\pi-W_{T_{r,s}\pi}\|\le\eta\|g_r-g_s\|.
\label{eq:coupling-point-distance}
\end{equation}
Using \eqref{eq:coupling-change-of-variables}, Jensen's inequality, and the \(L\)-Lipschitz continuity of \(\nabla f_r\), we obtain
\begin{equation}
 \left\|\E_r\nabla f_r(W_\pi)-\E_s\nabla f_r(W_\pi)\right\|
 \le \eta L\|g_r-g_s\|.
\label{eq:coupling-pairwise-bound}
\end{equation}
The same inequality is trivial for \(r=s\).
Applying the triangle inequality to \eqref{eq:coupling-double-sum} and then Cauchy--Schwarz gives
\begin{align*}
 \|\Delta_i\|
 &\le \frac{\eta L}{n^2}\sum_{r,s}\|g_r-g_s\|\\
 &\le \eta L\left(\frac1{n^2}\sum_{r,s}\|g_r-g_s\|^2\right)^{1/2}.
\end{align*}
Finally, if \(z_r=g_r-g\), then \(\sum_r z_r=0\), and hence
\[
 \frac1{n^2}\sum_{r,s}\|g_r-g_s\|^2
 =\frac1{n^2}\sum_{r,s}\|z_r-z_s\|^2
 =\frac2n\sum_{r=1}^n\|z_r\|^2
 =2V.
\]
Substitution proves \eqref{eq:coupling}.
\end{proof}

\begin{proof}[\textbf{Proof of Lemma \ref{lem:mean-A}}]
From the definition of \(R^{\mathrm{ref}}\), the marginal identity \(\E g_{\pi(m+1)}=g\), and the change of index \(m=i-1\),
\begin{equation}
 \E R^{\mathrm{ref}}
 =\sum_{m=0}^{n-1}
  \left[\E\nabla f_{\pi(m+1)}(\widetilde x_m)-g\right].
\label{eq:mean-reference-start}
\end{equation}
For each summand, add and subtract \(\E\nabla F(\widetilde x_m)\) and \(\nabla F(a_m)\).
The definitions in the statement of the lemma then give the exact identity
\[
 \E R^{\mathrm{ref}}=r_{\mathrm{lab}}+r_{\mathrm{curv}}+b.
\]
Thus it remains only to bound the three explicitly defined vectors.

Lemma~\ref{lem:coupling}, applied at position \(m+1\), and the triangle inequality give
\[
 \|r_{\mathrm{lab}}\|
 \le\sum_{m=0}^{n-1}
 \left\|\E\nabla f_{\pi(m+1)}(\widetilde x_m)
       -\E\nabla F(\widetilde x_m)\right\|
 \le n\eta L\sqrt{2V}.
\]

For the curvature term, recall that \(\widetilde x_m=a_m-\eta Z_m\).
The integral Taylor formula gives
\begin{align}
 \nabla F(a_m-\eta Z_m)
 &=\nabla F(a_m)-\eta\nabla^2F(a_m)Z_m+\varepsilon_m,
\label{eq:centered-taylor}\\
 \varepsilon_m
 &:=-\eta\int_0^1
 \left[\nabla^2F(a_m-t\eta Z_m)-\nabla^2F(a_m)\right]Z_m\,dt.
\label{eq:centered-taylor-remainder}
\end{align}
The Lipschitz-Hessian assumption implies
\begin{equation}
 \|\varepsilon_m\|\le\frac{\rho\eta^2}{2}\|Z_m\|^2.
\label{eq:epsilon-bound}
\end{equation}
Conditional on the epoch start, \(a_m\) and \(\nabla^2F(a_m)\) are deterministic, whereas \(\E Z_m=0\).
Therefore
\[
 \E\nabla F(\widetilde x_m)-\nabla F(a_m)=\E\varepsilon_m,
\]
so \(r_{\mathrm{curv}}=\sum_{m=0}^{n-1}\E\varepsilon_m\).
By Jensen's inequality, \eqref{eq:epsilon-bound}, and \eqref{eq:centered-prefix-sum},
\[
 \|r_{\mathrm{curv}}\|
 \le\frac{\rho\eta^2}{2}\sum_{m=0}^{n-1}\E\|Z_m\|^2
 =\frac{\rho\eta^2n(n+1)}{12}V
 \le\frac{\rho\eta^2n^2}{6}V.
\]

Finally, \(a_m=x-\eta m g\) and the \(L\)-smoothness of \(F\) imply
\[
 \|b\|
 \le\sum_{m=0}^{n-1}\|\nabla F(a_m)-\nabla F(x)\|
 \le\eta L\|g\|\sum_{m=0}^{n-1}m
 =\eta L\frac{n(n-1)}2\|g\|.
\]
These three estimates prove the lemma.
\end{proof}

\begin{proof}[\textbf{{Proof of Lemma \ref{lem:lh-one-epoch}}}]
Starting from \eqref{eq:exact-endpoint-expansion}, the same use of \eqref{eq:interpolation} and \eqref{eq:gR-young} gives
\begin{align}
\E\norm{e^+}^2
\le{}&(1-\mu h)\norm e^2
-h\left(\frac1L-2h\right)\norm g^2
+2\eta^2\E\norm R^2
-2\eta\ip e{\E R}.
\label{eq:lh-master}
\end{align}
By \eqref{eq:R-evaluation} and Lemma \ref{lem:mean-A},
\begin{equation}
\E R=b+r_{\mathrm{lab}}+r_{\mathrm{curv}}+\E R^{\mathrm{eval}}.
\label{eq:ER-full-decomp}
\end{equation}
We bound the four terms separately.
First, \eqref{eq:M-bound}, \(n(n-1)\le n^2\), and \eqref{eq:g-L-e} imply
\begin{equation}
2\eta\lvert\ip e b\rvert
\le h^2L^2\norm e^2.
\label{eq:M-cross}
\end{equation}
Second, \eqref{eq:delta-bound} and Young's inequality give
\begin{align}
2\eta\lvert\ip e{r_{\mathrm{lab}}}\rvert
&\le\frac{\mu h}{16}\norm e^2
+32\frac{L^2V}{\mu}n\eta^3.
\label{eq:delta-cross}
\end{align}
Third, \eqref{eq:q-bound} gives
\begin{align}
2\eta\lvert\ip e{r_{\mathrm{curv}}}\rvert
&\le\frac{\mu h}{16}\norm e^2
+\frac{4\rho^2}{9\mu}n^3\eta^5V^2.
\label{eq:q-cross}
\end{align}
Fourth, Jensen's inequality, Young's inequality, and Lemma \ref{lem:B} yield
\begin{align}
2\eta\lvert\ip e{\E R^{\mathrm{eval}}}\rvert
&\le\frac{\mu h}{16}\norm e^2
+\frac{16\eta}{n\mu}\E\norm{R^{\mathrm{eval}}}^2\notag\\
&\le\frac{\mu h}{16}\norm e^2
+\frac{8L^4h^5}{\mu}\norm g^2
+\frac{8L^4}{\mu}n^4\eta^5V.
\label{eq:B-cross}
\end{align}
Finally, \eqref{eq:R-second-bound} gives
\begin{equation}
2\eta^2\E\norm R^2
\le2L^2h^4\norm g^2+2L^2n^3\eta^4V.
\label{eq:lh-R-second-in-recursion}
\end{equation}

Let \(s=hL\) and \(r=\mu/L\).
Under \eqref{eq:stepsize}, the coefficient of \(\norm g^2\), normalized by \(h/L\), is bounded by
\[
-(1-2s)+2s^3+\frac{8s^4}{r}<0.
\]
The positive distance corrections total at most \(7\mu h\norm e^2/32\), leaving the contraction factor no larger than \(1-\mu h/2\).
The remaining term $8L^4n^4\eta^5V/\mu$ in \eqref{eq:B-cross}, which comes from $R^{\mathrm{eval}}$, is at most one quarter of $L^2n^3\eta^4V$ under \eqref{eq:stepsize}.
Together with \eqref{eq:lh-R-second-in-recursion}, it is bounded by the coefficient \(3\) in \eqref{eq:lh-one-epoch}.
\end{proof}

\begin{proof}[\textbf{{Proof of \cref{thm:lh-main}}}]
Condition on each epoch start and apply Lemma \ref{lem:lh-one-epoch}.
By Lemma \ref{lem:confinement}, the conditioning event holds deterministically, and \(V\le G^2\).
Thus
\begin{align}
a_{k+1}
\le{}&\left(1-\frac{\mu h}{2}\right)a_k
+32\frac{L^2G^2}{\mu}n\eta^3
+3L^2G^2n^3\eta^4
+\frac{4\rho^2G^4}{9\mu}n^3\eta^5,
\label{eq:lh-global-recursion}
\end{align}
where \(a_k:=\E\norm{y_k-x_\star}^2\).
Applying the geometric bounds \eqref{eq:geometric-generic}--\eqref{eq:geometric-bounds} proves \eqref{eq:lh-global}.
\end{proof}

\begin{proof}[\textbf{Proof of Corollary \ref{cor:lh-rate}}]
The epoch condition implies \eqref{eq:stepsize}.
Substitution of \eqref{eq:horizon-eta} into \eqref{eq:lh-global} gives \eqref{eq:lh-cor-explicit}; the curvature term is higher order than \(n^2T^{-3}\).
Apply \eqref{eq:function-distance}.
\end{proof}

\section{Proofs for the H\"older and all-epoch extensions}
\label{app:extension-proofs}

\subsection{H\"older curvature and strong convexity}

\begin{proof}[\textbf{{Proof of Lemma \ref{lem:holder-curvature}}}]
For every \(m\), the integral Taylor formula gives
\begin{align}
\nabla F(a_m-\eta Z_m)
={}&\nabla F(a_m)-\eta\nabla^2F(a_m)Z_m+\varepsilon_m^{(\nu)},
\label{eq:holder-taylor}\\
\varepsilon_m^{(\nu)}
={}&-\eta\int_0^1
 \bigl[\nabla^2F(a_m-t\eta Z_m)-\nabla^2F(a_m)\bigr]Z_m\,dt.
\end{align}
Hence
\begin{equation}
 \norm{\varepsilon_m^{(\nu)}}
 \le \frac{\rho_\nu\eta^{1+\nu}}{1+\nu}
       \norm{Z_m}^{1+\nu}.
\label{eq:holder-remainder}
\end{equation}
The point \(a_m\) and its Hessian are deterministic after conditioning on the epoch start, while \(\E Z_m=0\).
Therefore the linear Taylor term cancels.
Let \(p=(1+\nu)/2\in(1/2,1]\).
Lyapunov's inequality and concavity of \(t\mapsto t^p\) imply
\begin{align}
\sum_{m=0}^{n-1}\E\norm{Z_m}^{1+\nu}
&\le \sum_{m=0}^{n-1}
       \left(\E\norm{Z_m}^2\right)^p                                      \notag\\
&\le n^{1-p}
   \left(\sum_{m=0}^{n-1}\E\norm{Z_m}^2\right)^p                           \notag\\
&=n^{1-p}\left(\frac{n(n+1)}6V\right)^p                                   \notag\\
&\le 2^{-(1+\nu)}n^{(3+\nu)/2}V^{(1+\nu)/2},
\label{eq:holder-prefix-moment}
\end{align}
where the last step uses \(n+1\le 3n/2\) for \(n\ge2\).
Summing \eqref{eq:holder-remainder} proves the result.
\end{proof}

\begin{proof}[\textbf{{Proof of \cref{thm:holder-sc}}}]
The bounds for the dependence of the current label and for the difference between the actual and reference evaluation points are unchanged.
In the decomposition of the conditional mean in Lemma \ref{lem:mean-A}, replace the Lipschitz-Hessian curvature residual by the vector in Lemma \ref{lem:holder-curvature}; denote it by \(r_{\mathrm{curv}}^{(\nu)}\).
Thus
\begin{equation}
 \norm{r_{\mathrm{curv}}^{(\nu)}}
 \le c_\nu\rho_\nu\eta^{1+\nu}
      n^{(3+\nu)/2}V^{(1+\nu)/2}.
\label{eq:holder-curvature-residual}
\end{equation}
In the squared-distance endpoint expansion, its cross term satisfies
\begin{align}
2\eta\abs{\ip{e}{r_{\mathrm{curv}}^{(\nu)}}}
&\le \frac{\mu h}{16}\norm e^2
 +\frac{16c_\nu^2\rho_\nu^2}{\mu}
  n^{2+\nu}\eta^{3+2\nu}V^{1+\nu}.
\label{eq:holder-sc-cross}
\end{align}
Indeed, this is Young's inequality with quadratic coefficient \(\mu h/16\) and \(h=n\eta\).
Every other line of the proof of Lemma \ref{lem:lh-one-epoch} is unchanged.
Consequently,
\begin{align}
\E[\norm{x_n-x_\star}^2\mid x]
\le{}&\left(1-\frac{\mu h}{2}\right)\norm e^2
 +32\frac{L^2V}{\mu}n\eta^3
 +3L^2Vn^3\eta^4                                             \notag\\
&+\frac{16c_\nu^2\rho_\nu^2}{\mu}
 n^{2+\nu}\eta^{3+2\nu}V^{1+\nu}.
\label{eq:holder-sc-one-epoch}
\end{align}
Trajectory confinement gives \(V\le G^2\) at every epoch start.
Geometric summation by at most \(2/(\mu n\eta)\) proves \eqref{eq:holder-sc-global}.
\end{proof}

\begin{proof}[\textbf{{Proof of Corollary \ref{cor:holder-sc-rate}}}]
Substitute \(\eta=4\log T/(\mu T)\) into \eqref{eq:holder-sc-global}.
For the domination statement, note that \(T=nK\ge n\) and
\begin{equation}
 \frac{n^{1+\nu}T^{-2-2\nu}}{n^2T^{-3}}
 =n^{\nu-1}T^{1-2\nu}
 \le n^{-\nu}\le1
 \qquad(\nu\ge\tfrac12).
\end{equation}
The lower-bound comparison with Safran and Shamir~\cite{safran2020} follows because a quadratic average has \(\rho_\nu=0\) for every \(\nu\in(0,1]\).
\end{proof}

\subsection{All-epoch proof under convex components}
\label{app:all-k-holder-proof}

The only external ingredient is the per-update estimate in Lemma \ref{lem:ahn-per-iteration}.
Proposition~D.1 of Ahn, Yun, and Sra~\cite{ahn2020} is stated with a uniform component-gradient bound.
Under Assumption~\ref{ass:all-k-bounded}, every RR iterate lies in the compact set \(\mathcal X\), and hence \eqref{eq:all-k-Gbar} supplies that bound.
The swapped permutation used in their coupling proof is itself a possible RR permutation, so its trajectory is also covered by the same assumption.
Thus no global bounded-gradient assumption is needed.

We first record the summation lemma used below.
It is a version of Chung's lemma adapted to a denominator that increases by \(d\) at each step.

\begin{lemma}[A two-scale polynomial recursion]
\label{lem:two-scale-recursion}
Let \(d>0\), \(q_j=q_0+dj\) with \(q_0\ge d\), and suppose
\begin{equation}
 u_j\le \exp\!\left(-\frac{a d}{q_j}\right)u_{j-1}
       +\frac{B d^r}{q_j^p},
 \qquad j=1,\ldots,N,
\label{eq:two-scale-recursion}
\end{equation}
where \(a>p-1>0\).
Then
\begin{equation}
 u_N\le C_a\left(\frac{q_0}{q_N}\right)^a u_0
       +C_{a,p}\frac{B d^{r-1}}{q_N^{p-1}},
\label{eq:two-scale-recursion-bound}
\end{equation}
where \(C_a\) and \(C_{a,p}\) depend only on \(a\) and \(p\).
\end{lemma}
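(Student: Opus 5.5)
We unroll the recursion and compare the products of contraction factors with ratios of the denominators. Iterating \eqref{eq:two-scale-recursion} from \(j=1\) to \(N\) gives
\[
 u_N\le \Pi_{0,N}\,u_0+\sum_{j=1}^N \Pi_{j,N}\,\frac{Bd^r}{q_j^p},
 \qquad
 \Pi_{j,N}:=\exp\!\Bigl(-a d\sum_{\ell=j+1}^N \frac1{q_\ell}\Bigr).
\]
The key estimate is a lower bound on the harmonic-type sum. Since \(t\mapsto 1/t\) is decreasing and \(q_\ell-q_{\ell-1}=d\), we have
\[
 d\sum_{\ell=j+1}^N\frac1{q_\ell}\ge\int_{q_{j+1}}^{q_{N+1}}\frac{dt}{t}=\log\frac{q_{N+1}}{q_{j+1}}.
\]
A cleaner alternative is to compare with \(\log(q_N/q_j)\) directly and pay a constant. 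Because \(q_0\ge d\), every ratio \(q_{j+1}/q_j\le 2\). It follows that
\[
 \Pi_{j,N}\le 2^a\,(q_j/q_N)^a
\]
for all \(0\le j\le N\). With \(j=0\) this gives the first term with \(C_a=2^a\).

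For the source term we then bound
\[
 \sum_{j=1}^N \Pi_{j,N}\frac{Bd^r}{q_j^p}\le 2^a\frac{Bd^r}{q_N^a}\sum_{j=1}^N q_j^{a-p}.
\]
The exponent \(a-p>-1\) by hypothesis. We compare the sum with an integral over \(t\in[q_0,q_N]\) using step \(d\), which gives
\[
 \sum_{j=1}^N q_j^{a-p}\le \frac{C}{d}\int_{q_0}^{q_N+d}t^{a-p}\,dt.
\]
Here monotonicity is used in whichever direction applies: if \(a-p\ge0\) we use the right endpoint and \(q_N+d\le2q_N\); if \(-1<a-p<0\) we use the left endpoint and \(q_0\ge d\) so that \(q_j\ge q_{j-1}\ge q_j/2\). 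Evaluating the integral gives \(\lesssim q_N^{a-p+1}/(d(a-p+1))\), since \(a-p+1>0\). Multiplying out yields \(C_{a,p}Bd^{r-1}q_N^{1-p}\), which is \eqref{eq:two-scale-recursion-bound}.

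The main care point is the handling of constants in the two monotonicity cases, namely \(a\ge p\) versus \(p-1<a<p\). In both cases the ratio-of-consecutive-denominators bound \(q_j/q_{j-1}\le2\) (from \(q_0\ge d\)) absorbs every off-by-one shift into a factor depending only on \(a\) and \(p\). Also note that \(p>1\) is never needed beyond ensuring \(a-p+1>0\).
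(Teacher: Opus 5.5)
Your proposal is correct and is essentially the paper's proof. You unroll the recursion and lower-bound $d\sum_{\ell=j+1}^N 1/q_\ell$ by $\log(q_{N+1}/q_{j+1})$; your integral comparison is equivalent to the paper's use of $\log(1+z)\le z$. You then use $q_{j+1}\le 2q_j$ and $q_{N+1}\ge q_N$ to get $\Pi_{j,N}\le 2^a(q_j/q_N)^a$, and bound $\sum_j q_j^{a-p}$ by an integral. Your split into the cases $a\ge p$ and $p-1<a<p$ spells out what the paper leaves as a one-line comparison.

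One small correction to your closing remark: $p>1$ is not what gives $a-p+1>0$, since that is the hypothesis $a>p-1$. Its real role is to force $a>0$, which you use when turning $d\sum_\ell 1/q_\ell\ge\log(q_{N+1}/q_{j+1})$ into $\Pi_{j,N}\le(q_{j+1}/q_{N+1})^a$.
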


\paragraph{Interpretation.}
An error of size \(q_j^{-p}\) introduced at index \(j\) is subsequently contracted by approximately \((q_j/q_N)^a\).
Summing those contracted errors over a grid with spacing \(d\) loses one power of \(q_N\) and one power of \(d\), which explains the term \(d^{r-1}q_N^{-(p-1)}\).

\begin{proof}[\textbf{Proof of Lemma \ref{lem:two-scale-recursion}}]
Unrolling \eqref{eq:two-scale-recursion} gives
\begin{align*}
 u_N\le{}&u_0\exp\!\left(-a\sum_{j=1}^N\frac d{q_j}\right)\\
 &+Bd^r\sum_{t=1}^N q_t^{-p}
   \exp\!\left(-a\sum_{j=t+1}^N\frac d{q_j}\right).
\end{align*}
Since \(\log(1+z)\le z\) for \(z\ge0\),
\[
 \sum_{j=t+1}^N\frac d{q_j}
 \ge\sum_{j=t+1}^N\log\frac{q_{j+1}}{q_j}
 =\log\frac{q_{N+1}}{q_{t+1}}.
\]
The condition \(q_t\ge q_0\ge d\) gives \(q_{t+1}\le2q_t\), while \(q_{N+1}\ge q_N\).
Therefore
\[
 \exp\!\left(-a\sum_{j=t+1}^N\frac d{q_j}\right)
 \le 2^a\left(\frac{q_t}{q_N}\right)^a.
\]
The same estimate bounds the initial product, up to a constant depending only on \(a\).
Hence the accumulated additive error is at most a constant multiple of
\[
 Bd^r q_N^{-a}\sum_{t=1}^N q_t^{a-p}.
\]
Because \(a-p>-1\), comparison with the corresponding integral yields \(\sum_{t=1}^Nq_t^{a-p}\le C_{a,p}d^{-1}q_N^{a-p+1}\).
Substitution proves \eqref{eq:two-scale-recursion-bound}.
\end{proof}

\begin{proof}[\textbf{Proof of Theorem \ref{thm:all-k-holder}}]
Let \(\Delta_k:=\E\|y_k-x_\star\|^2\).

\paragraph{Step 1: control the first epoch update by update.}
Since \(q_0=\zeta\kappa\), every stepsize in \eqref{eq:all-k-first-eta} is at most \(2/L\).
Applying Lemma~\ref{lem:ahn-per-iteration} with \(\gamma=2\zeta/[\mu(q_0+i)]\) gives
\[
 u_i\le\left(1-\frac{\zeta}{q_0+i}\right)u_{i-1}
       +\frac{A_2}{(q_0+i)^2}+\frac{A_3}{(q_0+i)^3},
\]
where \(u_i\) is the expected squared distance after \(i\) updates and
\[
 A_2:=\frac{12\zeta^2\bar G^2}{\mu^2},
 \qquad
 A_3:=\frac{32\zeta^3\kappa L\bar G^2}{\mu^3}.
\]
Using \(1-z\le e^{-z}\) and applying Lemma~\ref{lem:two-scale-recursion} with \(d=1\), \(a=\zeta\), and separately \(p=2\) and \(p=3\), we obtain
\begin{equation}
 \Delta_1
 \le C\left(\frac{q_0}{q_1}\right)^{\!\zeta}\Delta_0
      +\frac{C A_2}{q_1}+\frac{C A_3}{q_1^2}.
\label{eq:all-k-first-epoch-bound}
\end{equation}

\paragraph{Step 2: derive one recursion for every later epoch.}
Fix \(k\ge1\), and abbreviate \(q=q_{k+1}\), \(\eta=2\zeta/(\mu q)\), and \(h=n\eta\).
Again \(\eta\le2/L\).

If \(h>\mu/(32L^2)\), apply Lemma \ref{lem:ahn-per-iteration} successively to the \(n\) updates in the epoch.
The multiplicative factors on earlier additive errors are at most one, and \((1-\mu\eta/2)^n\le e^{-\mu h/2}\).
Therefore
\begin{equation}
 \Delta_{k+1}
 \le e^{-\mu h/2}\Delta_k
    +3n\eta^2\bar G^2+4n\eta^3\kappa L\bar G^2.
\label{eq:all-k-early-recursion}
\end{equation}
The inequality \(h>\mu/(32L^2)\) implies
\[
 n\eta^2\le\frac{1024L^4}{\mu^2}n^3\eta^4,
\]
so the \(n\eta^2\) term can be included in an \(n^3\eta^4\) term.

If \(h\le\mu/(32L^2)\), the H\"older one-epoch estimate \eqref{eq:holder-sc-one-epoch} applies at the epoch start \(y_k\).
By \eqref{eq:all-k-Gbar},
\[
 V(y_k)
 \le \frac1n\sum_{i=1}^n\|\nabla f_i(y_k)\|^2
 \le \bar G^2.
\]
Using \(1-\mu h/2\le e^{-\mu h/2}\), this stable regime has the same form as the early regime after constants are enlarged.
Consequently, there are constants \(B_3,B_4,B_\nu\), independent of \(n\) and \(k\), such that both regimes satisfy
\begin{align}
 \Delta_{k+1}
 \le \exp\!\left(-\frac{\zeta n}{q_{k+1}}\right)\Delta_k
       +\frac{B_3 n}{q_{k+1}^3}
       +\frac{B_4 n^3}{q_{k+1}^4} +\frac{B_\nu n^{2+\nu}}{q_{k+1}^{3+2\nu}}.
\label{eq:all-k-uniform-recursion}
\end{align}
Here \(\mu h/2=\zeta n/q_{k+1}\), and all powers of \(2\zeta/\mu\) are absorbed into the constants.

\paragraph{Step 3: sum the later epochs and absorb the first-epoch residual.}
For \(K\ge2\), apply Lemma~\ref{lem:two-scale-recursion} to epochs \(1,\ldots,K-1\), with initial denominator \(q_1\), spacing \(d=n\), \(a=\zeta\), and separately
\[
 (p,r)=(3,1),\qquad(4,3),\qquad(3+2\nu,2+\nu).
\]
These choices are admissible because \(\zeta>4\ge2+2\nu\).
We obtain
\begin{align}
 \Delta_K
 \le{}&C\left(\frac{q_1}{q_K}\right)^{\!\zeta}\Delta_1
       +\frac{C}{q_K^2}
       +\frac{Cn^2}{q_K^3}
       +\frac{Cn^{1+\nu}}{q_K^{2+2\nu}}.
\label{eq:all-k-after-first-epoch}
\end{align}
For \(K=1\), the same final bound follows directly from \eqref{eq:all-k-first-epoch-bound} and the estimates below.

Substitute \eqref{eq:all-k-first-epoch-bound}.
The initialization terms combine to \(C(q_0/q_K)^\zeta\Delta_0\).
It remains to absorb the two first-epoch residuals.
If \(n\le q_0\), then \(q_1\le2q_0\) and
\begin{align*}
 \frac{q_1^{\zeta-1}}{q_K^\zeta}
 &\le \frac{2q_0}{q_K^2},
 &
 \frac{q_1^{\zeta-2}}{q_K^\zeta}
 &\le \frac1{q_K^2}.
\end{align*}
If \(n>q_0\), then \(q_1<2n\); since \(\zeta>4\) and \(q_K\ge q_1\),
\begin{align*}
 \frac{q_1^{\zeta-1}}{q_K^\zeta}
 &\le \frac{q_1^2}{q_K^3}
 \le\frac{4n^2}{q_K^3},
 &
 \frac{q_1^{\zeta-2}}{q_K^\zeta}
 &\le \frac{q_1}{q_K^3}
 \le\frac{2n}{q_K^3}
 \le\frac{n^2}{q_K^3}.
\end{align*}
The fixed factor \(q_0\) and the constants \(A_2,A_3\) can be absorbed into \(C_1,C_2\).
Combining these estimates with \eqref{eq:all-k-after-first-epoch} proves \eqref{eq:all-k-holder-bound}.
\end{proof}

\begin{proof}[\textbf{Proof of Corollary \ref{cor:all-k-holder-rate}}]
Because \(q_K=q_0+nK\ge n\), for \(\nu\ge\tfrac12\),
\[
 \frac{n^{1+\nu}q_K^{-2-2\nu}}{n^2q_K^{-3}}
 =n^{\nu-1}q_K^{1-2\nu}
 \le n^{-\nu}\le1.
\]
If \(T=nK\ge q_0\), then \(T\le q_K\le2T\), and \((q_0/q_K)^\zeta\le q_0^2q_K^{-2}\) because \(\zeta>2\).
Thus every term in \eqref{eq:all-k-holder-sharp} has the order displayed in \eqref{eq:all-k-holder-T-rate}.
The objective bound follows from \eqref{eq:function-distance}.
\end{proof}

\section{Proofs for the composite ProxRR results}
\label{app:proximal-proofs}

\subsection{Proximal upper bounds}

\begin{proof}[\textbf{Proof of Lemma \ref{lem:pairwise-interpolation}}]
For \(L>\mu\), apply co-coercivity to the convex, \((L-\mu)\)-smooth function \(H(z)=F(z)-\frac{\mu}{2}\|z\|^2\) at the pair \((u,v)\):
\[
 \langle u-v,\nabla H(u)-\nabla H(v)\rangle
 \ge \frac{1}{L-\mu}\|\nabla H(u)-\nabla H(v)\|^2.
\]
Expanding \(\nabla H\) and rearranging gives the first inequality in \eqref{eq:pairwise-interpolation}; the case \(L=\mu\) follows by continuity.
The second inequality uses \(0<\mu\le L\).
\end{proof}

\begin{proof}[\textbf{{Proof of Lemma \ref{lem:prox-confinement}}}]
The first-exit argument from Lemma \ref{lem:confinement} applies with \(x_\star,D,G\) replaced by \(x^\dagger,D_{\mathrm c},G_{\mathrm c}\); the stronger restriction \eqref{eq:prox-stepsize} gives \(\norm{x_i-x}\le hG_{\mathrm c}\le D_{\mathrm c}/16\) before any proposed exit.

For the endpoint, write
\begin{equation}
 e:=x-x^\dagger,
 \qquad
 \bar g:=\nabla F(x)-g^\dagger,
 \qquad
 R:=\sum_{i=1}^n
 \bigl[\nabla f_{\pi(i)}(x_{i-1})-\nabla f_{\pi(i)}(x)\bigr].
\label{eq:prox-R-definition}
\end{equation}
Then \(x_n=x-h\nabla F(x)-\eta R\).
Comparing the proximal output with the fixed point \eqref{eq:composite-fixed-point} gives
\begin{equation}
 \norm{x^+-x^\dagger}
 \le\norm{e-h\bar g}+\eta\norm R.
\label{eq:prox-confinement-comparison}
\end{equation}
By Lemma \ref{lem:pairwise-interpolation}, the first term is at most \((1-\mu h/2)\norm e\), while the inner bounds give \(\eta\norm R\le2h^2L^2D_{\mathrm c}\).
Since \(\norm e\le D_{\mathrm c}\) and \(2h^2L^2\le\mu h/2\) under \eqref{eq:prox-stepsize}, the right-hand side of \eqref{eq:prox-confinement-comparison} is at most \(D_{\mathrm c}\).
Induction over epochs completes the proof.
\end{proof}

\begin{proof}[\textbf{{Proof of Lemma \ref{lem:prox-one-epoch}}}]
The vector \(R\) is the sum of the differences \(\nabla f_{\pi(i)}(x_{i-1})-\nabla f_{\pi(i)}(x)\).
All estimates for this vector are established before the proximal map is applied, so they remain valid in the composite analysis.
For completeness, conditional on the epoch start, write
\begin{equation}
 \E R=b+r_{\mathrm{lab}}+r_{\mathrm{curv}}^{(\nu)}
       +\E R^{\mathrm{eval}},
\label{eq:prox-mean-decomposition}
\end{equation}
where
\begin{align}
 b&:=\sum_{m=0}^{n-1}
 \bigl[\nabla F(x-\eta mg)-\nabla F(x)\bigr],
\label{eq:prox-b-def}\\
 \|b\|
 &\le\eta L\frac{n(n-1)}2\|g\|,
 &
 \|r_{\mathrm{lab}}\|
 &\le n\eta L\sqrt{2V},
\label{eq:prox-mean-basic-bounds}\\
 \|r_{\mathrm{curv}}^{(\nu)}\|
 &\le c_\nu\rho_\nu\eta^{1+\nu}
       n^{(3+\nu)/2}V^{(1+\nu)/2},
\label{eq:prox-curvature-bound}\\
 \E\|R^{\mathrm{eval}}\|^2
 &\le\frac12L^4\eta^4
       \bigl(n^6\|g\|^2+n^5V\bigr),
 &
 \E\|R\|^2
 &\le L^2\eta^2
       \bigl(n^4\|g\|^2+n^3V\bigr).
\label{eq:prox-path-second-bounds}
\end{align}
Here \(r_{\mathrm{lab}}\) is the error caused by dependence of \(\pi(i)\) on the preceding labels, \(r_{\mathrm{curv}}^{(\nu)}\) is the H\"older Taylor remainder after the centered linear term cancels, and \(R^{\mathrm{eval}}\) is the sum of the differences between evaluating the current component gradient at \(x_{i-1}\) and at \(\widetilde x_{i-1}\).
These are exactly the bounds in Lemma \ref{lem:coupling}, \ref{lem:holder-curvature}, \ref{lem:B} and \ref{lem:path-energy}.
Non-expansiveness and \eqref{eq:composite-fixed-point} yield \(\norm{x^+-x^\dagger}^2\le\norm{e-h\bar g-\eta R}^2\).
After taking conditional expectation, use Lemma \ref{lem:pairwise-interpolation} and \(2h\eta\ip{\bar g}{\E R} \le h^2\norm{\bar g}^2+\eta^2\E\norm R^2\) to obtain
\begin{align}
 \E\norm{x^+-x^\dagger}^2
 \le{}&(1-\mu h)\norm e^2
 -h\left(\frac1L-2h\right)\norm{\bar g}^2
 +2\eta^2\E\norm R^2
 -2\eta\ip e{\E R}.
\label{eq:prox-master}
\end{align}

Insert \eqref{eq:prox-mean-decomposition} and use \(g=\bar g+g^\dagger\).
Cauchy--Schwarz, Jensen's inequality, and Young's inequality give the following four bounds:
\begin{align}
2\eta\abs{\ip eb}
&\le h^2L^2\norm e^2+\frac{\mu h}{32}\norm e^2
     +8\frac{L^2}{\mu}h^3\beta_\star^2,
\label{eq:prox-b-cross}\\
2\eta\abs{\ip e{r_{\mathrm{lab}}}}
&\le\frac{\mu h}{16}\norm e^2
     +32\frac{L^2V}{\mu}n\eta^3,
\label{eq:prox-label-cross}\\
2\eta\abs{\ip e{r_{\mathrm{curv}}^{(\nu)}}}
&\le\frac{\mu h}{16}\norm e^2
     +\frac{16c_\nu^2\rho_\nu^2}{\mu}
       n^{2+\nu}\eta^{3+2\nu}V^{1+\nu},
\label{eq:prox-curvature-cross}\\
2\eta\abs{\ip e{\E R^{\mathrm{eval}}}}
&\le\frac{\mu h}{16}\norm e^2
 +\frac{16L^4h^5}{\mu}\bigl(\norm{\bar g}^2+\beta_\star^2\bigr)
 +\frac{8L^4}{\mu}n^4\eta^5V.
\label{eq:prox-path-cross}
\end{align}
For example, \eqref{eq:prox-curvature-cross} follows by applying \(2ab\le\lambda a^2+b^2/\lambda\) to \eqref{eq:prox-curvature-bound} with \(\lambda=\mu h/16\); the other three lines are identical applications to the corresponding bounds in \eqref{eq:prox-mean-basic-bounds}--\eqref{eq:prox-path-second-bounds}.
The full second moment also satisfies
\begin{equation}
2\eta^2\E\norm R^2
\le4L^2h^4\bigl(\norm{\bar g}^2+\beta_\star^2\bigr)
   +2L^2n^3\eta^4V.
\label{eq:prox-second-moment-in-recursion}
\end{equation}

It remains only to check absorption.
Put \(s=hL\) and \(r=\mu/L\).
Since \(s\le r/64\), the coefficient of \(\norm{\bar g}^2\), after division by \(h/L\), is bounded by
\[
 -(1-2s)+4s^3+\frac{16s^4}{r}<0.
\]
The positive distance corrections are at most \(15\mu h\norm e^2/64\), so they leave the contraction factor no larger than \(1-\mu h/2\).
The remaining variance contribution from \(R^{\mathrm{eval}}\) is at most \(L^2n^3\eta^4V/8\), and the terms containing \(\beta_\star^2\) are at most \(9L^2h^3\beta_\star^2/\mu\).
Substituting these estimates into \eqref{eq:prox-master} proves \eqref{eq:prox-one-epoch}.
\end{proof}

\begin{proof}[\textbf{{Proof of \cref{thm:prox-global}}}]
By Lemma \ref{lem:prox-confinement}, every epoch starts in the radius- \(D_{\mathrm c}\) ball and its centered variance is at most \(G_{\mathrm c}^2\).
Apply Lemma \ref{lem:prox-one-epoch} conditionally on the epoch history and use the tower property.
The resulting affine recursion has contraction \(1-\mu h/2\); summing it geometrically with multiplier at most \(2/(\mu h)\) gives \eqref{eq:prox-global}.
\end{proof}

\begin{proof}[\textbf{{Proof of Corollary \ref{cor:prox-rate}}}]
The epoch condition implies \eqref{eq:prox-stepsize}.
Substitute \eqref{eq:prox-horizon-eta} into \eqref{eq:prox-global}.
For \(\nu\ge1/2\), the H\"older term is dominated by \(n^2T^{-3}\) by the same calculation as in Corollary \ref{cor:holder-sc-rate}.
\end{proof}

\begin{proof}[\textbf{{Proof of Corollary \ref{cor:prox-objective-certificate}}}]
For fixed \(y\), the point \(\widehat y=\operatorname{prox}_{\psi/L}(y-L^{-1}\nabla F(y))\) minimizes
\[
 Q_y(z):=F(y)+\ip{\nabla F(y)}{z-y}
          +\frac L2\norm{z-y}^2+\psi(z).
\]
Smoothness gives \(\mathcal P(\widehat y)\le Q_y(\widehat y)\), model optimality gives \(Q_y(\widehat y)\le Q_y(x^\dagger)\), and convexity of \(F\) bounds the linearization at \(x^\dagger\) by \(F(x^\dagger)\).
This proves \eqref{eq:prox-objective-certificate}.
\end{proof}

\subsection{Proximal-splitting lower bounds}

\begin{proof}[\textbf{{Proof of Lemma \ref{lem:prox-ratio}}}]
Let \(S=\sum_{j=0}^{n-1}(1-a)^j=(1-q)/a\).
Since \(S\le n\), the left-hand side is at least \((n-S)/n\).
If \(na\le1\), the binomial bound \((1-a)^j\le1-ja+\binom j2a^2\) implies \(1-(1-a)^j\ge ja/2\), and summing gives \((n-S)/n\ge na/8\).
If \(na>1\), then for every \(j\ge\lceil n/2\rceil\), \(1-(1-a)^j\ge1-e^{-1/2}\); summing over these indices gives \((n-S)/n>1/8\).
\end{proof}

\begin{proof}[\textbf{{Proof of \cref{thm:prox-lower}}}]
Choose \(c>n\lambda/\mu\) and let every component be
\begin{equation}
 f_i(u,v)=\frac\mu2u^2+\frac\mu2(v-c)^2,
 \qquad
 \psi(u,v)=\lambda\abs v.
\label{eq:prox-lower-instance}
\end{equation}
Then \(x^\dagger=(0,c-\lambda/\mu)\), \(\nabla F(x^\dagger)=(0,-\lambda)\), and all component gradients coincide.
Initialize \(y_0=(1,c-\lambda/\mu)\), and set \(a=\mu\eta\), \(q=(1-a)^n\).
The positive branch of soft thresholding is invariant.
Indeed, if \(v_k\ge c-n\lambda/\mu\), then the next post-proximal coordinate is at least \(c-(n\lambda/\mu)(q+a)\ge c-n\lambda/\mu>0\), because \(q\le1-a\).
The claim holds at initialization and hence by induction.
If \(e_k=v_k-(c-\lambda/\mu)\), the two coordinates therefore satisfy the exact recursions
\begin{equation}
 u_{k+1}=qu_k,
 \qquad
 e_{k+1}=qe_k+\frac\lambda\mu(1-q-na),
 \qquad
 u_0=1,\quad e_0=0.
\label{eq:prox-lower-recursion}
\end{equation}
Hence
\begin{equation}
 u_K=q^K,
 \qquad
 e_K=-\frac\lambda\mu(1-q^K)
       \left(\frac{na}{1-q}-1\right).
\label{eq:prox-lower-solution}
\end{equation}
If \(naK\le1/4\), Bernoulli's inequality gives \(u_K\ge3/4\); this is stronger than \eqref{eq:prox-lower-distance} because \(\lambda\le\mu\) and \(K\ge1\).
If \(naK>1/4\), then \(1-q^K\ge1-e^{-1/4}\).
Moreover, \(\min\{na,1\}\ge1/(4K)\), so Lemma \ref{lem:prox-ratio} gives \(na/(1-q)-1\ge1/(32K)\).
The second coordinate proves \eqref{eq:prox-lower-distance}.
Finally, on the positive branch the linear increment of \(\lambda v\) cancels the linear increment of the shifted quadratic, leaving exactly the quadratic gap in \eqref{eq:prox-lower-objective-equality}.
\end{proof}

\begin{proof}[\textbf{Proof of Corollary \ref{cor:prox-combined-lower}}]
Let \(\{f_i^{\rm rr}\}_{i=1}^n\) be the quadratic finite-sum hard instance from Safran and Shamir~\cite{safran2020}, including the direct-sum coordinates used there to cover the relevant constant-stepsize regimes.
Rescale this block by fixed numerical factors, if necessary, so that its smoothness and strong convexity parameters are compatible with those of the two-dimensional splitting instance in Theorem~\ref{thm:prox-lower}; this changes only the fixed condition and heterogeneity constants suppressed in the statement.
On the product space, define
\[
 \widetilde f_i(z,u,v):=f_i^{\rm rr}(z)
   +\frac{\mu}{2}u^2+\frac{\mu}{2}(v-c)^2,
 \qquad
 \widetilde\psi(z,u,v):=\lambda|v|.
\]
The proximal map acts only on the \(v\)-coordinate.
Moreover, the splitting component is identical for every \(i\), so sharing one permutation between the two blocks leaves its recursion unchanged.
Consequently, for every constant stepsize in the stated range, the \(z\)-coordinates follow exactly the Safran--Shamir random-reshuffling instance, while the \((u,v)\)-coordinates follow exactly the deterministic recursion in \eqref{eq:prox-lower-recursion}.
The product minimizer is the Cartesian product of the two block minimizers.
The gradient of the reshuffling block vanishes at its minimizer, so the product instance still satisfies \(\beta_\star=\lambda\).
Moreover,
\[
 \|y_K-x^\dagger\|^2
 =\|z_K-z_\star\|^2
  +\|(u_K,v_K)-(u_\star,v_\star)\|^2.
\]
The Safran--Shamir lower bound is stated in objective value and has the form
\[
 c_{\rm rr}\min\left\{
 \lambda_{\rm rr},
 \frac{G_{\rm rr}^2}{\lambda_{\rm rr}}
 \left(
 \frac1{(nK)^2}+\frac1{nK^3}
 \right)
 \right\}
\]
for a universal constant \(c_{\rm rr}>0\); see Safran and Shamir~\cite[Theorem~5]{safran2020}.
Since the hard block is a smooth quadratic, smoothness gives
\[
 F_{\rm rr}(z_K)-F_{\rm rr}(z_\star)
 \le \frac{L_{\rm rr}}2\|z_K-z_\star\|^2,
\]
so the same construction also gives the required squared-distance lower bound up to a fixed factor.
The product objective gap decomposes across the two blocks because both the smooth part and the regularizer are block separable.
Under \eqref{eq:prox-large-horizon-regime}, the second argument of the minimum is active.
Adding the reshuffling and splitting lower bounds therefore proves \eqref{eq:prox-combined-lower} for both squared distance and objective gap.
\end{proof}

\bibliographystyle{plain}
\bibliography{references}

\end{document}